\documentclass[a4paper,11pt]{article}

\newcommand{\N}{\mathbb{N}}
\newcommand{\Z}{\mathbb{Z}}
\newcommand{\R}{\mathbb{R}}
\newcommand{\C}{\mathbb{C}}
\newcommand{\Q}{\mathbb{Q}}

\newcommand{\M}{\mathbb{M}}

\newcommand\norm[1]{\left\lVert#1\right\rVert}

\newcommand{\information}{{
  \bigskip
  \footnotesize
    
    \textbf{Jamerson Bezerra}:
    \textsc{
        Faculty of Mathematics and Computer Science, Nicolaus Co\-pernicus University, ul. Chopina 12/18, 87-100 Toruń, Poland.
    } \par\nopagebreak
    \textit{E-mail:} \texttt{jdbezerra@mat.umk.pl}

    \textbf{Sandoel Vieira}:
    \textsc{Universidade Federal do Piau\'i - UFPI, Rua Dirce Oliveira, 64048-550,  Ininga, Teresina, Brazil.} \par\nopagebreak
    \textit{E-mail:} \texttt{sandoel.vieira@ufpi.edu.br}
  
	\textbf{Carlos Gustavo Moreira}:
    \textsc{ Instituto de Matemática Pura e Aplicada - IMPA, Estrada Dona Castorina, 110, 22460-320, Rio de Janeiro, Brazil.} \par\nopagebreak
    \textit{E-mail:} \texttt{gugu@impa.br}
}}



\def\dim{\operatorname{dim}}

\def\diff{\operatorname{Diff}}
\def\per{\operatorname{Per}}

\def\max{\operatorname{max}}
\def\min{\operatorname{min}}

\def\spec{\operatorname{Spec}}

\def\diam{\operatorname{diam}}
\def\per{\operatorname{Per}}

\def\interior{\operatorname{Int}}









\def\tH{{\tilde{H}}}

\def\quand{\quad\text{and}\quad}

\def\GL{GL}

\usepackage[margin=1.2in]{geometry}

\usepackage{amsmath,amsthm,amssymb,amsfonts}
\usepackage{blindtext}
\usepackage{epsfig}
\usepackage{multicol}
\usepackage{xcolor}
\usepackage[hidelinks]{hyperref}
\usepackage{graphicx}
\usepackage{color}
\usepackage{psfrag}
\usepackage{multirow}
\usepackage{enumitem}
\usepackage{mathtools}
\usepackage{dirtytalk}
\usepackage[all]{xy}
\usepackage{xfrac}

\usepackage{caption}
\usepackage{subcaption}

\newtheorem{theorem}{Theorem}

\newtheorem{proposition}[theorem]{Proposition}
\newtheorem{example}[theorem]{Example}

\newtheorem{remark}[]{Remark}

\numberwithin{theorem}{section}

\newtheorem{ltheorem}{Theorem}

\newcommand{\tLambda}{\tilde{\Lambda}}



\begin{document}

\title{Lagrange and Markov spectra for typical smooth systems}
\author{Jamerson Bezerra, Carlos Gustavo Moreira and Sandoel Vieira}

\maketitle

\begin{abstract}
    We prove that among the set of smooth diffeomorphisms there exists a $C^1$-open and dense subset of data such that either the Lagrange spectrum is finite and the dynamics is a Morse-Smale diffeomorphism or the Lagrange spectrum has positive Hausdorff dimension and the system has positive topological entropy.
    
\end{abstract}

\section{Introduction}

Dynamically defined Lagrange and Markov spectra are subsets of the real line that quantifies the asymptotic behavior of the orbits of a given system from the optics of a ``rule" provided by the level curves of a reference real function. More precisely, for a dynamical system $\varphi$ from a metric space $X$ to itself and a continuous function $f:X\to \R$ the \emph{Dynamically defined Lagrange and Markov spectra} associated to the \emph{data} $(\varphi,\, f)$ is given by the sets
\begin{align*}
    L(\varphi,\, f) = \left\{
        \limsup_{j\to\infty}\, f(\varphi^j(x))\colon\,
        x\in X
    \right\}
    \quad
    \text{and}
    \quad
    M(\varphi,\, f) = \left\{
        \sup_{j\in \Z}\, f(\varphi^j(x))\colon\,
        x\in X
    \right\}.
\end{align*}
In geometric terms, the Lagrange spectrum can be seen as the set of values (or heights) for which the respective level curves are asymptotically accumulated by orbits of the phase space. Similarly, the Markov spectra gathers the biggest heights from different orbits of the system.

A special case stands out in the number theoretical context in which the data $(\varphi,\, f)$ in consideration is given by the shift map $\varphi:\N^{\Z} \to \N^{\Z}$ and the real function $f:\N^{\Z}\to \R$ defined by $f((\theta)_{i\in\Z}) = [\theta_0; \theta_1, \theta_2, \ldots]+[0; \theta_{-1}, \theta_{-2}, \ldots]$
\footnote{
    The expression $t = [\theta_0; \theta_1,\ldots, ]$ represents the expansion of the real number $t$ in continuous fraction.
}.
The dynamically defined Lagrange and Markov spectra associated to this data coincides with the (classical) Lagrange and Markov spectra $M$ and $L$. These classical spectra originally appear from the analysis of Diophantine properties of real numbers and the study of its structure goes back to the nineteen century (see \cite{MA79}, \cite{MA80}) and since then many contributions have been made to the topic.

It is known that $M\cap (-\infty,3) = L\cap (-\infty,3)$ is a discrete set with $3$ as the sole accumulation point (see \cite{MA79, MA80}). Moreover, there exists an (optimal) constant $c_F>3$ with the property that $[c_F,\infty)\subset L\subset M$ (see \cite{FR75,H49}). The fractal properties of these spectra between $3$ and $c_F$ is theme of active research from the last 60 years. Recently, Moreira, in \cite{M02018}, showed that the sets $L\cap (-\infty, t)$ and $M\cap (-\infty, t)$ cannot be differentiated using the Hausdorff dimension. Furthermore, at any small interval around $3$ these spectra have positive Hausdorff dimension. This last result was improved in \cite{HaMoGuRo2022} where the precise modulus of continuity of the function $\varepsilon\mapsto \dim_H(L\cap (3,\, 3+\varepsilon)) = \dim_H(M\cap (3,\, 3 + \varepsilon))$ was analyzed.

The analysis of the set of points that belongs to the classical Markov but are not in the Lagrange spectrum has a central place in the theory. Indeed, $M\backslash L$ has a complex fractal structure and the precise estimation of its Hausdorff dimension is still far from being established (see \cite{BoLMMR} for a comprehensive discussion on the structure of $M\backslash L$). Regarding the study of the set of real numbers possessing the same Lagrange value or equivalently the study of the ``level curves of the classical Lagrange spectrum", Moreira and Villamil, in \cite{MoVi2023}, proved that for each given value $t$ in the interior of $L$ the Hausdorff dimension of the real numbers whose Lagrange value belongs to $(-\infty, t)$ coincides with the Hausdorff dimension of the level set associated to $t$.

Different characterizations of the classical spectra appear in the literature aiming to introduce new ideas and techniques into the subject. For instance, it is known that the Lagrange spectra can be realized using the geodesic flow on the Modular surface (see \cite{Ar1994}). The dynamical characterization presented in this work is due to Perron \cite{PERRON} and it provides a different view for many of the classical arguments in the theory. Furthermore, it allows us to explore the properties of these sets for more general dynamical contexts such as the one of smooth dynamics. In this work we address following question:


\begin{center}
    \it
    For typical smooth data, is the complexity of the dynamically defined Lagrange and Markov spectra enough indication of the complexity of the system? 
\end{center}

That is indeed the case when dealing with the Lagrange spectrum which typically contains enough information to capture the complexity of the system in analysis as described by the following dichotomy.
 
\begin{ltheorem}
\label{thm:dichotomy}
    Let $M$ be a compact manifold. Then, there exists a $C^1$-open and $C^1$-dense subset $\mathcal{R}(M)\subset \diff^{\infty}(M)\times C^1(M;\R)$ such that if $(\varphi,\, f)\in \mathcal{R}(M)$, then either
    \begin{enumerate}
        \item $L(\varphi,\, f)$ is finite and $\varphi$ is a Morse-Smale 
        \footnote{
            A diffeomorphism is \emph{Morse-Smale} if the chain-recurrent set is hyperbolic and finite.
        }
        diffeomorphism or;
        
        \item $L(\varphi,\, f)$ has positive Hausdorff dimension and $h_{\text{top}}(\varphi)>0$.
    \end{enumerate}
\end{ltheorem}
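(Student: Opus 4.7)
My plan is to obtain the dichotomy by combining a $C^1$-generic structural dichotomy for the diffeomorphism $\varphi$ with a $C^1$-generic non-degeneracy condition on the height function $f$ over hyperbolic pieces. First I would invoke the following structural fact, available in the $C^1$-generic world: there is a $C^1$-open and dense subset $\cR_0 \subset \diff^\infty(M)$ such that every $\varphi \in \cR_0$ either is Morse-Smale or admits a nontrivial transitive hyperbolic basic set $\Lambda$ (a horseshoe). Morse-Smale is $C^1$-open by classical structural-stability results, and the presence of a fixed horseshoe is $C^1$-open by structural stability of basic sets; the dichotomy ``Morse-Smale or horseshoe'' is $C^1$-dense by Pujals-Sambarino in dimension two and by Bonatti-Crovisier-Pujals style arguments in higher dimensions (generic connecting lemmas combined with the fact that positive-entropy chain-recurrence classes carry horseshoes).

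If $\varphi \in \cR_0$ is Morse-Smale, the chain-recurrent set decomposes as a finite disjoint union $\cO_1 \cup \cdots \cup \cO_N$ of hyperbolic periodic orbits, and for every $x \in M$ the $\omega$-limit $\omega(x)$ coincides with some single orbit $\cO_{i(x)}$. Since each $\cO_i$ is finite,
\begin{equation*}
    \limsup_{j \to \infty} f(\varphi^j(x)) = \max_{p \in \cO_{i(x)}} f(p),
\end{equation*}
hence $L(\varphi, f) \subseteq \{\max_{p \in \cO_i} f(p) : 1 \leq i \leq N\}$ is finite for every $f \in C^1(M;\R)$. Thus no perturbation of $f$ is necessary, and the entire slice $\{\varphi\} \times C^1(M; \R)$ contributes to $\cR(M)$ in this regime; this branch is automatically $C^1$-open by openness of Morse-Smale.

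The substantive half of the proof is the horseshoe case. If $\varphi$ carries a horseshoe $\Lambda$, then immediately $h_{\text{top}}(\varphi) \geq h_{\text{top}}(\varphi|_\Lambda) > 0$, and it remains to exhibit a $C^1$-open and dense set of $f$'s for which $\dim_H L(\varphi, f) > 0$. Here I would mimic the dynamical Markov/Lagrange spectrum strategy for horseshoes developed by Moreira and collaborators: after a $C^1$-small perturbation, $f|_\Lambda$ attains its global maximum at a unique point $p_0 \in \Lambda$ and is non-degenerate along the local stable/unstable foliations at $p_0$, in the sense that the projections of $f$ along each foliation restrict to $C^1$ functions on the local stable Cantor set $K^s$ and unstable Cantor set $K^u$ of $\Lambda$ with nonzero derivative at the coordinates of $p_0$. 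Under this non-degeneracy, a Perron-type dynamical description identifies $L(\varphi|_\Lambda, f|_\Lambda)$, near the maximum value, with the image of a small piece of $K^u \times K^s$ under a $C^1$-map with non-degenerate Jacobian, giving $\dim_H L(\varphi, f) \geq c\bigl(\dim_H K^u + \dim_H K^s\bigr)$ for some $c > 0$. If the initially produced horseshoe has too small dimension, I would further perturb $\varphi$ inside $\cR_0$ to thicken $K^u, K^s$ via a Newhouse-Moreira type argument on $C^1$-stable intersections of regular Cantor sets, guaranteeing strict positivity of the projected dimension. Both the non-degeneracy of $f|_\Lambda$ and the positivity of the projected dimension are $C^1$-open on $(\varphi, f)$, and density follows from the perturbative construction just sketched.

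The main obstacle, in my view, is precisely the positive-dimension step in case (2): translating Perron's symbolic description of $\limsup f(\varphi^j(x))$ into a fractal-geometric statement about the image of $K^u \times K^s$ requires careful bookkeeping on both the future and past coding, and proving positive dimension forces one to verify that a generic $f|_\Lambda$ does not degenerate along either foliation \emph{and} that the Cantor sets themselves have been perturbed to have sufficient joint thickness. A subsidiary technical point is maintaining these conditions simultaneously with the horseshoe persistence when one perturbs $\varphi$. Once these analytic steps are in place, one sets $\cR(M)$ equal to the union of the two $C^1$-open sets constructed above, which is $C^1$-open in $\diff^\infty(M) \times C^1(M; \R)$ by construction and $C^1$-dense by the generic dichotomy on the $\varphi$-factor combined with density of the perturbations on the $f$-factor.
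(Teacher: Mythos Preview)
Your global strategy---split along the $C^1$-generic dichotomy Morse--Smale versus ``has a horseshoe'' (Crovisier), then handle each branch separately---is exactly the paper's. The Morse--Smale branch is correct as you wrote it. The horseshoe branch, however, has two genuine gaps.

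\textbf{Gap 1: Lagrange is not Markov.} Your key claim, that ``a Perron-type dynamical description identifies $L(\varphi|_\Lambda, f|_\Lambda)$, near the maximum value, with the image of a small piece of $K^u \times K^s$ under a $C^1$-map,'' is not justified and is in fact the main analytic content of the paper. Perron's description immediately gives such a picture for the \emph{Markov} spectrum (a $\sup$), but for the \emph{Lagrange} spectrum (a $\limsup$) you must actually exhibit, for each target value, an orbit whose $\limsup$ equals that value. The paper achieves this by a nontrivial symbolic construction (Section~3): one fixes a cylinder $C_{-n}(a)$ on which $f$ strictly dominates its values elsewhere, builds an auxiliary map $\tilde H$ that repeatedly inserts the word $a$ inside growing blocks of a sequence $\theta$ belonging to a subshift $\Sigma_{1/3}(a^c)$ that \emph{avoids} long subwords of $a$, and then uses Baire's theorem to extract an open set $\mathcal{O}\subset \Sigma_{1/3}(a^c)$ and an integer $j_0$ with $f\circ\varphi^{j_0}\circ\hat H(\mathcal{O})\subset L(\Lambda,f)$. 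Nothing in your proposal corresponds to this step, and without it you have only produced candidates for the Markov spectrum. A related symptom: your formula $\dim_H L \geq c\,(\dim_H K^u + \dim_H K^s)$ has no basis; the paper's lower bound is $\dim_H L \geq \dim_H K_p(\varphi)$ for a single one-dimensional regular Cantor set $K_p(\varphi)$ sitting in the weak-stable direction.

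\textbf{Gap 2: regularity in dimension $\geq 3$.} You assume that the projections of $f$ along the stable/unstable foliations are $C^1$ on regular Cantor sets $K^s, K^u$. In dimension two this is classical, but for $\dim M \geq 3$ the unstable holonomies of a generic horseshoe are merely H\"older (indeed there are examples where they are not even Lipschitz), and the stable slice need not be a regular Cantor set. The paper circumvents this by perturbing $\varphi$ so that the saddle $p$ satisfies spectral conditions C1--C4, applying Sell's linearization, and then invoking the Palis--Viana construction to build a specific horseshoe with a dominated splitting $E^s = E^w \oplus E^{ss}$, $\dim E^w = 1$, intrinsically $C^{1+\alpha}$ unstable holonomies that preserve $E^w$, and a genuine regular Cantor set $K^w_l$ in the $E^w$-direction. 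Your sketch contains none of this machinery. Finally, your proposed ``thickening via Newhouse--Moreira'' is unnecessary: every regular Cantor set already has strictly positive Hausdorff dimension, so once $K_p(\varphi)$ is constructed the bound $\dim_H L \geq \dim_H K_p(\varphi) > 0$ is automatic.
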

Therefore, at least generically, if the Hausdorff dimension of the Lagrange spectrum is positive (complexity of the spectrum) we should expect our system to have positive entropy (complexity of the system). Such a result is not possible for the Markov spectrum though. Indeed, in any compact manifold, it is possible to build examples of open sets of smooth data where the dynamics is very predictable, nevertheless and the Markov spectrum contains intervals.

\begin{ltheorem}
\label{thm:bigMarkov}
    Let $M$ be a compact manifold. Then, there exists an open subset $\mathcal{U}\subset \diff^{\infty}(M)\times C^1(M;\R)$ such that for every $(\varphi, f) \in \mathcal{U}$, we have that $L(\varphi,\, f)$ is finite and $M(\varphi,\, f)$ has non-empty interior.
\end{ltheorem}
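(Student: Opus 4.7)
The plan is to exploit the gap between $\limsup$ and $\sup$: I will pick a Morse-Smale diffeomorphism $\varphi_0$ (which makes every $\omega$-limit set a periodic orbit and hence forces $L(\varphi_0, f)$ to be the finite set of $f$-maxima over the finitely many periodic orbits) and then arrange a single ``bump'' of $f_0$ on the wandering set that is visited at most once by each orbit. The supremum of $f_0$ along a typical orbit will then be pinned down by that unique visit, and as the base point varies the height of the visit sweeps out a whole interval of values in the bump, producing an interval inside $M(\varphi_0, f_0)$ without affecting the (finite) Lagrange spectrum.

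Concretely, I would take any Morse-Smale $\varphi_0 \in \diff^{\infty}(M)$ (for instance the time-$1$ map of the gradient flow of a Morse function) and fix a wandering point $z_0 \in M$. Since the chain-recurrent set of $\varphi_0$ consists of finitely many periodic orbits, the $\varphi_0$-orbit of $z_0$ is a proper discrete set not accumulating at $z_0$; thus one can choose a small open neighborhood $U \ni z_0$ with $\varphi_0^j(U) \cap U = \emptyset$ for every $j \neq 0$. Next I would select $f_0 \in C^1(M;\R)$ vanishing on $M \setminus U$ and realizing a smooth bump on $U$ with $f_0(U) = [0,1]$. The periodic orbits lie entirely in $M \setminus U$, so $L(\varphi_0, f_0) = \{0\}$; on the other hand, for each $t \in [0,1]$, any $y_t \in U$ with $f_0(y_t) = t$ satisfies $\sup_{j \in \Z} f_0(\varphi_0^j(y_t)) = t$ by the disjointness of the iterates of $U$, whence $[0,1] \subset M(\varphi_0, f_0)$.

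For openness, I would rely on the $C^1$-stability of the Morse-Smale property: every small $C^1$-perturbation $\tilde\varphi$ stays Morse-Smale with the same number of periodic orbits, whose locations and basins vary continuously with $\tilde\varphi$, so $L(\tilde\varphi, \tilde f)$ is still a finite set of periodic-orbit maxima. The crucial point is that the disjointness $\tilde\varphi^j(U) \cap U = \emptyset$ for all $j \neq 0$ survives joint $C^1$-perturbation of $\varphi_0$ and $C^0$-perturbation of $f_0$; once this is secured and a slightly smaller open set $V \Subset U$ is chosen so that $\tilde f|_V$ still realizes a nondegenerate interval dominating $\sup_{M \setminus U} \tilde f$, the same selection argument produces an interval inside $M(\tilde\varphi, \tilde f)$.

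The main technical obstacle is precisely this uniform disjointness: one must rule out the appearance of a new intersection $\tilde\varphi^j(U) \cap U \neq \emptyset$ for arbitrarily large $|j|$ after perturbation. I would handle it by combining the continuity of finitely many iterates (which covers all moderate values of $|j|$) with a $C^1$-robust filtration adapted to the Morse-Smale decomposition that forces $\tilde\varphi^j(U)$ to enter prescribed small neighborhoods of the attractors, respectively repellers, of $\tilde\varphi$ once $|j|$ exceeds a uniform bound, and hence to stay away from $U$.
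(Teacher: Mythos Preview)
Your proposal is correct and follows exactly the paper's strategy: a bump supported on a wandering domain of a Morse--Smale diffeomorphism, so that the $\limsup$ sees only periodic values while the $\sup$ sweeps out an interval. The one place where the paper is more economical is the robustness step you flag as the ``main technical obstacle.'' Rather than appealing to a general filtration to control large $|j|$, the paper places the bump inside a fundamental domain $U\setminus\varphi_0(U)$ of the basin of a fixed sink $S_0$, where $U$ is a compact trapping neighborhood with $\varphi_0(U)\subset\operatorname{int} U$. Forward iterates of points in the bump's support $W\subset U\setminus\varphi_0(U)$ then lie in $\varphi_0(U)$ (disjoint from $W$) forever, and backward iterates leave $U$ (hence leave $W$) forever; both facts rest on the single inclusion $\varphi(U)\subset\operatorname{int} U$, which is manifestly $C^1$-open. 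This collapses your two-stage ``finite $|j|$ by continuity, large $|j|$ by filtration'' argument into a one-iterate condition, but the underlying mechanism is the same trapping-region idea you describe.
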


Hidden is the statement of Theorem \ref{thm:dichotomy} lies the fact that a typical diffeomorphism in the $C^1$-topology is either a Morse-Smale diffeomorphism or admits a horseshoe (see \cite{Cr2010}).  So, in this work we analyze Lagrange spectrum for typical data in which the systems admits a horseshoe or equivalently a transversal homoclinic intersection associated to a hyperbolic periodic point of saddle-type.

For diffeomorphisms admitting a horseshoe most of the results in the literature about the dynamically defined spectra relies not only on the symbolic representation of the dynamics but also the geometric properties of the invariant set itself. That is the case, for instance, when the horseshoe lies in a two dimensional environment. In fact, geometric properties such as regularity of the invariant distributions, regularity of the holonomy maps and the dependency of these objects with respect to the diffeomorphism on surfaces have been explored since the 60's (see \cite{PaTa1995} for a comprehensive description of those geometric properties in the surface case).

Exploring these properties Moreira and Roman\~a in \cite{IBMO2017} proved that both spectra have non-empty interior for typical diffeomorphisms with a ``thick" surface horseshoe $\Lambda$ ($\dim_H(\Lambda) > 1$) for typical real function. Dealing with ``thin" surface horseshoes $\Lambda$ instead ($\dim_H(\Lambda) < 1$), Cerqueira, Matheus and Moreira in \cite{MMC} approached the problem of continuity of the maps $t\mapsto \dim_H(L(\varphi|_{\Lambda}, f)\cap (-\infty,\, t))$ and $t\mapsto \dim_H(M(\varphi|_{\Lambda}, f)\cap (-\infty,\, t))$. They guaranteed that continuity holds for typical smooth systems $\varphi$ and typical real function $f$. If the systems in analysis preserve area, they obtained, additionally, that  typically $\dim_H(L(\varphi|_{\Lambda}, f)\cap (-\infty,\, t)) = \dim_H(M(\varphi|_{\Lambda}, f)\cap (-\infty,\, t))$ (see also \cite{CeMoRo2022} for a similar result in the context of geodesic flow on negatively curved surfaces). The thin assumption on the horseshoe was later removed by Lima, Moreira and Villamil in \cite{LiMoVi2023}.

Since we can recover the classical Lagrange and Markov spectra from this smooth setting (\cite{Ar1994} and \cite{LiMo2022}), the study of the dynamically defined spectra for typical smooth system can provide a way to infer properties for the classical $M$ and $L$ that we can see for typical data in the smooth counterpart. One example of that is provided by the phase transition property of the dynamically defined Markov Lagrange spectra for typical conservative dynamics of horseshoes obtained by Lima and Moreira in \cite{LiMo2021} in which there is a threshold  $t^*$ where the portion of both spectra inside of the interval $(-\infty,\, t - \delta)$, for any $\delta>0$, has Hausdorff dimension smaller than one, however, just after $t^*$, we can already see non-empty interior.

Nothing much is known once we leave the surface setting. One of the main reasons is that for typical horseshoes in higher dimensions we no longer have the nice geometrical properties observed in the two dimensional case. Nevertheless, it is possible to perform local constructions to design, after perturbation, such hyperbolic sets presenting the desired geometric features. This is the type of technique used by Palis and Viana in \cite{PaVi1994} to investigate the abundance of diffeomorphism displaying infinitely many coexistent sinks for dissipative systems.

Combining the construction in \cite{PaVi1994} with an adaptation of the techniques developed in \cite{IBMO2017}, in this work we prove the following result.
\begin{ltheorem}
\label{thm:Main}
    Let $M$ be a compact manifold of dimension $d\geq 2$. Let $\varphi\in \diff^{\infty}(M)$, admitting a transversal homoclinic intersection. Then, there exist $\varphi'\in \diff^{\infty}(M)$, $C^{\infty}$-close to $\varphi$ and a $C^1$-open neighbourhood of $\varphi'$, $\mathcal{U}(\varphi')\subset \diff^{\infty}(M)$ such that for every $\tilde{\varphi}\in \mathcal{U}(\varphi')$ there exists an open and dense set, $\mathcal{X}_{\tilde{\varphi}}\subset C^1(M;\mathbb{R})$, of real functions such that if $f\in \mathcal{X}_{\tilde{\varphi}}$, then both $M(\Lambda_{\tilde{\varphi}},\, f)$ and $L(\Lambda_{\tilde{\varphi}},\, \varphi)$ have positive Hausdorff dimension.
\end{ltheorem}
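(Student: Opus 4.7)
The plan is to combine the Palis--Viana local construction from \cite{PaVi1994} with the projection techniques of Moreira--Roma\~na \cite{IBMO2017} in order to install, after a $C^\infty$-small perturbation of $\varphi$, a subhorseshoe whose geometry is effectively two-dimensional and therefore susceptible to a surface-type analysis of the spectra. Once such a subhorseshoe is in place and shown to be $C^1$-persistent, one obtains $\mathcal{U}(\varphi')$ from its robustness neighborhood and derives positive Hausdorff dimension of both spectra through a generic function via a Cantor set projection argument.

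\textbf{Construction of the adapted subhorseshoe.} Smale--Birkhoff associates to the given transversal homoclinic intersection a basic hyperbolic set $\Lambda_0$. Pick a periodic point $p\in\Lambda_0$, linearize $\varphi$ on neighborhoods of the orbit of $p$ via a preliminary $C^\infty$-small perturbation, and straighten the local stable and unstable manifolds. Using the Palis--Viana technology, one then opens, by a further localized perturbation supported away from $\Lambda_0$, new transversal homoclinic orbits of $p$ arranged so that the resulting basic set contains a subhorseshoe $\Lambda$ supported on a $2$-dimensional $\varphi'$-invariant disk $D$ through $p$. On $D$ the restriction of $\varphi'$ realizes a planar horseshoe with local product structure $K^s\times K^u$, where $K^s$ and $K^u$ are regular Cantor sets with $\dim_H K^s + \dim_H K^u > 0$ and prescribed regularity; the invariance of $D$, the dominated splitting transverse to it, and the hyperbolicity of $\Lambda$ all persist under $C^1$-small perturbations, so that $\mathcal{U}(\varphi')$ may be taken to be this persistence neighborhood and $\Lambda_{\tilde\varphi}$ denotes the hyperbolic continuation of $\Lambda$.

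\textbf{Positive Hausdorff dimension of the spectra.} Given $\tilde\varphi\in\mathcal{U}(\varphi')$, define $\mathcal{X}_{\tilde\varphi}\subset C^1(M;\R)$ as the set of $f$ such that $f|_{\Lambda_{\tilde\varphi}}$ attains a strict maximum at an interior point $q\in\Lambda_{\tilde\varphi}$, the differential $df_q$ is transverse to the stable/unstable directions of $\Lambda_{\tilde\varphi}$ at $q$, and $f$ satisfies the non-degeneracy condition used in \cite{IBMO2017}. Openness and density of $\mathcal{X}_{\tilde\varphi}$ in the $C^1$-topology follow from standard transversality; for such $f$, the Markov value $\sup_{j\in\Z} f(\tilde\varphi^j(x))$ of any orbit that eventually enters a small Markov box around $q$ can be expressed, up to a Lipschitz perturbation, as the image under a smooth submersion of the $s$- and $u$-coordinates of the closest approach of the orbit to $q$. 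These coordinates, ranging over all orbits in $\Lambda_{\tilde\varphi}$, trace out a Cantor set bi-Lipschitz equivalent to $K^s\times K^u$, so $M(\Lambda_{\tilde\varphi},f)$ inherits Hausdorff dimension at least $\dim_H(K^s\times K^u)>0$. The same estimate applied to orbits with infinitely many returns to the box yields positive Hausdorff dimension of $L(\Lambda_{\tilde\varphi},f)$.

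\textbf{Main obstacle.} The hard part is the middle step. In dimension $d\geq 3$ a generic basic set has Hölder-only invariant foliations, no invariant $2$-disks, and projections along its directions that are irregular enough to block the sum-of-Cantor-sets description underlying \cite{IBMO2017}. The Palis--Viana construction fabricates, by a carefully chosen perturbation near a homoclinic point, precisely the $2$-dimensional invariant substructure with regular holonomies that is needed; the technical heart of the proof will be to execute this construction inside the present setting and to verify that the resulting product-Cantor geometry survives under $C^1$-perturbations in a way that makes the non-degeneracy conditions on $f$ simultaneously open and dense.
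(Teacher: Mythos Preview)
Your proposal has a genuine gap in the ``adapted subhorseshoe'' step. The Palis--Viana construction from \cite{PaVi1994} does \emph{not} produce a $\varphi'$-invariant $2$-disk $D$ containing a planar horseshoe, and no such disk exists or persists generically in dimension $d\geq 3$: a $2$-dimensional invariant submanifold would require normal hyperbolicity, which is incompatible with the full hyperbolic splitting of a horseshoe whose stable and unstable bundles each have dimension possibly larger than one. What \cite{PaVi1994} actually yields, after imposing the open conditions (real simple spectrum at $p$, homoclinic point off the strong stable manifold, a cone-field transversality), is a horseshoe $\Lambda$ whose stable bundle carries a dominated splitting $E^s=E^w\oplus E^{ss}$ with $\dim E^w=1$, and whose unstable holonomies are \emph{intrinsically} $C^{1+\alpha}$ and preserve $E^w$. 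There is only one regular Cantor set in play, obtained by projecting $W^s_{\mathrm{loc}}(p)\cap\Lambda$ onto the weak-stable coordinate; the product $K^s\times K^u$ you invoke is not available, and your generic condition ``$df_q$ transverse to the stable/unstable directions'' is accordingly replaced by the single condition $\nabla f\cdot e^w\neq 0$ on the maximizing box.

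Your Lagrange argument is also too vague to work. Saying ``the same estimate applied to orbits with infinitely many returns'' does not produce a set of positive Hausdorff dimension inside $L$: one needs to manufacture, for each point of an open set in a subhorseshoe, a specific orbit whose $\limsup$ is \emph{exactly} the value of $f$ at a controlled point. The paper does this symbolically: given the box $P(a)$ where $f$ dominates, one builds an explicit map $\tilde H$ that interleaves growing pieces of a sequence $\theta\in\Sigma_{1/3}(a^c)$ with copies of $a$, checks that the $\limsup$ along $\tilde H(\theta)$ is realized at one of finitely many shifts of $H_a(\theta)$, and then invokes Baire to get an open set $\mathcal{O}$ and a fixed shift $j_0$ with $g\circ\sigma^{j_0}\circ H_a(\mathcal{O})\subset L$. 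Because $H_a$ is a holonomy composed with a shift, it is intrinsically $C^{1+\alpha}$ and sends $E^w$ to $E^w$; combined with $\nabla f\cdot e^w\neq 0$, the restriction of $f\circ\varphi^{j_0}\circ\hat H$ to the weak-stable Cantor set is bi-Lipschitz onto its image, giving $\dim_H L\geq\dim_H K^w>0$. You should replace the invariant-disk picture with this weak-stable/intrinsic-derivative mechanism and make the Lagrange inclusion explicit via the symbolic construction.
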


In the Section \ref{Pre}, we introduce the notation and establish the preliminary results that will be used a throughout the work. In Section \ref{section:lagrangeSymbolic}, we analyze the dynamically defined Lagrange spectrum associated to subshifts of finite type. The choice of the generic set of pairs that will be used in the proof of Theorem \ref{thm:Main} is provided in Section \ref{section:choiceData}. Section \ref{section:proofResults} contains the proof of the theorems \ref{thm:dichotomy}, \ref{thm:bigMarkov} and \ref{thm:Main}.

\paragraph{Acknowledgements:} Research was partially supported by the Narodowe Centrum Nauki Grant 2022/45/B/ST1/00179, by the Center of Excellence ``Dynamics, Mathematical Analysis and Artificial Intelligence" at Nicolaus Copernicus University in Toruń and by FCT-Funda\c{c}\~{a}o para a Ci\^{e}ncia e a Tecnologia through the project  PTDC/MAT-PUR/29126/2017. We would like to thank Davi Lima and Sergio Romaña for their careful reading and helpful suggestions in the early stages of the manuscript.
\section{Preliminaries}
\label{Pre}

In this section we introduce the basic tools that will be used in the work.

\subsection{Dynamically defined Markov and Lagrange spectra}

Let $X$ be a metric space and $\varphi:X\to X$ be a homeomorphism. Let $\Lambda$ be a $\varphi$-invariant compact subset of $X$ and $f:X\to \R$ a continuous function. The  \emph{dynamically defined Lagrange and Markov spectra} over $\Lambda$ is given respectively by the sets $L(\Lambda,\, f):= L(\varphi|_{\Lambda},\, f)$ and $M(\Lambda,\, f) := M(\varphi|_{\Lambda},\, f)$.

It is not hard to see that $L(\Lambda,\, f)\subset M(\Lambda,\, f)\subset f(\Lambda)$. Notice that we cannot expect in general that these spectra capture a good dynamical behaviour of our system. Indeed, we could always consider $f$ a constant function and in this case the spectra is trivial.  But, triviality of these spectra also occur for a big class of systems independently of the chosen real function. That is the case when the limit set of the dynamics is finite and so the Markov and Lagrange are finite and coincide. This is exactly the case for Morse-Smale diffeomorphisms.

Nevertheless, in many situations these spectra can have a very complicated fractal structure. An important example in the theory appears naturally from number theory, more specifically from the theory of Diophantine approximations and quadratic forms as follows: given a positive real number $\alpha$ we define its \textit{best constant of Diophantine approximation} to be
\begin{align*}
k(\alpha) = \sup \left\{
        k>0\colon\,
        \left|\alpha-\dfrac{p}{q}\right|<\dfrac{1}{kq^2} \text{ has infinitely many solutions } \dfrac{p}{q} \in \Q
    \right\}.
\end{align*}
The \emph{classical Lagrange spectrum}, denoted by $L$, is the collection of the quantities $k(\alpha)$ which are finite. The \emph{classical Markov spectrum} is defined as the set
\begin{align*}
    M = \left\{
        \inf_{(x,y) \in \Z^2 \setminus (0,0)} |f(x,y)|^{-1}\colon\,
        f(x,y)=ax^2+bxy+cy^2  \text{, with } b^2-4ac=1
    \right\}.
\end{align*}
The link between the classical notions and the dynamical setting is provided by the following characterization: let $\sigma: \N^{\Z} \to \N^{\Z}$ be the shift map and $f: \Sigma \to \R$ be a continuous function given by $f(\theta)=[\theta_0; \theta_1, \theta_2, \ldots]+[0; \theta_{-1}, \theta_{-2}, \ldots]$, $\theta = (\theta_n)_{n\in \Z}$. Then we have,
\begin{align*}
    L = L(\sigma,\, f)
    \quad
    \text{and}
    \quad
    M = M(\sigma,\, f).
\end{align*}
It is also possible to recover the classical Markov and Lagrange spectrum from a smooth setting. Indeed, let $\psi:(0,1)^2 \to (0,1)^2$ be defined by 
    $$\psi (x,y) =\left(\left\{ \dfrac{1}{x}\right\}, \dfrac{1}{\lfloor 1/x\rfloor+y} \right),$$
where $\{1/x\}$ is the fractional part of $1/x$ and $\lfloor1/x\rfloor = 1/x - \{1/x\}$. Define the set $C_N=\{[0;a_1, a_2, \ldots]: \; 1 \leq a_n \leq N\}$ and so $\Lambda_N=C(N) \times C(N)$ is a compact $\psi$-invariant subset of $(0,1)^2$. Given $h: (0,1)^2 \to \R$ defined by $h(x,y)=1/x+y$, we have
\begin{align*}
    L\cap (-\infty, N)=L(\Lambda_N,\, h)\cap (-\infty, N)
    \quad
    \text{and}
    \quad
     M\cap (-\infty, N)=M(\Lambda_N,\, h)\cap (-\infty, N).
\end{align*}

Another classical spectrum also coming from Diophantine approximations is called Dirichlet spectrum. An approach given by Davenport and Schmidt \cite{DS} allows us to define this set in terms of the shift map $\sigma: \Sigma \to \Sigma$ as the set $D:= \left\{
        \limsup_{j\to\infty}\, g(\sigma^j(\theta))\colon\,
        \theta \in \Sigma
\right\}$, where $g: \Sigma \to \R$  is given by $g(\theta)=[\theta_0; \theta_1, \theta_2, \ldots]\cdot[\theta_{-1}; \theta_{-2}, \theta_{-3}, \ldots]$. Analogously, we are able to see this spectrum as a dynamically defined Lagrange spectrum in the smooth setting. Choosing $s: (0,1)^2 \to \R$ given by $s(x,y)=1/xy$, we have that
\begin{align*}
    D\cap (-\infty, N)=L(\Lambda_N,\, s)\cap (-\infty, N),
\end{align*}
where $\Lambda_N$ is the above horseshoe for the map~$\psi$.

\subsection{Horseshoes}

Unless otherwise stated in this article $M$ denotes a smooth $d$-dimensional compact riemannian manifold with $d\geq 2$. We write $\diff^r(M)$ to denote the space of diffeomorphisms from $M$ to $M$ of class $C^r$, where $r\in [1,\infty]$.

\subsubsection{Horseshoes and holonomies}
Consider $\varphi\in \diff^r(M)$, $r\in [1,\infty]$. We say that a $\varphi$-invariant compact set $\Lambda \subset M$ is \emph{hyperbolic} if there exists a decomposition of the tangent bundle of $\Lambda$ on two continuous, $D\varphi$-invariant, sub-bundles $E^s$ and $E^u$, i.e., $T\Lambda = E^s\oplus E^u$, and there exist constants $C>0$ and $\lambda \in (0,1)$ such that
\begin{align*}
    \norm{D\varphi^n(x)|_{E^s(x)}}\leq C\lambda^n
    \quand
    \norm{D\varphi^{-n}(x)|_{E^u(x)}}\leq C\lambda^n,
\end{align*}
for every $n\geq 0$ and every $x\in \Lambda$. A hyperbolic set $\Lambda$ is a \emph{horseshoe} for $\varphi$ if $\Lambda$ is infinite, totally disconnected, transitive and $\overline{\per(\varphi|_{\Lambda})} = \Lambda$.

An important feature ensured by the hyperbolic structure in the set $\Lambda$ is the existence of \emph{stable and unstable laminations}. More precisely, for each point $x\in \Lambda$ there exists a pair of traversals $\varphi$-invariant, immersed $C^r$-submanifolds $W^s(x)$ and $W^u(x)$ which are tangent respectively to $E^s(x)$ and $E^u(x)$ at $x$. These are called \emph{stable and unstable manifolds} of $\varphi$ at $x$. The \emph{stable lamination} of $\Lambda$, $W^s(\Lambda)$ is defined as the collection of $W^s(x)$, with $x\in \Lambda$. Analogously we define the \emph{unstable lamination} of $\Lambda$, $W^u(\Lambda)$. 

For $C^1$-diffeormorphisms the Stable Manifold Theorem \cite{Sh2013} ensures that the stable and unstable manifolds are locally embedded disks of dimension $\dim E^s$ and $\dim E^u$, respectively. Moreover, for any positive small $\delta$ and any point $x\in \Lambda$ if we denote by $W^s_{\delta}(x)$ ($W^u_{\delta}(x)$) the set of points $y\in W^s(x)$ ($y\in W^u(x)$) with $d(x,y)<\delta$ ($d$ is the distance associated to a riemannian structure on $M$), then the correspondences that associates each $x\in \Lambda$ to the local stable manifold $W^s_{\delta}(x)$ and the local unstable manifold $W^u_{\delta}(x)$ are continuous. In other words, we have continuity of the stable and unstable laminations.
Assuming that the diffeomorphism is $C^r$ with $r>1$, Pugh, Shub and Wilkinson in \cite{PuShWi1997} proved the stable and unstable laminations are H\"older continuous, but we cannot expect to be much more than that in general. There are examples where these maps are not even Lipschitz (see~\cite[Section 3]{PaVi1994}). 

A good regularity of the laminations shows to be particularly useful in the analysis of the the fractal properties of the horseshoe. Using the fact that the horseshoe $\Lambda$ is locally (homeomorphic to) products of the form $W^s_{\delta}(x)\cap \Lambda\times W^u_{\delta}(x)\cap \Lambda$, the good regularity of the laminations ensures a certain type of ``fractal homogeneity" which allows us to focus in the structure of the \emph{stable and stable Cantor sets} $W^s_{\delta}(x)\cap \Lambda$, $W^u_{\delta}(x)\cap \Lambda$, in any point $x\in \Lambda$, to obtain fractal properties of $\Lambda$. This is exactly the case when the ambient manifold $M$ is two dimensional where it is known, \cite{PaTa1995}, that is possible to extend the stable and unstable laminations to $C^{1+\alpha}$-foliations in a neighborhood of $\Lambda$. This extension is the initial technical step for the study of two-dimensional systems presenting highly relevant dynamical phenomena (see for example, \cite{MoYo2010}, \cite{PaYo1994}, \cite{PaTa1995} and references therein).

The regularity of the stable and unstable lamination is intrinsically related with regularity of the \emph{holonomy maps} defined by these laminations (See the discussion in \cite{PuShWi1997}). \emph{Unstable holonomies} are defined as follows: for $x_0, y_0\in \Lambda$ with $y_0\in W^u_{\delta}(x_0)$ set $H^u_{x_0,y_0}: W^s_{\delta}(x_0)\cap \Lambda\to W^s_{\delta}(y_0)\cap\Lambda$ as the map that sets each $z\in W^s_{\delta}(x_0)\cap \Lambda$ into the unique intersection point $H^u_{x_0,y_0}(z)$ between $W^u_{\delta}(z)\cap W^s_{\delta}(y_0)$. The local product structure of the horseshoe $\Lambda$ guarantees that $H^u_{x_0,y_0}(z)\in \Lambda$ and so $H^u_{x_0,y_0}$ is well defined. Analogously, we define \emph{stable holonomies}.

\subsubsection{Regular Cantor sets}
In this work we deal with subsets of the stable Cantor sets $W^s_{\delta}(x)\cap \Lambda$ which have a regular structure. The model of such concept is provided by the notion of regular Cantor set on the real line. A compact set $K\subset \R$ is a \emph{regular Cantor set} if there exist $\gamma>0$, a cover of $K$ by disjoint intervals $I_1,\dots,I_k$ and a $C^{1+\gamma}$ expanding function $\tau:\cup_{i=1}^kI_i\rightarrow \cup_{i=1}^kI_i$ satisfying that
\begin{enumerate}
    \item For every $1\leq i\leq k$, there exists $1\leq j\leq k$ such that $\tau(I_j)\supset I_i$;
    
    \item $|\tau'(t)|>1$, for every $t\in I_j$ and for every $j=1,\dots, k$;
    
    \item For every $1\leq j\leq k$ and every $n$ sufficient large, $\tau^n(I_j) \supset \bigcup_{i=1}^kI_i$;
    
    \item $K = \bigcap_{n\geq 0}\tau^{-n}(\bigcup_{i=1}^{k} I_i)$.
\end{enumerate}
In \cite{PaTa1995}, we can see that if $K$ is a regular Cantor sets, then $0< \dim_H(K)<1$ (the same is not true if we extend the notion of regular Cantor sets allowing that the map $\tau$ can be taken $C^1$). Another important feature of regular Cantor sets is that its Hausdorff dimension varies continuously with respect to the expanding map $\tau$.

\subsubsection{Dominated splitting}
Let $\Lambda$ be a compact $\varphi$-invariant set and $F$ be a continuous $D\varphi$-invariant linear fiber bundle over $\Lambda$. We say that $F$ has a \emph{dominated splitting} if there exists a decomposition $F = F_1\oplus F_2\oplus \cdots \oplus F_k$ into continuous $D\varphi$-invariant sub-bundles $F_1,\dots, F_k$ over $\Lambda$ and there exist constants $C>0$ and $\lambda\in (0,1)$ such that
\begin{align*}
    \norm{D\varphi^n(x)|_{F_{i+1}(x)}}
    \leq C\lambda^n\norm{(D\varphi^{n}(x)|_{F_i(x)})^{-1}}^{-1},
\end{align*}
Notice that for hyperbolic sets the tangent bundle $T\Lambda$ has a dominated splitting given by $T\Lambda = E^s\oplus E^s$. If additionally $E^s$ has a dominated splitting of the form $E^s = E^w\oplus E^{ss}$ we refer to the bundles $E^w$ and $E^{ss}$ as \emph{weak-stable bundle} and \emph{strong-stable bundle}. Similar terminology is used when $E^u$ has a dominated splitting.

\subsection{Symbolic dynamics}
\label{subsection:symbolic dynamics}
In this subsection we describe subshifts of finite type. This is the class of symbolic systems that provides a combinatorial way to interpret the dynamics of hyperbolic systems.

\subsubsection{Subshifts of finite type}
Let $\mathbb{A}$ be a finite set that we call \emph{alphabet} and let $B = (B_{\alpha\beta})_{\alpha,\beta\in \mathbb{A}}$ be a transition matrix , i.e., the entries of $B$ satisfies $B_{\alpha\beta} \in \{0,1\}$ for every $\alpha,\beta\in \mathbb{A}$. We say that the pair $(\alpha,\beta)$ is admissible (or $B$-admissible) if $B_{\alpha\beta} = 1$.

A finite word $ (\theta_1,\ldots, \theta_n)\in \mathbb{A}^n$ (or a sequence $(\theta_i)_{i\in\Z} \in \mathbb{A}^{\Z}$) is admissible, if $(\theta_i,\theta_{i+1})\in \mathbb{A}^2$ is admissible for every $i$. Denote by $\Sigma$ the subset of $\mathbb{A}^{\Z}$ formed by the admissible sequences. We refer to the pair $(\sigma,\, \Sigma)$ as the \emph{subshift of finite type} associated to $B$, where $\sigma:\mathbb{A}^{\Z}\to\mathbb{A}^{\Z}$ denotes the shift map.

Throughout this article we assume that $B$ is a transitive matrix, meaning that for every pair $(\alpha,\beta)\in \mathbb{A}^2$, there exists $j\in \N$ such that $B_{\alpha\beta}^j >0$. This implies the transitivity of the shift  $\sigma:\Sigma\to\Sigma$. In particular, for each pair of symbols $(\alpha, \beta)\in \mathbb{A}^2$, there exists a finite admissible word $c$ such that the word $(\alpha, c, \beta)$ is admissible (sometimes we also write $\Sigma$-admissible). We refer to such word $c$ is a \emph{gluing word} connecting $\alpha$ and $\beta$.

The notion of topologically mixing for a subshift finite type $(\sigma,\, \Sigma)$ can also be characterized in terms of the transition matrix $B$, namely it is equivalent to $B$ being \emph{aperiodic} meaning that there is a $m$ such that such that all entries of the matrix $B^m$ are positive. 

Given a sequence $\theta = (\theta_n)_{n\in\Z}\in \mathbb{A}^{\Z}$, we write
\begin{align*}
    \theta^-:= (\ldots, \theta_{-n},\ldots, \theta_{-1},\theta_0)
    \quad
    \text{and}
    \quad
    \theta^+ := (\theta_1,\ldots, \theta_n,\ldots).
\end{align*}
We also may write $\theta = (\theta^{-\ast}, \theta^+)$, where $\ast$ indicates the zero-th position of the sequence $\theta$ which in this case is placed at the symbol $\theta_0$.

Given a finite word $a = (a_1,\ldots, a_n)\in \mathbb{A}^n$ and $m\in \Z$ we denote by
\begin{align*}
    C_m(a) = \left\{
        \theta\in \mathbb{A}^{\Z}\colon
        \theta_{m-1+i} = a_i,\, \text{for all }\, 1\leq i\leq n
    \right\},
\end{align*}
the \emph{cylinder} defined by $a$ starting at $m$. Similarly, we can define the cylinder associated with a infinite sequence $a\in \mathbb{A}^{\N}$. In the case that we are working with a subshift of finite type $(\Sigma, \sigma)$ and $a\in \mathbb{A}^n$ is a finite $\Sigma$-admissible word, $C_m(a)$ represents a subset of $\Sigma$. Another simplification that we adhere is dropping the word admissible once the subshift that we are working is fixed.

For each sequence $\theta\in \Sigma$ we set $W^s_{\text{loc}}(\theta) := \{\zeta\in \Sigma\colon\, \zeta^+ = \theta^+,\, \zeta_0=\theta_0\}$ and $W^u_{\text{loc}}(\theta) := \{\zeta\in \Sigma\colon\, \zeta^- = \theta^-\}$.  For each pair of sequences $\theta,\zeta\in \Sigma$ with $\theta_0 = \zeta_0$, the intersection $W^s_{\text{loc}}(\zeta)\cap W^u_{\text{loc}}(\theta)$ consists of a single point denoted by the brackets $[\zeta, \theta] := (\theta^{-*},\zeta^+)$. The \emph{symbolic unstable holonomy} between two sequences $\theta$ and $\zeta$, with $\zeta\in W^u_{{loc}}(\theta)$, is defined as the map $h^u_{\theta,\zeta}:W^s_{\text{loc}}(\theta)\to W^s_{\text{loc}}(\zeta)$, $h^u_{\theta,\zeta}(\xi) = [\zeta, \xi]$. Symbolic stable holonomy is defined similarly.

If $\xi\in W^s_{\text{loc}}(\theta)$, then $\sigma^{-n}(\xi)\in W^s_{\text{loc}}(\sigma^{-n}(\theta))$ if and only if $\xi_{-i} = \theta_{-i}$, for every $0\leq i \leq n-1$. Under this conditions, we have the following invariance property of the holonomies: restricted to $W^s_{\text{loc}}(\theta)\cap C_{-n+1}((\theta_{-n+1},\ldots, \theta_0))$, 
\begin{align}\label{eq:simbolicHolonomyInvariance}
    \sigma^n\circ h^u_{\sigma^{-n}(\theta),\sigma^{-n}(\zeta)}\circ \sigma^{-n} = h^u_{\theta,\zeta}.
\end{align}

\subsubsection{Markov partition for horseshoes}
\label{subsection:markovPartition}

Let $\Lambda$ be a horseshoe for a diffeomorphism $\varphi\in \diff^r(M)$, $r\geq 1$, and let $\delta>0$ be a small real number. A \emph{rectangle} $P$ is a subset of $\Lambda$ with $\diam(P)\leq \delta$, such that for every $x,y\in P$,
\begin{align}\label{eq:090323.1}
    \{[x,y]_{\delta}\} := W^s_{\delta}(x)\cap W^u_{\delta}(y) \subset P.
\end{align}
A rectangle $P\subset \Lambda$ is said to be \emph{proper} if $\overline{\interior_{\Lambda}(P)} = P$. A \emph{Markov partition} \linebreak $\mathcal{P} = \{P(\alpha);\, \alpha\in \mathbb{A}\}$ for $\Lambda$ is a finite partition of $\Lambda$ by proper rectangles satisfying that for every $\alpha,\beta\in \mathbb{A}$, $\interior_{\Lambda}(P(\alpha))\cap \interior_{\Lambda}(P(\beta)) = \varnothing$ and if  $x\in \interior_{\Lambda}(P(\alpha))\cap \varphi^{-1}(\interior_{\Lambda}(P(\beta)))$, then
\begin{align*}
    \varphi(W^s_{\delta}(x)\cap P(\alpha))\subset W^s_{\delta}(\varphi(x))\cap P(\beta)
    \quad
    \text{and}
    \quad
    \varphi(W^u_{\delta}(x)\cap P(\alpha))\supset W^u_{\delta}(\varphi(x))\cap P(\beta).
\end{align*}
The finite set $\mathbb{A}$ is called \emph{alphabet} associated with the Markov partition $\mathcal{P}$. By Bowen, \cite{Bo2008}, given a horseshoe we can find Markov partitions with arbitrarily small diameters $\delta>0$.

Let $\mathcal{P} = \{P(\alpha);\ \alpha\in \mathbb{A}\}$ be a Markov partition for the horseshoe $\Lambda$. We say that a pair $(\alpha,\beta)\in \mathbb{A}\times\mathbb{A}$ is \emph{admissible} (or $\Lambda$-admissible) if $\varphi(P(\alpha))\cap P(\beta)\neq \varnothing$. Set,
\begin{align*}
    \Sigma = \left\{
        (\theta_i)_{i\in \Z} \in \mathbb{A}^{\Z};\
        (\theta_i, \theta_{i+1}) \text{ is admissible for every } i\in \Z 
    \right\},
\end{align*}
and note that $\Sigma$ is invariant by the shift map $\sigma:\mathbb{A}^{\Z}\to\mathbb{A}^{\Z}$ and by the transitivity of the horseshoe, $\sigma|_{\Sigma}$ is transitive. Moreover, there exists a bi-H\"older map ( \cite[Theorem 19.1.2]{KaHa1995}) $\Phi_{\varphi}:\Sigma\rightarrow \Lambda$ which conjugates $\varphi|_{\Lambda}$ and $\sigma|_{\Sigma}$, i.e.,  $\Phi_{\varphi}\circ \sigma|_{\Sigma} = \varphi\circ \Phi_{\varphi}$.  In this case, the transition matrix $B = (B_{\alpha\beta})$ is determined by the condition that
\begin{align*}
    B_{\alpha\beta} = \left\{
        \begin{array}{cl}
            1, & \varphi(P(\alpha))\cap P(\beta) \neq \varnothing \\
            0, & \text{ otherwise}.
        \end{array}
    \right.
\end{align*}

We can use the map $\Phi_{\varphi}$ to relate the dynamics of the horseshoe with its symbolic counterpart. For instance, $\Phi_{\varphi}$ relates the previously defined notions of brackets. Indeed, first observe that there exists $n_{\delta}$ such that for any finite word $a \in \mathbb{A}^{2n_{\delta}+1}$ and any $\theta \in C_{-n_{\delta}}(a)$, we have that the restrictions $\Phi_{\varphi}: W^s_{\text{loc}}(\theta)\cap C_{-n_{\delta}}(a)\to W^s_{\delta}(\Phi_{\varphi}(\theta))\cap \Lambda$ and $\Phi_{\varphi}: W^u_{\text{loc}}(\theta)\cap C_{-n_{\delta}}(a)\to W^u_{\delta}(\Phi_{\varphi}(\theta))\cap \Lambda$ are well defined. So, for $\theta,\zeta\in C_{-n_{\delta}}(a)$, 
\begin{align}\label{eq:090323.2}
    \left[
        \Phi_{\varphi}(\theta),\,
        \Phi_{\varphi}(\zeta)
    \right]_{\delta} = \Phi_{\varphi}\left(\left[
        \theta,\,
        \zeta
    \right]\right).
\end{align}

Notice that if $x_0,y_0$ are in the same rectangle of the Markov partition $\mathcal{P}$, say $\mathcal{P}(\alpha)$, with $y_0\in W^u_{\delta}(x_0)$ we may use the brackets notation to have an explicit expression to the unstable holonomy $H^u_{x_0,y_0}:W^s_{\delta}(x_0)\cap \mathcal{P}(\alpha)\to W^s_{\delta}(y_0)\cap\mathcal{P}(\alpha)$ by $H^u_{x_0,y_0}(z) = [y_0,z]_{\delta}$. Analogously we can express the stable holonomy using the brackets notation.

Take $x_0,y_0\in\Lambda$ with $y_0\in W^u_{\delta}(x_0)$, and consider $H^u_{x_0,y_0}$ the unstable holonomy between $W^s_{\delta}(x_0)\cap \Lambda$ and $W^s_{\delta}(y_0)\cap \Lambda$. For any finite word $a \in \mathbb{A}^{2n_{\delta}+1}$, if $\xi\in W^s_{\text{loc}}(\Phi^{-1}_{\varphi}(x_0))\cap C_{-n_{\delta}}(a)$, then $\Phi_{\varphi}(\xi) \in W^s_{\delta}(x_0)\cap\Lambda$ and so, by equation \eqref{eq:090323.2},
\begin{align}\label{eq:090323.3}
    H^u_{x_0,y_0}\circ \Phi_{\varphi}(\xi)
    = [y_0,\, \Phi_{\varphi}(\xi)]_{\delta}
    = \Phi_{\varphi}\left(\left[
        \Phi_{\varphi}^{-1}(y_0),\, \xi
    \right]\right) = \Phi_{\varphi}\circ h^u_{\Phi_{\varphi}^{-1}(x_0), \Phi^{-1}_{\varphi}(y_0)}(\xi).
\end{align}
In particular, we can express the invariance of the holonomy in the following way: there exists a decreasing sequence of positive parameters $(\delta_n)_n$, $\delta_0 = \delta$ such that the restriction of $H^u_{x_0,y_0}$ to $W^s_{\delta_n}(x_0)\cap \Lambda$ satisfies
\begin{align}\label{eq:holonomyInvariance}
    H^u_{x_0,y_0} = \varphi^n\circ H^u_{\varphi^{-n}(x_0),\varphi^{-n}(y_0)}\circ \varphi^{-n}.
\end{align}
This is a direct consequence of the symbolic holonomy invariance \eqref{eq:simbolicHolonomyInvariance} and equation \eqref{eq:090323.3} above.

\subsubsection{Symbolic towers}
\label{subsubsection:symbolicTowers}
Let $\Sigma\subset \mathbb{A}^{\Z}$ such that $(\sigma,\, \Sigma)$ is a subshift of finite type associated to a transition matrix $B$. For every finite set of finite $B$-admissible words $F\subset \cup_{j\geq 1}\mathbb{A}^j$ we may associated the subshift of the sequences that are produced by concatenations of words in $F$ in the following way: consider the alphabet $\mathbb{A}(F):= F$ and the transition matrix $B(F) = (B(F)_{a,b})_{a,b\in F}$ is given by
\begin{align*}
    B(F)_{a,b} = \left\{
        \begin{array}{cc}
            1, & (a,b) \text{ is } B-\text{admissible} \\
            0, & \text{ otherwise}.
        \end{array}
    \right.
\end{align*}
Denote by $\sigma(F):\mathbb{A}(F)^{\Z}\to\mathbb{A}(F)^{\Z}$ the shift map and by $\Sigma(F)$  the set of $B(F)$ admissible words in the alphabet $\mathbb{A}$. Notice that we can see $\Sigma(F)$ as a subset $\Sigma$ and with this identification, for every $a\in \mathbb{A}(F)$, we can write $\sigma(F)|_{C_0(a)} = \sigma^{|a|}$, where $|a|$ denotes the size of the word $a$. In particular, if $F\subset \mathbb{A}^k$ consists of $B$-admissible subwords of the same size in the alphabet $\mathbb{A}$, then we simple write $\sigma(F) = \sigma^k$ and $\Sigma(F) \subset \Sigma$.

We say that $\Sigma$ is a \emph{tower} of size $k\in \N$ over a $\sigma^k$-invariant subset $\Sigma_0\subset \Sigma$ if $\Sigma_0 = \Sigma(F)$, where $F = \{ a\in \mathbb{A}^k\colon\, a \text{ is } B\text{-admissible and } C_0(a)\cap \Sigma_0\neq \varnothing\}$.
\begin{example}
\label{ex:symbolicTowers}
\normalfont
    A natural way to produce a tower over a given subshift can be described as follows. Let $\mathbb{A}_0$ be a finite alphabet and $(\sigma_0,\, \Sigma_0)$ be a subshift of finite type on the alphabet $\mathbb{A}_0$ associated to a transition matrix $B_0$. Define the alphabet,
\begin{align*}
    \mathbb{A} := \left\{
        (j,\, \alpha)\colon\,
        0\leq j \leq k-1
        \quad
        \text{and}
        \quad
        \alpha \in \mathbb{A}_0
    \right\},
\end{align*}
and transition matrix $B$ on the alphabet $\mathbb{A}$ by the rule that a pair of symbols in $\mathbb{A}$, $((j,\alpha),(i,\beta))$ is $B$-admissible, if $0\leq j < k-1$, $i = j + 1$ and $\beta = \alpha$ or $j = k-1$, $i = 0$ and $(\alpha,\, \beta)$ is $B_0$-admissible. Let $(\sigma,\, \Sigma)$ be the subshift of finite type on the alphabet $\mathbb{A}$ associate to $B$. Then, it is natural to see $\Sigma$ as a tower of size $k$ for $\Sigma_0$. Indeed, $\Sigma_0$ can be identified with $\Sigma(F)\subset \mathbb{A}^k$ where $F = \{((0,\alpha),\ldots, (k-1,\alpha))\colon\, \alpha \in \mathbb{A}_0\}$ and $\sigma_0$ can be identified with $\sigma^k$.
\end{example}
\begin{proposition}
\label{prop:symbolicStructureForTowerHorseshoe}
    Let $\varphi\in \diff^r(M)$ and let $\Lambda_0\subset M$ be a horseshoe associated to $\varphi^k$, $k\geq 2$. Then, the set $\Lambda = \cup_{j=0}^{k-1}\varphi^{-j}(\Lambda_0)$ is a horseshoe for $\varphi$. Moreover, there exist an alphabet $\mathbb{A}$ and sets $\Sigma,\, \Sigma_0\subset \mathbb{A}^{\Z}$ such that $\Sigma = \cup_{j=0}^{k-1}\sigma^{-j}(\Sigma_0)$ and $(\sigma,\, \Sigma)$, $(\sigma^k,\, \Sigma_0)$ are subshifts of finite type conjugated respectively to $(\varphi,\, \Lambda)$ and $(\varphi^k,\, \Lambda_0)$ and $\Sigma$ is a tower of size $k$ over $\Sigma_0$.
\end{proposition}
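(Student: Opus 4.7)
The plan is to verify that $\Lambda$ is a $\varphi$-horseshoe, and then build the symbolic representation of $(\varphi,\Lambda)$ on top of that of $(\varphi^k,\Lambda_0)$ via the recipe of Example~\ref{ex:symbolicTowers}. The $\varphi$-invariance of $\Lambda$ is immediate: from $\varphi^k(\Lambda_0)=\Lambda_0$ one obtains $\varphi(\varphi^{-j}(\Lambda_0))=\varphi^{-(j-1)}(\Lambda_0)$ for $1\le j\le k-1$ and $\varphi(\Lambda_0)=\varphi^{-(k-1)}(\Lambda_0)$, so $\varphi$ permutes the levels cyclically. Hyperbolicity of $\Lambda$ for $\varphi$ is pulled back from that of $\Lambda_0$ for $\varphi^k$: at $x\in\varphi^{-j}(\Lambda_0)$ define $E^{s}(x):=D\varphi^{-j}(\varphi^j(x))E^{s}_0(\varphi^j(x))$, and $E^{u}(x)$ analogously, where $E^{s}_0\oplus E^{u}_0$ is the hyperbolic splitting of $\Lambda_0$; the $D\varphi^k$-contraction/expansion estimates on $\Lambda_0$ give $D\varphi$-contraction/expansion estimates on $\Lambda$ upon taking $k$-th roots and absorbing the uniform distortion of the intermediate iterates into the constant $C$. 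The remaining items are routine: compactness from a finite union of compact sets; infinitude from $\Lambda_0\subset\Lambda$; total disconnectedness from the homeomorphisms $\varphi^{-j}:\Lambda_0\to\varphi^{-j}(\Lambda_0)$ together with the sum theorem for zero-dimensional compact metric spaces; density of $\per(\varphi|_\Lambda)$ from $\per(\varphi^k|_{\Lambda_0})\subset\per(\varphi|_\Lambda)$; and transitivity from the observation that the $\varphi$-orbit of any $z\in\Lambda_0$ with $\varphi^k$-dense orbit cycles through each level with dense restriction.

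For the symbolic construction, apply Subsection~\ref{subsection:markovPartition} to $(\varphi^k,\Lambda_0)$ to obtain an alphabet $\mathbb{A}_0$, a transitive transition matrix $B_0$, a subshift of finite type $(\tilde\sigma,\tilde\Sigma)$ and a bi-H\"older conjugacy $\tilde\Phi:\tilde\Sigma\to\Lambda_0$ with $\tilde\Phi\circ\tilde\sigma=\varphi^k\circ\tilde\Phi$. Define $\mathbb{A}$, $B$ and $(\sigma,\Sigma)$ exactly as in Example~\ref{ex:symbolicTowers}; by construction $\Sigma$ is a tower of size $k$ over $\Sigma_0:=\Sigma(F)$ with $F=\{((0,\alpha),(1,\alpha),\ldots,(k-1,\alpha)):\alpha\in\mathbb{A}_0\}$, and there is a tautological identification $(\sigma^k|_{\Sigma_0},\Sigma_0)\cong(\tilde\sigma,\tilde\Sigma)$. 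Each $\theta\in\Sigma$ admits a well-defined phase $j(\theta)\in\{0,\ldots,k-1\}$ such that $\theta_0=(j(\theta),\alpha_0)$, together with a projected sequence $\tilde\theta\in\tilde\Sigma$ obtained by reading the second coordinates of $\theta$ at positions where the first coordinate vanishes. Set $\Phi:\Sigma\to\Lambda$ by $\Phi(\theta):=\varphi^{j(\theta)}(\tilde\Phi(\tilde\theta))$; the image lies in $\varphi^{j(\theta)}(\Lambda_0)=\varphi^{-(k-j(\theta))}(\Lambda_0)\subset\Lambda$, and the continuity and bi-H\"older estimates for $\Phi$ are inherited from $\tilde\Phi$ together with the smoothness of each $\varphi^j$.

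The intertwining identity $\Phi\circ\sigma=\varphi\circ\Phi$ is checked case by case on the phase. When $j(\theta)<k-1$ the shift $\sigma$ just increments the phase by one while $\tilde\theta$ is unchanged, so both sides reduce to $\varphi^{j(\theta)+1}(\tilde\Phi(\tilde\theta))$; when $j(\theta)=k-1$ the phase of $\sigma\theta$ resets to $0$ and $\widetilde{\sigma\theta}=\tilde\sigma\tilde\theta$, so the left-hand side equals $\tilde\Phi(\tilde\sigma\tilde\theta)=\varphi^k(\tilde\Phi(\tilde\theta))$ via $\tilde\Phi\circ\tilde\sigma=\varphi^k\circ\tilde\Phi$, matching the right-hand side $\varphi\circ\varphi^{k-1}(\tilde\Phi(\tilde\theta))$. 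Restricting $\Phi$ to $\Sigma_0$ recovers precisely $\tilde\Phi$ under the identification $\Sigma_0\cong\tilde\Sigma$, which yields the asserted conjugacy between $(\sigma^k|_{\Sigma_0},\Sigma_0)$ and $(\varphi^k|_{\Lambda_0},\Lambda_0)$. The only subtle point I anticipate is that the levels $\varphi^{-j}(\Lambda_0)$ need not be pairwise disjoint; however, since the Markov conjugacy of Subsection~\ref{subsection:markovPartition} is only asked to be bi-H\"older and is generically finite-to-one on the boundaries of the Markov rectangles anyway, any identifications forced by overlaps of levels are absorbed into the inevitable non-injectivity of the coding and require no extra argument.
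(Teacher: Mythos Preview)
Your approach is essentially the same as the paper's: take a symbolic model $(\tilde\sigma,\tilde\Sigma)$ for $(\varphi^k,\Lambda_0)$, apply the tower construction of Example~\ref{ex:symbolicTowers}, and check the conjugacy. The paper's proof is two sentences and leaves everything you wrote as ``direct to check''; your version fills in those details correctly, including the explicit formula $\Phi(\theta)=\varphi^{j(\theta)}(\tilde\Phi(\tilde\theta))$ and the case split on the phase for the intertwining relation.

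One remark on your final paragraph: your dismissal of possible overlaps among the levels $\varphi^{-j}(\Lambda_0)$ is not quite right. For a horseshoe in the paper's sense (totally disconnected), the Markov coding of Subsection~\ref{subsection:markovPartition} is a genuine homeomorphism, not merely finite-to-one on boundaries, so non-injectivity cannot be swept under that rug. The paper's proof does not address this either; in the only place the proposition is used (Section~\ref{subsec:choiceHorseshoe}), the levels are disjoint by construction, so the issue does not arise. If you want the proposition to stand in full generality, you would either add disjointness of the levels as a hypothesis or argue separately that a point lying in two levels forces a $\varphi$-periodicity that is compatible with the coding.
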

\begin{proof}
    Consider an alphabet $\mathbb{A}_0$ and a subshift of finite type $(\sigma_0, \Sigma_0)$ on the alphabet $\mathbb{A}_0$ conjugated to the horseshoe $(\varphi^k,\, \Lambda_0)$. Using the construction given in the Example \ref{ex:symbolicTowers} we can produce a subshift of finite type $(\sigma,\, \Sigma)$ such that $\Sigma_0$ can be seen as a $\sigma^k$-invariant subset of $\Sigma$ and $\sigma_0 = \sigma^k$. It is direct to check that, $(\sigma,\, \Sigma)$ is conjugated to $(\varphi,\, \Lambda)$ and $\Sigma$ is a tower of size $k$ over $\Sigma_0$.
\end{proof}

\subsection{Intrinsic derivatives}
In many moments in this article we deal with functions defined only on compact sets (such as the ones defined on stable/unstable Cantor sets) aiming to extract regularity properties that allow us to measure geometric distortions of these sets in small scales.

The differentiability of maps defined in compact sets was comprehensively discussed in \cite{Wh1992}, in which extension theorems were obtained guaranteeing that those maps are differentiable in the classical sense. However, the content of this subsection is a collection of results established in \cite{PaVi1994}, added here for the convenience of the reader.

Let $X$ be a compact subset of $\R^m$. We say that a function $F:X\to\R^n$ is \emph{intrinsically differentiable} (or shorter $IC^1$) if there exists a continuous map $\Delta F: X\times X\to \M_{n\times m}(\R)$ such that for every $x,y\in X$:
\begin{align*}
    F(x) - F(y) = \Delta F(x,y)\, (x-y).
\end{align*}
If additionally the map $\Delta F$ is $\gamma$-H\"older continuous, for some $\gamma\in (0,1]$, we say that $F$ is $IC^{1+\gamma}$. It is important to point out that differently of classical derivatives, the intrinsic derivative, when exists, does not need to be unique.

Classical derivatives and intrinsic derivatives share many basic properties. See~\cite[Section 2]{PaVi1994} for precise statements and proofs of these properties that we make use here without further reference. These similarities are natural due to the fact that intrinsic differentiable functions on compact sets are restrictions of classical differentiable functions defined in slightly bigger open sets. See \cite{Wh1992}.

For a compact set $X\subset \R^m$ we also define the \emph{intrinsic tangent space} of $X$ at a point $x\in X$, $IT_xX$ as the span of the directions $v\in \R^m$ such there exists a compact set $K(v)\subset \R$, $0\in K(v)$, and a $IC^1$ map $\gamma:K(v)\to X$ such that $\Delta\gamma(0,0)\cdot 1 = v$. Notice that if $F:X\to \R^n$ is $IC^1$, then for every $z\in X$, the intrinsic derivative $\Delta F(z,z)$ sends $IT_zX$ into $IT_{F(z)}F(X)$.

\subsection{Linearization}
Given $T\in \GL_d(\R)$ a hyperbolic matrix of saddle-type, i.e., $\spec(T) := \spec(T)\cap \{z\in \C\colon |z|<1\} \neq \varnothing$ and $\spec(T) := \spec(T)\cap \{z\in \C\colon |z|>1\} \neq \varnothing$, we define the \emph{spectral spreading} of $T$ by the quantities
\begin{align*}
    \rho^{\pm}(T) := \frac{
        \max\left\{
            |\log|\lambda||\colon\,
            \lambda \in \spec^{\pm}(T)
        \right\}
    }{
        \min\left\{
            |\log|\lambda||\colon\,
            \lambda \in \spec^{\pm}(T)
        \right\}
    }.
\end{align*}
Given a natural number $r\geq 1$, we define the $r$-\emph{smoothness} of $T$ by
\begin{align*}
    K^r(T) := \left\lfloor
        \max_{m,n\in \N\colon\, m+n = r} \min\left\{
            \frac{m}{\rho^+(T)},
            \frac{n}{\rho^-(T)}
        \right\}
    \right\rfloor.
\end{align*}
Let $\lambda_1,\cdots, \lambda_d$ be the eigenvalues of $T$ counted with multiplicity and consider the function $\mu:\C\times\Z^d\to \C$ given by $\mu(\lambda, l) = \lambda\cdot(\lambda_1^{l_1}\cdots \lambda_d^{l_d})^{-1}$. We say that $T$ satisfies the \emph{resonant condition of order} $r$ if $|\mu(\lambda, l)| \neq 1$, for every $l\in \Z^d$ with $2\leq |l| \leq r$ and for every $\lambda \in \spec(T)$.
\begin{proposition}
\label{prop:resonanceParametersProperties}
    It holds that,
    \begin{enumerate}
        \item\label{prop:resonanceParametersProperties-item1} The maps $T\mapsto \rho^+(T)$ and $T\mapsto \rho^-(T)$ are continuous;
        \item\label{prop:resonanceParametersProperties-item3} The set of matrices satisfying the resonant condition of order $r$ is open and dense on $\GL_d(\R)$.
    \end{enumerate}
\end{proposition}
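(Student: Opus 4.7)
For item (\ref{prop:resonanceParametersProperties-item1}), the plan is to leverage the well-known fact that the spectrum of a matrix, viewed as a multiset in $\C$, depends continuously on the matrix (e.g.\ via continuity of the coefficients of the characteristic polynomial and of its roots). Since $T$ is hyperbolic of saddle-type, no eigenvalue lies on the unit circle, so this condition persists on an open neighborhood of $T$ in $\GL_d(\R)$, and on such a neighborhood the decomposition $\spec(T') = \spec^+(T')\sqcup\spec^-(T')$ varies continuously in the Hausdorff sense. On this neighborhood the map $\lambda\mapsto |\log|\lambda||$ is continuous (all eigenvalues stay bounded away from $0$ and from the unit circle), and the maximum and minimum of a continuous function over a continuously varying finite multiset is continuous. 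Dividing the two then gives continuity of $\rho^\pm$.

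For item (\ref{prop:resonanceParametersProperties-item3}), openness is straightforward: the resonance condition is the conjunction of finitely many strict inequalities $|\mu(\lambda,l)|\neq 1$ indexed by the finite set $\{l\in\Z^d:2\leq |l|\leq r\}$ and by $\lambda\in\spec(T)$; by continuity of the spectrum, each of these strict inequalities is preserved under small perturbations, and finite intersections of open sets are open.

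For density, the main idea is to reduce everything to a linear condition on the log-moduli of the eigenvalues. Taking the logarithm of the absolute value of $\mu(\lambda,l)=\lambda\cdot(\lambda_1^{l_1}\cdots\lambda_d^{l_d})^{-1}$, the offending locus $|\mu(\lambda_j,l)|=1$ becomes the affine hyperplane
\[
 \log|\lambda_j|-\sum_{i=1}^d l_i\log|\lambda_i|=0
\]
in the variables $(\log|\lambda_1|,\ldots,\log|\lambda_d|)\in\R^d$. Since there are only finitely many admissible pairs $(j,l)$, the bad set in this $\R^d$ of log-moduli is a finite union of hyperplanes, so its complement is open and dense in $\R^d$. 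It therefore suffices to show that, given any $T\in\GL_d(\R)$ and any neighborhood $\mathcal{U}$ of $T$, one can realize arbitrarily small prescribed perturbations of the vector $(\log|\lambda_1(T)|,\ldots,\log|\lambda_d(T)|)$ by matrices in $\mathcal{U}$. I would first perturb $T$ to a matrix $T_1\in\mathcal{U}$ with pairwise distinct eigenvalues (this is a well-known open and dense condition in $\GL_d(\R)$), so that $T_1$ is (complex) diagonalizable and admits a smooth local parametrization $T'\mapsto (\lambda_1(T'),\ldots,\lambda_d(T'))$ whose differential is surjective at $T_1$ onto the space of admissible eigenvalue perturbations (respecting the complex-conjugate symmetry imposed by $T'\in\GL_d(\R)$). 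Concretely, writing $T_1=PDP^{-1}$ with $D$ block-diagonal, perturbing $D$ within the Lie algebra of block-diagonal matrices yields perturbations of the eigenvalues (and hence of their log-moduli) covering an open set of $\R^d$ around $(\log|\lambda_j(T_1)|)_j$. Composing the two perturbations and choosing the second one to land in the dense complement of the hyperplane arrangement above produces the desired resonant matrix in $\mathcal{U}$.

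The main technical point is the last one: controlling the image of the eigenvalue-perturbation map so that the log-moduli can be moved freely enough to miss finitely many hyperplanes, and ensuring this remains inside the real space $\GL_d(\R)$. Once one is past the reduction to matrices with distinct eigenvalues, the diagonal/block-diagonal perturbation argument handles this in a standard way, so I do not anticipate any serious difficulty beyond writing down the parametrization carefully.
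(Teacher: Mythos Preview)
Your proposal is correct and follows the same approach as the paper, which is simply the continuity of eigenvalues for item~\ref{prop:resonanceParametersProperties-item1} and the observation that the resonant condition imposes only finitely many constraints for item~\ref{prop:resonanceParametersProperties-item3}. The paper's own proof is a single sentence to this effect; you have merely spelled out the density argument (via the hyperplane arrangement in the log-moduli and the block-diagonal perturbation) in far greater detail than the authors chose to.
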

\begin{proof}
    The proof of the proposition follows directly from the continuity of the eigenvalues with respect to the matrices (for item \ref{prop:resonanceParametersProperties-item3} notice that the resonant condition of order $r$ only imposes a finite number of constraints in the set of eigenvalues).
\end{proof}

Let $\varphi:M\to M$ be a $C^r$-diffeomorphism, $r\geq 1$ and consider $p\in M$ a hyperbolic periodic point of saddle-type.  We write $\rho^{\pm}_p(\varphi)$ and $K_{p}^r(\varphi)$ to denote respectively $\rho^{\pm}(D\varphi^{\per(p)}(p))$ and $K^r(D\varphi^{\per(p)}(p))$. We also say that $(\varphi, p)$ satisfies the resonant condition of order $r$ if $D\varphi^{\per(p)}(p)$ satisfies the resonant condition of order $r$.

The diffeomorphism $\varphi$ admits a $k$-\emph{linearization} around $p$, if there exists a $C^k$-chart $h:U\to \R^d$ centered at $p$, such that $h \circ \varphi^{\per(p)}(x) = T \circ h(x)$, for every $x \in U \cap \varphi^{-\per(p)}(U)$, where $T$ is the expression of $D\varphi^{\per(p)}(p)$ in this system of coordinates. The next result give conditions for $C^k$-linearization in terms of $K_p^r(\varphi)$.
\begin{theorem}[Sell, \cite{Se1985}]
\label{thm:sellTheorem}
    Let $\varphi\in \diff^{3r}(M)$, $r\geq 2$, admitting a hyperbolic periodic point of saddle-type $p$. Assume that $(\varphi, p)$ satisfies the resonant condition of order $r$. Then, $\varphi$ admits a $K^r_p(\varphi)$-linearization around $p$.
\end{theorem}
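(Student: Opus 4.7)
My plan is to follow the classical Sternberg--Sell strategy: reduce to a fixed-point setting, eliminate the Taylor coefficients of $\varphi$ up to order $r$ using the non-resonance hypothesis, and then solve the remaining conjugacy equation as a fixed point of a contraction whose maximal attainable smoothness is precisely $K^r_p(\varphi)$.

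First I would replace $\varphi$ by $\varphi^{\per(p)}$, turning $p$ into a hyperbolic fixed point with linear part $T=D\varphi^{\per(p)}(p)$. In any $C^{3r}$ chart centered at $p$ the map reads $\tilde\varphi(x) = Tx + N(x)$ with $N$ vanishing to order two at the origin. Next I would carry out the \emph{polynomial normal form} reduction: the non-resonance hypothesis asserts $\lambda \neq \lambda_1^{l_1}\cdots \lambda_d^{l_d}$ for $\lambda\in\spec(T)$ and $2\leq |l|\leq r$, which is exactly the invertibility condition on the homological operator $P\mapsto P\circ T - T\cdot P$ acting on the space of vector-valued homogeneous polynomials of degree $s\in\{2,\ldots,r\}$. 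Composing inductively with polynomial maps $\id + P_s$ for $s=2,\ldots,r$ kills all Taylor coefficients of $\tilde\varphi$ between orders two and $r$, leaving $\tilde\varphi(x) = Tx + R(x)$ with $R$ vanishing at the origin to order $r+1$. The $C^{3r}$ hypothesis provides a buffer so that after these $r-1$ polynomial changes of coordinates one is still left with enough smoothness for the next step.

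Then I would solve the conjugacy equation $h\circ T = \tilde\varphi \circ h$ by setting $h=\id + u$ and writing the equivalent functional equation
\[
u(Tx) - Tu(x) = R(x+u(x)).
\]
After multiplying $R$ by a cutoff supported in a small neighborhood of the origin, I would split $\R^d = E^s\oplus E^u$ along the $T$-invariant stable/unstable decomposition, project the equation onto each factor, and iterate on each side in the direction in which $T$ contracts (forwards on the $E^s$-component, backwards on the $E^u$-component). This produces a candidate $u$ as two telescoping series whose terms are $R$ composed with positive or negative iterates of $T$. I would set up these series as the fixed point of an operator $\mathcal L$ on a closed ball of the Banach space of $C^k$ maps vanishing to order $r+1$ at the origin, equipped with a suitable weighted $C^k$ norm.

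The main difficulty lies in the third step, where one must check that $\mathcal L$ is a well-defined contraction exactly when $k \leq K^r_p(\varphi)$. When one computes the $C^k$ seminorm via the Fa\`a di Bruno formula, each term of order $k = m+n$ with $m$ derivatives falling in stable directions and $n$ in unstable directions produces, after the $j$-th iterate, a product of $\|(T|_{E^s})^j\|^m$, $\|(T^{-1}|_{E^u})^j\|^n$ and $|D^k R|$ evaluated at a point whose size is governed by the hyperbolic dynamics. The order-$(r+1)$ vanishing of $R$ is what lets these products remain summable, and this summability holds precisely under the quantitative balance encoded by $K^r_p(\varphi) = \lfloor \max_{m+n=r} \min\{m/\rho^+, n/\rho^-\}\rfloor$: any split $m+n$ of $k$ derivatives between stable and unstable directions must be absorbable into the $r$ orders of cancellation once tempered by the spectral spreadings $\rho^{\pm}$. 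Carrying out this Fa\`a di Bruno bookkeeping --- and verifying that the nonlinear correction $u(T^j x)$ inside $R$ does not spoil the estimates --- is, in my view, the true technical heart of the argument.
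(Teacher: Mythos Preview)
The paper does not prove this statement at all: it is quoted verbatim as a result of Sell \cite{Se1985} and used as a black box in Proposition~\ref{prop:DiffResonantCondition}. So there is no ``paper's own proof'' to compare against; your sketch is an attempt to reconstruct Sell's argument rather than the authors'.

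That said, your outline follows the right architecture (polynomial normal form followed by a contraction on the flat remainder), but two points are imprecise. First, your explanation of the $C^{3r}$ hypothesis is off: polynomial changes of coordinates are $C^\infty$ and cost no regularity, so the factor $3$ does not come from the normal-form step. In Sell's scheme the extra regularity is consumed in the analytic estimates for the fixed-point problem, where one needs control of derivatives of the remainder to sufficiently high order relative to the spectral gaps. Second, the non-resonance hypothesis in the paper is $|\mu(\lambda,l)|\neq 1$ (a condition on moduli, with $l\in\Z^d$), which is strictly stronger than the Sternberg condition $\lambda\neq\lambda_1^{l_1}\cdots\lambda_d^{l_d}$ you invoke for the homological operator; the modulus form is what Sell actually uses to make the contraction estimates go through, not merely to invert the polynomial cohomological operator. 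Neither point is fatal to your strategy, but if you intend this as a self-contained proof you should correct the role of the $3r$ assumption and explain where the stronger modulus condition is genuinely needed.
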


For any $1\leq r \leq \infty$, let $\mathcal{H}^r(M)$ be the set of diffeomorphisms $\varphi\in \diff^r(M)$ admitting a hyperbolic periodic point of saddle-type. Consider $\varphi \in \mathcal{H}^r(M)$ and let $p\in M$ be such hyperbolic point. Define $r_p(\varphi) := \lfloor 3(\rho^+_p(\varphi)+\rho^-_p(\varphi) + 1)\rfloor$. We also define the subset $\mathcal{L}^r(M)$ of $\mathcal{H}^r(M)$ of diffeomorphisms $\varphi$ admitting a hyperbolic periodic point of saddle-type $p\in M$ such that
\begin{enumerate}
    \item\label{condition1:resonantCondition} $(\varphi, p)$ satisfies the resonant condition of order $r_p(\varphi)$;

    \item\label{condition2:regularityBound} $r_p(\varphi) < r/3$.
\end{enumerate}

\begin{proposition}
\label{prop:DiffResonantCondition}
    Assume $r\geq 6$.
    \begin{enumerate}
        \item\label{prop:DiffResonantCondition-item1} If $\varphi\in \mathcal{L}^r(M)$ and $p\in M$ is a periodic point of $\varphi$ satisfying \ref{condition1:resonantCondition} and \ref{condition2:regularityBound}, then $\varphi$ admits a $3$-linearization around $p$;

        \item\label{prop:DiffResonantCondition-item2} The set $\mathcal{L}^r(M)$ is $C^1$-open in $\diff^r(M)$ and $C^r$-dense on the set of $\varphi\in \mathcal{H}^r(M)$ admitting a hyperbolic periodic point of saddle-type $p\in M$ with $r_p(\varphi)< r/3$. In particular, $\mathcal{L}^{\infty}(M)$ is $C^1$-open and $C^{\infty}$-dense in $\mathcal{H}^{\infty}(M)$.
    \end{enumerate}
\end{proposition}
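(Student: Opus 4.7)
The first item is a direct application of Sell's Theorem~\ref{thm:sellTheorem} with the order parameter $r' := r_p(\varphi)$: condition~\ref{condition2:regularityBound} gives $3r_p(\varphi) < r$, so $\varphi \in \diff^r(M) \subset \diff^{3r_p(\varphi)}(M)$, while condition~\ref{condition1:resonantCondition} supplies the resonance hypothesis, and Sell's theorem yields a $K^{r_p(\varphi)}_p(\varphi)$-linearization of $\varphi$ around $p$. It thus suffices to check $K^{r_p(\varphi)}_p(\varphi) \geq 3$. Writing $\rho^{\pm} := \rho^{\pm}_p(\varphi)$ and using $\lfloor x\rfloor \geq x - 1$ one has $r_p(\varphi) \geq 3\rho^{+} + 3\rho^{-} + 2$; choosing the admissible pair $m = \lceil 3\rho^{+}\rceil$ and $n = r_p(\varphi) - m$ gives $m/\rho^{+} \geq 3$ and $n \geq 3\rho^{-}+1$, hence $n/\rho^{-} \geq 3$. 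The maximum defining $K^{r_p(\varphi)}_p(\varphi)$ is therefore at least $3$, yielding the $3$-linearization.

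The second item rests on two features of Proposition~\ref{prop:resonanceParametersProperties}: the continuity of $T \mapsto \rho^{\pm}(T)$ and the open-and-dense character in $\GL_d(\R)$ of the resonant condition of any fixed order. Combined with the standard hyperbolic continuation of $p$ under $C^1$-small perturbations, the map $\tilde\varphi \mapsto \rho^{\pm}_{\tilde p}(\tilde\varphi)$ is continuous on a $C^1$-neighbourhood of any given $\varphi$. For $C^1$-openness of $\mathcal{L}^r(M)$, I would split according to whether $y_0 := 3(\rho^{+}_p(\varphi)+\rho^{-}_p(\varphi)+1)$ is an integer. If $y_0 \notin \Z$, then $r_{\tilde p}(\tilde\varphi)$ is locally constant equal to $r_p(\varphi) < r/3$, and openness of the resonant condition of that fixed order provides an open $C^1$-neighbourhood of $\varphi$ inside $\mathcal{L}^r(M)$. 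If $y_0 = k \in \Z$, small perturbations force $r_{\tilde p}(\tilde\varphi) \in \{k-1,k\}$; since the resonant condition is monotone in its order (order-$k$ implies order-$(k-1)$) and the order-$k$ condition is open and satisfied by $\varphi$, both possibilities for $r_{\tilde p}(\tilde\varphi)$ inherit condition~\ref{condition1:resonantCondition}, and $r_{\tilde p}(\tilde\varphi) \leq k < r/3$ gives condition~\ref{condition2:regularityBound}.

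For $C^r$-density, given $\varphi \in \mathcal{H}^r(M)$ with a hyperbolic periodic point of saddle-type $p$ satisfying $r_p(\varphi) < r/3$, the plan is to perform a $C^r$-small perturbation of $\varphi$ supported in a small neighbourhood of the orbit of $p$ (through the standard construction using a local chart at $p$, a bump function, and composition with a near-identity diffeomorphism) so as to replace $D\varphi^{\per(p)}(p)$ by any prescribed nearby matrix. Proposition~\ref{prop:resonanceParametersProperties}.\ref{prop:resonanceParametersProperties-item3} lets me choose the replacement to satisfy the resonant condition of order $r_p(\varphi)$, and by the continuity used in the openness argument the continuation $\tilde p$ satisfies $r_{\tilde p}(\tilde\varphi) \leq r_p(\varphi) < r/3$ along with the imposed resonant condition (which implies the weaker one of order $r_{\tilde p}(\tilde\varphi)$); hence $\tilde\varphi \in \mathcal{L}^r(M)$. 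The $\mathcal{L}^{\infty}(M)$ statement follows immediately since condition~\ref{condition2:regularityBound} is vacuous when $r = \infty$. The principal subtlety throughout is precisely that $r_p$ is integer-valued via a floor, hence discontinuous on the countable set $\{y_0 \in \Z\}$; the case analysis above handles those jumps, and succeeds exactly because the resonant condition weakens as its order decreases, so satisfying it at the largest attainable order dominates every nearby scenario.
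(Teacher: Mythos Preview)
Your proof is correct and follows essentially the same route as the paper's. Both arguments apply Sell's theorem with $r' = r_p(\varphi)$ for item~1, and for item~2 both use the continuity of $\rho^{\pm}$ and the openness/density of the resonant condition from Proposition~\ref{prop:resonanceParametersProperties}; the paper packages the case analysis for the floor discontinuity into the single inequality $r_p(\varphi)-1 \leq r_{\tilde p}(\tilde\varphi) \leq r_p(\varphi)$, while you make the integer/non-integer split explicit, but the content is identical.
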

\begin{proof}
    To see item \ref{prop:DiffResonantCondition-item1} notice that we can apply Theorem \ref{thm:sellTheorem} with the regularity $r':=r_p(\varphi)$. Indeed, since $\varphi\in \diff^r(M)$, $r\geq 6$, condition \ref{condition2:regularityBound} guarantees that $\varphi\in \diff^{3r'}(M)$ with $r'\geq 2$. Furthermore, condition \ref{condition1:resonantCondition} guarantees that $(\varphi, p)$ satisfies the resonant condition of order $r'$. Therefore, $\varphi$ admits a $K^{r'}_p(\varphi)$-linearization around $p$. But, by the definition of $r_p(\varphi)$ we see that $K^{r'}_p(\varphi) \geq 3$.

    To see that $\mathcal{L}^r(M)$ is open in $\mathcal{H}^r(M)$, take $\varphi\in \mathcal{L}^r(M)$ with $p$ satisfying conditions \ref{condition1:resonantCondition} and \ref{condition2:regularityBound}. By item \ref{prop:resonanceParametersProperties-item3} of Proposition \ref{prop:resonanceParametersProperties}, we can consider an open neighborhood $\mathcal{U}(\varphi)\subset \diff^r(M)$, of $\varphi$, such that any $\tilde{\varphi}\in \mathcal{U}(\varphi)$ has a periodic point $\tilde{p}$ which is the hyperbolic continuation of $p$ and $(\tilde{\varphi},\tilde{p})$ satisfies the resonant condition of order $r_p(\varphi)$. Additionally, we claim that up to decrease the neighborhood $\mathcal{U}(\varphi)$ we have that for every $\tilde{\varphi}\in \mathcal{U}_p(\varphi)$,
    \begin{align}
    \label{eq:011023.1}
        r_p(\varphi) - 1 \leq r_p(\tilde{\varphi}) \leq r_p(\varphi).
    \end{align}
    To see that, consider the continuous map $\mathcal{U}(\varphi)\ni \tilde{\varphi}\mapsto \gamma(\tilde{\varphi}) := 3(\rho^+(\tilde{\varphi}) + \rho^-(\tilde{\varphi}) + 1)$. Up to decrease $\mathcal{U}(\varphi)$, we can assume that for every $\tilde{\varphi}\in \mathcal{U}(\varphi)$,
    \begin{align*}
        |\gamma(\tilde{\varphi}) - \gamma(\varphi)|\leq \left\{
        \begin{array}{cc}
           \frac{1}{2}\min_{m\in\Z}|\gamma(\varphi) - m|  & \text{ if } \gamma(\varphi)\notin \Z\\
            1/2 & \gamma(\varphi)\in \Z.
        \end{array}
        \right.
    \end{align*}
    Thus, for any $\tilde{\varphi}\in \mathcal{U}(\varphi)$,
    \begin{align*}
        r_p(\varphi) - 1
        = \lfloor\gamma(\varphi)\rfloor - 1
        \leq \lfloor\gamma(\tilde{\varphi})\rfloor
        = r_p(\tilde{\varphi})
        \leq \lfloor\gamma(\varphi)\rfloor
        = r_p(\varphi).
    \end{align*}
    Thus, for every $\tilde{\varphi}\in \mathcal{U}(\varphi)$ we have $(\tilde{\varphi},\tilde{p})$ satisfies the resonant condition of order $r_{\tilde{p}}(\tilde{\varphi})$ and $r_{\tilde{p}}(\tilde{\varphi})\leq r_p(\varphi) \leq r/3$ guaranteeing that $\mathcal{U}(\varphi)\subset \mathcal{L}^r(M)$.

    For the density, consider $\varphi\in \mathcal{H}^r(M)$ with a hyperbolic periodic point of saddle-type satisfying $r_p(\varphi) \leq r/3$. Using again item \ref{prop:resonanceParametersProperties-item3} of Proposition \ref{prop:resonanceParametersProperties}, we can find $\tilde{\varphi}$ $C^r$-close to $\varphi$ such that if $\tilde{p}\in M$ is the hyperbolic continuation of $p$ for $\tilde{\varphi}$, then $(\tilde{\varphi}, \tilde{p})$ satisfies the resonant condition of order $r_p(\varphi)$. Moreover, we can assume that $r_{\tilde{p}}(\tilde{\varphi})$ satisfies the inequality \eqref{eq:011023.1} and the condition \ref{condition1:resonantCondition} is fulfilled. Also, it follows directly by inequality \eqref{eq:011023.1} that $r_p(\varphi)<r/3$ is a $C^1$-open condition and so for $\tilde{\varphi}$ close enough to $\varphi$ we have that condition \ref{condition2:regularityBound} is immediate. Therefore, $\tilde{\varphi}\in \mathcal{L}^r(M)$. This finishes the proof of item \ref{prop:DiffResonantCondition-item2}. 
\end{proof}

\section{Lagrange for symbolic dynamics}
\label{section:lagrangeSymbolic}

In this section, we describe the ideas introduced in Section 4 of \cite{IBMO2017} using a purely symbolic/combinatorial approach.

Let $(\sigma,\, \Sigma)$ be a subshift of finite type associated to a transition matrix $B$ and $g:\Sigma\to \R$ be a continuous function. The purpose of this section is to give sufficient conditions on the function $g$ to guarantee that the Lagrange spectrum of the data $(\sigma,\, g)$ contains ``significantly" portion of the image $g(\Sigma)$. The idea is to ensure that under some conditions on the function $g$, the set $g^{-1}(L(\sigma,\, g))$ has non-empty interior inside of some subshift of finite type.

We start describing the subset of sequences produced through elimination of a certain set of words from our collection of admissible words (we keep the notation introduced in the Subsection \ref{subsection:symbolic dynamics}). Given a set of finite words $\mathcal{W}$, we define the subset of $\Sigma$, $\Sigma(\mathcal{W}^c)$, of sequences which never enters, through shift action, into the cylinders determined by the words in $\mathcal{W}$. In other words,
\begin{align*}
    \Sigma({\mathcal{W}}^c) := \left\{
        \theta\in \Sigma \colon\,
        \sigma^n(\theta) \notin C_0(b),\,
        \forall b\in \mathcal{W},\, n\in \Z        
    \right\}.
\end{align*}
In this work we are particularly interested in the subset of sequences that are defined by exclusion of subwords of a given finite sequence $a\in \mathbb{A}^{2n+1}$. More precisely,
\begin{align*}
    \mathcal{W}_{1/3}(a) := \left\{
        b\in \cup_{j=1}^{|a|}\mathbb{A}^j \colon\,
        |b| \geq  \lfloor   |a|/3 \rfloor \text{ and } \exists\, j\in \Z \text{ with } C_j(a) \subset C_0(b)
    \right\}.
\end{align*}
Write $ \Sigma_{1/3}(a^c) :=  \Sigma(\mathcal{W}_{1/3}(a)^c)$. Observe that $\Sigma_{1/3}(a^c)\subset \Sigma$ is a shift invariant set and if the subshift $\Sigma$ is topologically mixing (have enough connections between the words), then it is reasonable to expect that, for words $a$ of sufficiently large length,  $(\sigma,\, \Sigma_{1/3}(a))$ is transitive. That is the content of the next proposition. 
\begin{proposition} \label{trans_sigma}
   Let $(\sigma,\, \Sigma)$ be a mixing subshift of finite. Then, there is a $n_0\in \N$ such that for every $n\geq n_0$ and every admissible word $a\in \mathbb{A}^{2n+1}$, $(\sigma,\, \Sigma_{1/3}(a^c))$ is a transitive subshift of finite type.
\end{proposition}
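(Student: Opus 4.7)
The plan is to first verify the SFT property and then establish transitivity via a counting argument. Observe that $\Sigma_{1/3}(a^c)$ is a subshift of finite type by construction, since it is obtained from $\Sigma$ by additionally forbidding the finite collection of words $\mathcal{W}_{1/3}(a)$. Setting $L := \lfloor (2n+1)/3 \rfloor$, any subword of $a$ of length strictly greater than $L$ contains a subword of $a$ of length exactly $L$, so $\Sigma_{1/3}(a^c)$ in fact coincides with the subshift of $\Sigma$ obtained by forbidding only the family $\mathcal{W}_L(a)$ of $\Sigma$-admissible length-$L$ subwords of $a$, which has cardinality at most $|a|-L+1 \leq 2n+1$.

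For transitivity, it suffices to prove that for every pair $(\alpha,\beta) \in \mathbb{A}^2$ there is a $\Sigma$-admissible word from $\alpha$ to $\beta$ that avoids $\mathcal{W}_L(a)$, by the standard irreducibility criterion for SFTs. I would argue by counting. Excluding the trivial case where $\Sigma$ has zero topological entropy (a mixing subshift then reduces to a single fixed point and the statement is immediate), the mixing hypothesis implies that the transition matrix $B$ is primitive, so by Perron--Frobenius there exist constants $c > 0$, $\lambda > 1$, and $k_0 \in \N$ such that for every $k \geq k_0$ and every pair $(\alpha,\beta)$ the number of $\Sigma$-admissible length-$k$ words from $\alpha$ to $\beta$ is at least $c\,\lambda^k$. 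On the other hand, for each fixed $b \in \mathcal{W}_L(a)$ and each position $1 \leq j \leq k-L+1$, the number of admissible length-$k$ words from $\alpha$ to $\beta$ with $w_j \cdots w_{j+L-1}=b$ is, by splitting at the endpoints of $b$ and applying Perron--Frobenius to each piece, at most $C\,\lambda^{k-L}$ for some constant $C$ uniform in $\alpha,\beta,b,j$.

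Summing over the at most $k$ possible positions and the at most $2n+1$ choices of $b$ gives a bound of $C(2n+1)\,k\,\lambda^{k-L}$ on the number of ``bad'' connecting words. Taking, say, $k = 3L$, the ratio of bad to total words is at most of order $nL\lambda^{-L}$, which decays exponentially in $n$. Hence there exists $n_0$ such that for every $n \geq n_0$, every admissible $a \in \mathbb{A}^{2n+1}$, and every $(\alpha,\beta) \in \mathbb{A}^2$, some good connecting word exists, establishing transitivity. The main technical point is really just the bookkeeping of uniform constants in the Perron--Frobenius estimate; a minor extra detail is verifying that every symbol of $\mathbb{A}$ actually occurs in $\Sigma_{1/3}(a^c)$, which follows from the same counting applied to closed loops at each $\alpha$, or alternatively by restricting to the essential subalphabet.
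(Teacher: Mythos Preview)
Your counting strategy is essentially the same as the paper's: both show that the number of connecting words grows exponentially while the number of constraints coming from subwords of $a$ is only polynomial. The paper obtains the lower bound by the elementary device of inserting $k-1$ free intermediate symbols and linking them with gluing words of the mixing length $m$, which gives $(\#\mathbb{A})^{k-1}$ paths; you use Perron--Frobenius to get $c\lambda^k$. Either works.

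There is, however, a genuine gap in your reduction. The statement ``for every pair $(\alpha,\beta)\in\mathbb{A}^2$ there is a $\Sigma$-admissible word from $\alpha$ to $\beta$ that avoids $\mathcal{W}_L(a)$'' is \emph{not} the irreducibility criterion for a subshift with forbidden words of length $L$: the correct criterion is that any two \emph{allowed $L$-blocks} can be joined by an allowed word. Connecting single symbols is strictly weaker; for instance, on $\{0,1\}^{\Z}$ with forbidden set $\{001,011\}$ every pair of symbols is joined by a length-$2$ word avoiding the forbidden set, yet the resulting SFT is not transitive (the block $000$ is absorbing to the right). The paper accordingly works at the block level, connecting allowed words $e,f\in\mathbb{A}^{mk}$ rather than letters. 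Your Perron--Frobenius count adapts to this setting, but once you connect $L$-blocks $u,v$ by a word $w$ you must rule out forbidden $L$-subwords of $uwv$ that straddle the $u\!\mid\! w$ or $w\!\mid\! v$ boundary, and the bound ``at most $C\lambda^{k-L}$ per forbidden word per position'' no longer applies there (a forbidden block overlapping $u$ by $L-1$ symbols constrains only one symbol of $w$). This boundary contribution can be controlled, but it needs a separate estimate; as written, your argument does not cover it.
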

\begin{proof}
    Let $B$ be the aperiodic transitive matrix associated to $\Sigma$. Consider $m\in \N$ such that $B^m_{\alpha \beta}>0$ for every $\alpha, \beta \in \mathbb{A}$. Write $2n+1 = 3mk$, with the value of $k$ to be chosen later sufficiently large. So, given $a\in \mathbb{A}^{2n+1}$ admissible, any word $b\in \mathcal{W}_{1/3}(a)$ is a subword of $a$ with size at least $mk$.
    
    In order to prove that $(\sigma,\,  \Sigma_{1/3}(a^c))$ is transitive it is enough to guarantee that for any pair of admissible words $e,f\in \mathbb{A}^{mk}$ which is not in $\mathcal{W}_{1/3}(a)$ can be connected by concatenation of words outside of $\mathcal{W}_{1/3}(a)$.
    
    Write $e = (e_1,\ldots, e_{mk})$ and $f = (f_1,\ldots, f_{mk})$ and observe that for each finite word $b = (b_1,\ldots, b_{k-1})\in \mathbb{A}^{k-1}$ we can find a different gluing word connecting $e_{mk}$ to $f_1$ of length $mk$ taking $k$ gluing words of length $m$ connecting the $b_i$'s such as: $c_{e_{mk},b_1}, c_{b_1,b_2},\ldots, c_{b_{k-1}, f_1}$. Thus, the number of admissible word of length $mk$,  $B^{mk}_{e_{mk},f_1}$, satisfies
    \begin{align*}
    B^{mk}_{e_{mk},f_1} \geq \left(
        \# \mathbb{A}
    \right)^{k-1}.
    \end{align*}
    Since for $k$ sufficiently large $(\# A)^{k-1} > 6(mk)^2 > \# \mathcal{W}_{1/3}(a)$, we see that there exists an admissible connection (a gluing word of length $mk$) in $\Sigma_{1/3}(a^c)$ between the words $e$~and~$f$. 
\end{proof}

\begin{proposition}
\label{prop:trans_sigma2}
    Let $(\sigma,\, \Sigma)$ be a subshift of finite type which is a tower of size $k$ over a mixing subshift of finite type $\Sigma_0\subset \Sigma$. Then, there exists $n_0\in \N$ such that for every $n\geq n_0$ and every admissible word $a\in \mathbb{A}^{k(2n+1)}$, we have that $(\sigma,\,  \Sigma_{1/3}(a^c))$ is a transitive subshift of finite type.   
\end{proposition}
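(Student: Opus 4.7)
The plan is to adapt the counting strategy from the proof of Proposition~\ref{trans_sigma} to the tower setting, using the mixing of the base $\Sigma_0$ as a substitute for the mixing of the full subshift. Let $B(F)$ denote the transition matrix of $\Sigma_0$ on its alphabet $F \subset \mathbb{A}^k$; by hypothesis $B(F)$ is aperiodic, so there exists $m_0 \in \N$ such that every entry of $B(F)^{m_0}$ is positive. Read back in the ambient alphabet $\mathbb{A}$, this says that for any pair $\alpha, \beta \in F$ (viewed as $B$-admissible $k$-blocks) and any $M \geq m_0$, the number of $B$-admissible $\mathbb{A}$-words of length $(M+1)k$ that begin with $\alpha$ and end with $\beta$ is at least $(\#F)^{M-1}$, obtained by freely selecting $M-1$ intermediate $F$-blocks and linking consecutive ones through the positivity of $B(F)^{m_0}$.

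Given an admissible $a \in \mathbb{A}^{k(2n+1)}$, set $N := k(2n+1) = |a|$ and write $N = 3 m_0 k \ell + r$ with $0 \leq r < 3 m_0 k$, so that $\ell$ grows linearly with $n$. Following the template of Proposition~\ref{trans_sigma}, it suffices to show that any two admissible $\mathbb{A}$-words $e, f$ of length about $m_0 k \ell$, neither containing a word of $\mathcal{W}_{1/3}(a)$, can be joined by an admissible gluing word $w$ of length close to $m_0 k \ell$ such that the concatenation $(e, w, f)$ contains no word of $\mathcal{W}_{1/3}(a)$ either. The length $|w|$ is adjusted by an additive constant at most $k-1$ to respect the level-compatibility forced by the last symbol of $e$ and the first symbol of $f$. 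By the previous paragraph, the number of such admissible gluings is at least $(\#F)^{\ell - c}$ for a constant $c$ depending only on $k$ and $m_0$, whereas $|\mathcal{W}_{1/3}(a)| \leq N^2 = k^2 (2n+1)^2$.

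For $n$, and hence $\ell$, large enough, the exponential bound $(\#F)^{\ell - c}$ exceeds any polynomial in $N$, so the same dichotomy that closes the proof of Proposition~\ref{trans_sigma} produces an admissible gluing word for which no forbidden sub-word appears anywhere in $(e, w, f)$. Since $\Sigma_{1/3}(a^c)$ is automatically a subshift of finite type---defined from $\Sigma$ by forbidding the finite list $\mathcal{W}_{1/3}(a)$ of blocks---this yields both assertions of the proposition. The main technical obstacle is the level bookkeeping: one must verify that $\ell-1$ independent slots of $F$-block freedom truly remain once the endpoints of the gluing are prescribed and the small offset needed to align levels is absorbed. This is routine once the cyclic structure of the tower is combined with the entrywise positivity of $B(F)^{m_0}$, and it is what prevents the proof from being a direct black-box application of Proposition~\ref{trans_sigma} to $(\sigma^k,\Sigma_0)$.
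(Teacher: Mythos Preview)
Your argument is sound, but the paper takes a shorter route that is precisely the black-box application you dismissed in your final sentence. The paper recodes the $\Sigma$-admissible word $a\in\mathbb{A}^{k(2n+1)}$ as a $\Sigma_0$-admissible word $b\in\mathbb{A}_0^{2n+1}$ (where $\mathbb{A}_0\subset\mathbb{A}^k$ is the base alphabet), applies Proposition~\ref{trans_sigma} directly to the mixing system $(\sigma^k,\Sigma_0)$ to obtain that $(\sigma^k,(\Sigma_0)_{1/3}(b^c))$ is transitive for all large $n$, and then reads off the tower decomposition $\Sigma_{1/3}(a^c)=\bigcup_{j=0}^{k-1}\sigma^{-j}\bigl((\Sigma_0)_{1/3}(b^c)\bigr)$, from which transitivity of $(\sigma,\Sigma_{1/3}(a^c))$ follows immediately.

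What your approach buys is self-containment: you never have to set up the correspondence $a\leftrightarrow b$ or verify that forbidding $\mathcal{W}_{1/3}(a)$ in $\Sigma$ matches forbidding $\mathcal{W}_{1/3}(b)$ in $\Sigma_0$ under the block recoding---the level bookkeeping you flagged is exactly that verification, just confronted at a different stage. What the paper's approach buys is brevity and modularity: no counting is redone, and the proof is three lines once the decomposition of $\Sigma_{1/3}(a^c)$ is stated. Your proof is effectively an inlined version of Proposition~\ref{trans_sigma} applied to the base, rather than a citation of it.
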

\begin{proof}
    Let $\mathbb{A}_0\subset \mathbb{A}^k$ such that $\Sigma_0 = \Sigma(\mathbb{A}_0)$ (see the notation introduced in Subsection \ref{subsubsection:symbolicTowers}). Since $(\sigma^k,\, \Sigma_0)$ is topologically mixing, by the Proposition \ref{trans_sigma},  there exists $n_0\in \N$ such that for every $n\geq n_0$ and every word $b\in \mathbb{A}_0^{2n+1}$ we have that $(\sigma^k, (\Sigma_0)_{1/3}(b^c))$ is a transitive subshift of finite type. For any $n\geq n_0$ and any $\Sigma$-admissible word $a\in \mathbb{A}^{k(2n+1)}$ we can associated a  $\Sigma_0$-admissible word $b \in \mathbb{A}_0^{2n+1}$ and for every $\theta\in \Sigma_{1/3}(a^c)$, we can find $0\leq j \leq k-1$ such that $\sigma^j(\theta)\in (\Sigma_0)_{1/3}(b^c)$. In other words, we can write $\Sigma_{1/3}(a^c) = \cup_{j=0}^{k-1}\, \sigma^{-j}(\Sigma_{1/3}(b^c))$. In particular, $(\sigma,\, \Sigma_{1/3}(a^c))$ is transitive.
\end{proof}

For each pair $(\alpha, \beta)\in \mathbb{A}^2$, fix $c_{\alpha,\beta}$ a gluing word of \emph{smallest} size connecting $\alpha$ and $\beta$. Consider a finite admissible word $a = (a_{-n},\ldots, a_0,\ldots, a_n)\in \mathbb{A}^{2n + 1}$ and define the map $H_a:\Sigma\to\Sigma$ given by $H_a(\theta) = (\theta^-,c_{\theta_0,a_{-n}}, a^*, c_{a_{n}, \theta_1}, \theta^+)$. More explicitly,
\begin{align*}
    H_a(\theta) := (\ldots \theta_{-2}, \theta_{-1}, \theta_0, c_{\theta_0,a_{-n}}, a_{-n}, \ldots, a_0^*, \ldots, a_{n}, c_{a_{n}, \theta_1}, \theta_1, \theta_2, \ldots).
\end{align*}
For later purposes it is important to highlight the map $H_a$ is an unstable holonomy map.
\begin{proposition}
\label{prop:boxMapHolonomy}
    For each $d\in \mathbb{A}^2$ $\Sigma$-admissible, there exists $k = k(a,d)\in \N$ such that for every $\xi\in C_0(d)$ and every $\theta\in C_0(d)\cap W^s_{\text{loc}}(\xi)$,
    \begin{align*}
        H_a(\theta) = \sigma^k \circ h^u_{\xi, \sigma^{-k} \circ H_a(\xi)}(\theta).
    \end{align*}
\end{proposition}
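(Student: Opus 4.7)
The plan is to exhibit $k$ explicitly and to verify the identity through two direct position counts. Since $\theta\in C_0(d)\cap W^s_{\text{loc}}(\xi)$, writing $d=(d_1,d_2)$ we have $\theta_0=\xi_0=d_1$, $\theta_1=\xi_1=d_2$ and $\theta^+=\xi^+$. Consequently the gluing words appearing in the definition of $H_a$ satisfy $c_{\theta_0,a_{-n}}=c_{\xi_0,a_{-n}}=:c$ and $c_{a_n,\theta_1}=c_{a_n,\xi_1}=:c'$; both depend only on $a$ and $d$. With this in hand, I would set
\[
k:=k(a,d)=n+|c|+1,
\]
which is manifestly independent of the particular sequences $\xi$ and $\theta$.

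The first step is to show that $\zeta:=\sigma^{-k}\circ H_a(\xi)$ lies in $W^u_{\text{loc}}(\xi)$, so that the unstable holonomy $h^u_{\xi,\zeta}$ is defined. Reading off $H_a(\xi)$ directly from its definition, the symbol $\xi_0$ sits at position $-(n+|c|+1)=-k$, and more generally $\xi_{-i}$ sits at position $-k-i$ for every $i\ge 0$. Applying $(\sigma^{-k}\omega)_m=\omega_{m-k}$ yields $\zeta_m=\xi_m$ for every $m\leq 0$, which is precisely the defining condition of $W^u_{\text{loc}}(\xi)$. The same index count reveals that the positive half of $\zeta$ is the concatenation $\zeta^+=(c,a_{-n},\ldots,a_n,c',\xi_1,\xi_2,\ldots)$, namely the insertion block followed by $\xi^+$.

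The second step is to apply the bracket formula. By the definition of the symbolic unstable holonomy,
\[
h^u_{\xi,\zeta}(\theta)=[\zeta,\theta]=(\theta^{-*},\zeta^+),
\]
and substituting the expression for $\zeta^+$ together with $\theta^+=\xi^+$ gives the sequence
\[
h^u_{\xi,\zeta}(\theta)=(\ldots,\theta_{-1},\theta_0,c,a_{-n},\ldots,a_n,c',\theta_1,\theta_2,\ldots),
\]
with position $0$ located at $\theta_0$. Shifting by $\sigma^k$ moves the symbol occupying position $k=n+|c|+1$ to position $0$; a direct count shows that this symbol is precisely $a_0$, and that after the shift the remaining coordinates match those of $H_a(\theta)$ slot by slot. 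This yields the desired identity $\sigma^k\circ h^u_{\xi,\zeta}(\theta)=H_a(\theta)$.

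The only real difficulty is keeping the indices aligned in the two position counts above. The essential observation, which ensures that $k$ depends only on $(a,d)$ and not on the specific pair $(\xi,\theta)$, is that the gluing blocks $c$ and $c'$ are determined entirely by the ordered pairs $(d_1,a_{-n})$ and $(a_n,d_2)$ respectively; once this is noted, the rest of the argument is purely combinatorial index bookkeeping.
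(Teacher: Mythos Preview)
Your proof is correct and follows essentially the same approach as the paper's: observe that the gluing words depend only on $d$, set $\zeta=\sigma^{-k}H_a(\xi)$, check $\zeta\in W^u_{\text{loc}}(\xi)$, and verify the bracket identity by a direct index count. Your value $k=n+|c|+1$ is in fact the right one; the paper writes $k=|e|+n$, which appears to be an off-by-one slip, but this is immaterial since the proposition only asserts the existence of such a $k$.
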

\begin{proof}
    Let $d = (d_0,d_1)\in \mathbb{A}^2$ be a finite word and observe that for every $\theta\in C_0(d)$ we can take $c_{\theta_0,a_{-n}} = c_{d_0,a_{-n}} =:e$ and $c_{a_{n},\theta_1} = c_{a_{n}, d_1} =: f$ and so for every $\theta\in C_0(d)$, $H_a(\theta) = (\theta^-,\, e,\, a^*,\, f,\, \theta^+)$. Now fix $\xi\in C_0(d)$ and take $\theta \in W^s_{\text{loc}}(\xi)\cap C_0(d)$, i.e., the sequence $\theta$ satisfies that $\xi_0 = \theta_0$ and $\xi^+ = \theta^+$. Then, taking $k := |e| + n$ and $\zeta := \sigma^{-k}(H_a(\xi))$, we have
\begin{align*}
    \sigma^{k}\circ h^u_{\xi, \zeta}(\theta)
    = \sigma^{k}(\theta^{-*},\zeta^+) 
    = \sigma^{k}(\theta^{-*}, e, a, f,\xi^+)
    = \sigma^{k}(\theta^{-*}, e, a, f, \theta^+)
    = H_a(\theta).
\end{align*} 
\end{proof}

The next proposition is the main result of this section and it is the mechanism that will allow us to prove the main result of this work.
\begin{proposition} \label{prop:150223.1}
    Let $g: \Sigma \to \R$ be a continuous function satisfying that 
    \begin{align} \label{SDH2}
        \sup_{\theta \in \Sigma\backslash C_{-n}(a)} g(\theta)
        < \inf_{\theta \in C_{-n}(a)} g(\theta),
    \end{align}
    for some admissible word $a\in \mathbb{A}^{2n+1}$ with $n > 3\, \displaystyle\max_{(\alpha,\beta)}\, |c_{\alpha,\beta}| + 1$. Then, there is an open set $\mathcal{O} \subset \Sigma_{1/3}(a^c)$ and $j_0 \in \N$ such that
    \begin{align*}
        \sigma^{j_0}\circ H_a(\mathcal{O})\subset C_{-n}(a)
        \quad
        \text{and}
        \quad
        g\circ\sigma^{j_0}\circ H_a(\mathcal{O})\subset L(\sigma, g).
    \end{align*}
\end{proposition}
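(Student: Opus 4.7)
The plan is to take $j_0 = 0$, since the inclusion $H_a(\mathcal{O}) \subset C_{-n}(a)$ is then automatic from the definition of $H_a$: by construction the word $a$ occupies positions $-n$ through $n$ in $H_a(\theta)$. The set $\mathcal{O}$ will be a non-empty open subset of $\Sigma_{1/3}(a^c)$, chosen as the intersection of $\Sigma_{1/3}(a^c)$ with finitely many cylinders ruling out the combinatorial coincidences described below. The real content is then: for each $\theta \in \mathcal{O}$, writing $y := H_a(\theta) \in C_{-n}(a)$, one must build a point $x \in \Sigma$ with $\limsup_j g(\sigma^j(x)) = g(y)$, which places $g(y)$ in $L(\sigma, g)$.

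To construct $x$ I would telescope. Pick a rapidly increasing sequence $N_k \to \infty$ and let $B_k := y_{-N_k} \cdots y_{N_k}$; each $B_k$ contains the inserted copy of $a$ at its centre. The hypothesis $n > 3\max|c_{\alpha,\beta}|+1$ makes $n$ large enough so that Proposition \ref{trans_sigma} applies and $(\sigma, \Sigma_{1/3}(a^c))$ is a transitive subshift of finite type, which supplies admissible connector words $u_k$ inside $\Sigma_{1/3}(a^c)$ joining the last symbol of $B_k$ to the first symbol of $B_{k+1}$ (for $N_k$ larger than $n + \max(|e|,|f|)$ the endpoints of $B_k$ lie in the $\theta$-portion of $y$ and hence in the alphabet of $\Sigma_{1/3}(a^c)$). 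Concatenating produces an admissible bi-infinite $x = \cdots u_{-1} B_0 u_0 B_1 u_1 B_2 \cdots$. Denoting by $p_k$ the position in $x$ of the central $a_0$ of $B_k$, the shift $\sigma^{p_k}(x)$ agrees with $y$ on $[-N_k, N_k]$ and so converges to $y$ in the product topology; by continuity of $g$, $g(\sigma^{p_k}(x)) \to g(y)$, yielding $\limsup_j g(\sigma^j(x)) \geq g(y)$.

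The main obstacle is the matching upper bound, which I would reduce to the combinatorial claim that $\sigma^j(x) \in C_{-n}(a)$ forces $j = p_k$ for some $k$. This splits into three regions. Inside a connector $u_k$, by definition $u_k$ contains no subword of $a$ of length $\geq |a|/3$, so in particular no full copy of $a$. For an occurrence straddling a boundary $B_k \mid u_k$, write $a = a'a''$ with $|a'|+|a''|=|a|$; pigeonhole gives $\max(|a'|,|a''|) \geq |a|/2 > |a|/3$, so the longer half is a $\geq |a|/3$-long subword of $a$ lying entirely in either the $\theta$-end of $B_k$ or the start of $u_k$, contradicting the $\Sigma_{1/3}(a^c)$ condition on whichever side. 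Inside a $B_k$ but off-centre, the hypothesis $n > 3\max|c_{\alpha,\beta}|+1$ gives $|e|,|f| < |a|/3$, so any off-centre occurrence of $a$ in $y$ at shift $j \neq 0$ would either force a $\geq |a|/3$-long subword of $a$ into the $\theta$-portion of $y$ (contradicting $\theta \in \Sigma_{1/3}(a^c)$), or would only be compatible with a shifted-self copy of $a$ that additionally pins the first few symbols of $\theta^+$ (or the last few of $\theta^-$) to prescribed values; these latter finitely many ``periodic-coincidence'' patterns are precisely the cylinder restrictions cut out of $\Sigma_{1/3}(a^c)$ to define $\mathcal{O}$. Once the claim is established, the separation hypothesis \eqref{SDH2} gives $g(\sigma^j(x)) \leq \sup_{\Sigma \setminus C_{-n}(a)} g < \inf_{C_{-n}(a)} g \leq g(y)$ for all $j \notin \{p_k\}$, while $g(\sigma^{p_k}(x)) \to g(y)$, so $\limsup_j g(\sigma^j(x)) = g(y)$ and $g(y) \in L(\sigma, g)$, completing the proof.
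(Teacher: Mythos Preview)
Your approach has a genuine gap in the combinatorial claim that underlies the choice $j_0 = 0$. You assert that any off-centre occurrence of $a$ in $y = H_a(\theta)$ either forces a long subword of $a$ into the $\theta$-portion of $y$, or pins finitely many symbols of $\theta^\pm$ to prescribed values which you can then exclude by shrinking $\mathcal{O}$. But for small shifts $0 < j \leq |f|$ (and symmetrically $-|e| \leq j < 0$) the window $[j-n,\,j+n]$ lies entirely inside the block $(e,a,f)$ and never touches $\theta$ at all: the occurrence condition reads $a_{i+j} = a_i$ for $-n \leq i \leq n-j$ together with $f_1\cdots f_j = a_{n-j+1}\cdots a_n$, a condition only on $a$ and the fixed gluing word $f = c_{a_n,d_1}$. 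If $a$ happens to have a short period $j$ and $f$ begins with the matching suffix of $a$, then $\sigma^j(y) \in C_{-n}(a)$ for \emph{every} $\theta \in C_0(d)$, and no cylinder restriction on $\theta$ removes it. Your telescoped $x$ then carries copies of $a$ centred at $p_k + j$ as well as at $p_k$; since $g(\sigma^{p_k+j}(x)) \to g(\sigma^j(y))$, the limsup along the orbit of $x$ is $\max_j g(\sigma^j(y))$ over these finitely many forced shifts, and nothing guarantees this maximum is attained at $j = 0$.

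The paper's proof faces the same obstruction and resolves it without trying to pin down $j_0$ in advance: it shows that for each $\theta$ the limsup along the orbit of an auxiliary point $\widetilde{H}(\theta)$ equals $g(\sigma^{j}(H_a(\theta)))$ for \emph{some} $j$ in the fixed finite window $[-2n-|e|,\,2n+|f|]$, and then applies Baire's theorem to the resulting finite closed cover $\{\Sigma_j\}_j$ of $\Sigma_{1/3}(a^c)\cap C_0(d)$ to extract a single $j_0$ valid on a nonempty open $\mathcal{O}$. Your telescoping is close in spirit to the paper's $\widetilde{H}$, and your treatment of the connector and boundary regions is fine; but the Baire step is precisely what absorbs the possibility that the maximising shift is nonzero and varies with $\theta$, and your argument does not supply an alternative.
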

\begin{proof}
    Write $a = (a_{-n},\ldots, a_0,\ldots, a_n)$ and  $d = (d_0,d_1)$ such that $C_0(d)\cap \Sigma_{1/3}(a^c) \neq$~$ \varnothing$. Also denote by $e, f$ the gluing words $c_{d_0,a_{-n}}, c_{a_{n}, d_1}$ of size $l$ and $m$, respectively. We define the auxiliary map $\widetilde{H}: C_0(d) \subset \Sigma \to \Sigma$ given by
    \begin{align*}
        \widetilde{H}(\theta)= (\ldots \theta_{-2}, \theta_{-1}, \theta_0, \tau_0^*, c_1, \tau_1, c_2, \ldots, c_k, \tau_k, c_{k+1}, \ldots),
    \end{align*}
    where $\tau_k = (\theta_{-k}, \ldots, \theta_0, e, a, f, \theta_1, \ldots \theta_{k+1})$, the $*$ indicates the zero-th position of $\widetilde{H}(\theta)$ located in $a_0$ which is the center of $a$ inside $\tau_0$ and  $c_k = c_{\theta_k, \theta_{-k}}$, $k\geq 1$. Observe that both $\tau_{k-1}$ and $c_k$ depend on $\theta$, for every $k \geq 1$.

    Let $l(k)$ be the position in $\widetilde{H}(\theta)$ of the center $a_0$ in $a$ in the center of $\tau_k$ and notice that for every $\theta = (\theta_n)_{n\in \Z}\in C_0(d)\cap \Sigma_{1/3}(a^c)$,
    \begin{align}\label{eq:070323.10}
        &[\sigma^{n + m}(\sigma^{l(k)}\circ \tilde{H}(\theta))]^+
        = (\theta_1, \theta_2, \ldots, \theta_{k+1}, c_{k+1}, \theta_{-k-1},\ldots)
        \quad
        \text{and}\nonumber\\
        &[\sigma^{- (n + l)}(\sigma^{l(k)}\circ\tilde{H}(\theta))]^-
        = (\ldots, \theta_k, c_k, \theta_{-k}, \ldots, \theta_{-1}, \theta_0).
    \end{align}
    In particular, if $j\in \{-2n - l,\ldots, 2n + m\}$, we have that
    \begin{align}\label{eq:070323.13}
        \lim_{k\to\infty}\sigma^j(\sigma^{l(k)}\circ \tilde{H}(\theta)) = \sigma^j\circ H_a(\theta).
    \end{align}
    Take $k > 4n$ and consider the intervals of integers
    \begin{align*}
        I_k^- := - (n + l) + [-k ,\, -n - 1]
        \quad
        \text{and}
        \quad
        I_k^+ := (n + m) + [n + 1,\, k+1 + |c_{k+1}|].
    \end{align*}
    Also write $I_k := [-(n + l) - k,\, (n + m) + k + 1 + |c_{k+1}|]$ and notice that $I_k^{\pm}\subset I_k$ and $I_k\backslash (I_k^-\cup I_k^+) = [-2n - l, \ldots, 2n + m]$ which is independent of $k$, see the Figure \ref{fig:sim}.
    \vspace{0.2cm}
     \begin{figure}[ht]
        \centering
        \includegraphics[scale=0.85]{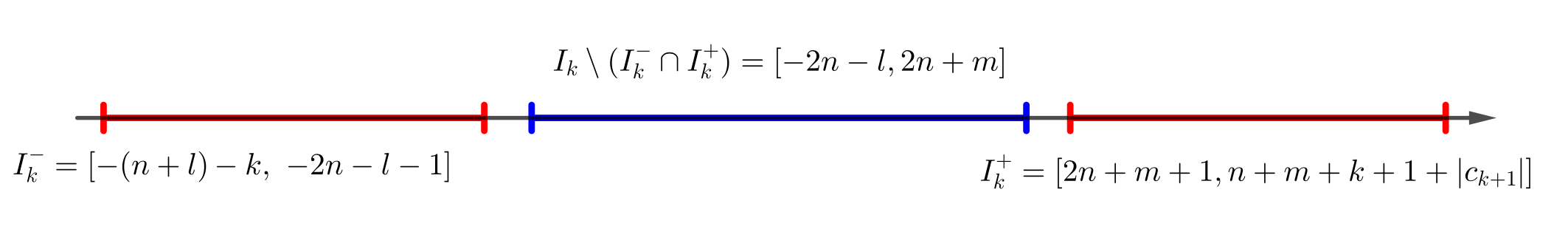}
        \caption{Interval of integers $I_n$}
        \label{fig:sim}
    \end{figure} 
    Moreover, we may decompose $\N$ as the disjoint union $\cup_{k\geq 1} l(k) + I_k$ and so,
    \begin{align}\label{eq:070323.11}
        \limsup_{j\to\infty} g(\sigma^j\circ\tilde{H}(\theta)) =
        \limsup_{k\to\infty}\sup_{j\in I_k}\, g(\sigma^{j+l(k)}(\tilde{H}(\theta))).
    \end{align}
    Take $j\in I_k^-\cup I_k^+$. Directly from the expressions in \eqref{eq:070323.10}, we can see that the interval of size $2n$ around the zero position of the sequence $\sigma^j(\sigma^{l(k)}\circ \tilde{H}(\theta))$ share a sub-word of size at least $\lfloor |a|/3 \rfloor$ with the word $(\theta_{-k},\ldots, \theta_0,\ldots, \theta_k)$ (here we are using the assumption $n > 3\, \max_{(\alpha,\beta)}\, |c_{\alpha,\beta}| + 1$). Since $\theta\in \Sigma_{1/3}(a^c)$, this implies that $\sigma^j(\sigma^{l(k)}\circ \tilde{H}(\theta)) \in \Lambda\backslash C_{-n}(a)$. Thus, by assumption in \eqref{SDH2},
    \begin{align*}
        \sup_{j\in I_k^-\cup I_k^+}\, g(\sigma^j(\sigma^{l(k)}\circ\tilde{H}(\theta))) < g(\sigma^{l(k)}\circ \tilde{H}(\theta))),
    \end{align*}
    which implies that
    \begin{align}\label{eq:070323.12}
        \sup_{j\in I_k}\, g(\sigma^j(\sigma^{l(k)}\circ\tilde{H}(\theta)))
        = \max_{j\in I_k\backslash (I_k^-\cup I_k^+)}\, g(\sigma^{j + l(k)}\circ\tilde{H}(\theta)).
    \end{align}
    Therefore, by equations \eqref{eq:070323.11} and \eqref{eq:070323.12}, there exists $j_0\in I_k\backslash (I_k^-\cup I_k^+)$ and a subset sequence $k_i\to\infty$ such that
    \begin{align}\label{eq:070323.14}
        \limsup_{i\to\infty} g(\sigma^i\circ \tilde{H}(\theta))
        = \lim_{k\to\infty} g(\sigma^{j_0 + l(k_i)}\circ \tilde{H}(\theta))
        = g( \sigma^{j_0}\circ H_a(\theta)),
    \end{align}
    where the last inequality is justified using equation \eqref{eq:070323.13}.
    For each $j\in I_k\backslash (I_k^-\cup I_k^+)$ consider the compact set
    \begin{align*}
        \Sigma_j = \left\{
            \theta \in \Sigma_{1/3}(a^c)\cap C_0(d)\colon\,
            \limsup_{k\to\infty} g(\sigma^k\circ \tilde{H}(\theta))
            = g( \sigma^{j}\circ H_a(\theta))
        \right\}.
    \end{align*}
    Observe that for every $\theta \in \Sigma_j$, $g(\sigma^j\circ H_a(\theta))\in L(\sigma, g)$. By equation \eqref{eq:070323.14}, we see that
    \begin{align*}
        \Sigma_{1/3}(a^c) \cap C_0(d)
        = \bigcup_{j\in I_k\backslash(I_k^-\cup I_k^+)}\, \Sigma_j.
    \end{align*}
    Then, by Baire's Theorem, there exists $j_0\in I_k\backslash (I_k^-\cup I_k^+)$ such that $\Sigma_{j_0}$ has no empty interior. So, there exists open set $\mathcal{O}\subset \Sigma_{j_0}\subset \Sigma_{1/3}(a^c)$ such that
    \begin{align*}
        g\circ\sigma^{j_0}\circ H_a(\mathcal{O})\subset L(\sigma,\, g).
    \end{align*}

    Finally, by definition of $\tilde{H}$, we have $\sigma^k(\tilde{H}(\theta))$ belongs to $a$ for infinitely many $k>0$. Thus, by \eqref{SDH2}, for every $\theta \in \Sigma_{1/3}(a^c)\cap C_0(d)$,
    \begin{align*}
        \displaystyle\limsup_{k\to\infty} g(\sigma^k(\tH(\theta))) > \sup_{\zeta\in \Lambda\backslash C_{-n}(a)} g(\zeta).
    \end{align*}
    However, for every $\theta \in \mathcal{O}\subset \Sigma_{j_0}$, $\displaystyle \limsup_{k\to\infty} g(\sigma^k\circ \tilde{H}(\theta)) = g( \sigma^{j_0}\circ H_a(\theta))$ which implies that $\sigma^{j_0}(H_a(\theta))\in C_{-n}(a)$.
\end{proof}
\section{Choice of the data set}
\label{section:choiceData}

In this section we provide the generic data set that will be used in the proof of Theorem \ref{thm:Main}. We start give conditions to produce the horseshoe with good geometric properties such as good regularity of the holonomy maps and the existence of regular Cantor sets inside of the horseshoe. Later we construct a set of $C^1$ real functions such that when restricted to the horseshoe satisfies the conditions to a large Lagrange spectrum presented in Proposition \ref{prop:150223.1}.

\subsection{Choice of the horseshoe}
\label{subsec:choiceHorseshoe}
Let $\varphi: M\to M$ be a $C^r$-diffeomorphism with $r\geq 1$. Consider $p\in M$ a hyperbolic periodic point and denote by $E^s(p), E^u(p)$, respectively the stable and unstable space of $p$. We write $u = \dim E^u(p)$ and $s = \dim E^s(p)$, $u+s = d = \dim M$. Additionally, assume that $p$ admits a \emph{transversal homoclinic intersection} in a point $q_0$, meaning that the stable and unstable manifold of $p$ intersect each other transversely at $q_0\in M$. For simplicity, assume that $p$ is a fixed point of $\varphi$.

In what follows, we list conditions on the diffeomorphism $\varphi$ in order to obtain a geometrical horseshoe with intrinsic differentiable unstable holonomies.

\paragraph{Condition C1.} The fixed point $p$ has real and simple spectrum, i.e., the eigenvalues of $D\varphi(p)$ are real with distinct absolute values.

Write the spectrum of $D\varphi(p)$ as $\spec(D\varphi(p)) = \{\sigma_u,\ldots,\sigma_1,\lambda_1,\ldots,\lambda_s\}$, where $\sigma_j, \lambda_i \in \R$ for every $i=1,\ldots, u$, $j=1,\ldots, s$ and satisfies
\begin{align*}
    |\sigma_u|> |\sigma_{u-1}|>\ldots > |\sigma_1| > 1 > |\lambda_1|>|\lambda_2| > \ldots > |\lambda_s|.
\end{align*}
Consider the decomposition of $E^s(p)$ into $D\varphi$-invariant subspaces $E^w(p)\oplus E^{ss}(p)$, where $E^{w}(p)$ is the one dimensional eigenspace associated with the weakest contracting eigenvalue $\lambda_1$ and $E^{ss}(p)$ is the eigenspace associated with the strongest contracting eigenvalues $\lambda_2,\ldots, \lambda_s$. The spaces $E^{w}(p)$ and $E^{ss}(p)$ are called respectively the weak stable and the strong stable space associated with the point fixed $p$.

Using the Strong Stable Manifold Theorem \cite{Sh2013}, there exists a $\varphi$-invariant submanifold of the stable manifold of $p$, tangent to $E^{ss}(p)$ and denoted by $W^{ss}(p)$. This submanifold is called strong stable manifold at $p$.

\paragraph{Condition C2.} The intersection point $q_0$ does not belong to $W^{ss}(p)$.

\begin{remark}
\normalfont
  In the case that $q_0\in W^{ss}(p)$ we cannot expect in general a good regularity of the unstable lamination. Indeed, there are simple examples of diffeomorphisms $\varphi$ with such property in which the unstable holonomies are not even Lipschitz. See \cite[Exemple 3.1]{PaVi1994}.
\end{remark}

\paragraph{Condition C3.} Assume that $\varphi$ is linear around $p$. More specifically, there are $C^3$-coordinates in a neighborhood $U_0$ of $p$, say $(\xi, \zeta_1, \zeta^{ss}) = (\xi_1,\ldots,\xi_u,\zeta_1,\ldots,\zeta_s)\in \R^u\times\R\times\R^{s-1}$, such that
\begin{enumerate}
    \item $E^u(p) = [\zeta=0]$, $E^s(p) = [\xi = 0]$, $E^w(p) = [\xi = \zeta^{ss} = 0]$ and $E^{ss} = [\xi = \zeta_1 = 0]$;
    
    \item $T := D\varphi(p)$ is diagonal in blocks with respect to the decomposition $\R^u\times\R\times\R^{s-1}$;
    
    \item In these coordinates, $\varphi = T$ in $U_0$.
\end{enumerate}

Fix $N_0\in \N$ such that $q_1 := \varphi^{-N_0}(q_0)\in U_0$.
\paragraph{Condition C4.} Assume that
\begin{align}
\label{eq:coneField+regularity}
    D\varphi^{N_0}(q_1)\left( E^u(p)\oplus E^w(p)\right)\, \cap\, E^{ss}(p) = \{0\}.
\end{align}

\begin{proposition}
\label{prop:genericityConditionsC1-C4}
    Among the $C^{\infty}$-diffeomorphisms with a hyperbolic periodic point admitting a transverse homoclinic intersection, there exists a $C^1$-open and $C^{\infty}$-dense set of diffeomorphisms satisfying the conditions C1-C4 above.
\end{proposition}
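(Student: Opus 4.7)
The plan is to handle the four conditions sequentially, verifying each is $C^1$-open and $C^{\infty}$-dense, and to localize the supports of the successive perturbations in pairwise disjoint regions so all four can be realized simultaneously.

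First, condition C1 is a spectral condition on $T = D\varphi(p)$. The set of matrices in $\GL_d(\R)$ with real, simple spectrum of pairwise distinct absolute values is open and dense; a $C^{\infty}$-small perturbation supported in a small neighborhood of $p$ ensures C1, and the condition is $C^1$-open by continuity of eigenvalues. Second, once C1 holds, the strong-stable manifold $W^{ss}(p)$ is a well-defined $\varphi$-invariant submanifold of positive codimension inside $W^s(p)$ varying $C^1$-continuously with $\varphi$, so $q_0 \notin W^{ss}(p)$ is a $C^1$-open condition. Density for C2 follows by a small perturbation supported near a backward iterate $\varphi^{-k}(q_0)$ lying in a fundamental domain of $W^s(p)\setminus\{p\}$ disjoint from $p$; such a perturbation displaces the continuation of the homoclinic orbit transversally off $W^{ss}(p)$, which is possible precisely because $W^{ss}(p)$ has positive codimension in $W^s(p)$.

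Third, I would obtain C3 by invoking Proposition \ref{prop:DiffResonantCondition}: the set $\mathcal{L}^{\infty}(M)$ is $C^1$-open and $C^{\infty}$-dense in $\mathcal{H}^{\infty}(M)$, and for $\varphi \in \mathcal{L}^{\infty}(M)$ Sell's theorem (Theorem \ref{thm:sellTheorem}) furnishes a $C^3$-chart $h : U \to \R^d$ centered at $p$ with $h \circ \varphi = T \circ h$ on $U \cap \varphi^{-1}(U)$. Post-composing $h$ with a fixed linear change of coordinates (which exists because of C1) places $E^u(p)$, $E^w(p)$, $E^{ss}(p)$ into the prescribed coordinate subspaces and puts $T$ in the required block-diagonal form; choosing $U_0 \subset U$ gives exactly the chart demanded by C3. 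Finally, C4 is a transversality condition between the $(u+1)$-dimensional subspace $D\varphi^{N_0}(q_1)(E^u(p)\oplus E^w(p))$ and the $(s-1)$-dimensional subspace $E^{ss}(p)$ of $T_{q_0}M$, whose dimensions sum to $d$; trivial intersection is therefore a $C^1$-open and $C^{\infty}$-dense condition on $\varphi$, achieved by a localized perturbation near $q_1$ (chosen outside $U_0$ by possibly enlarging $N_0$).

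The main obstacle is coordinating the four perturbations so that each preserves the previously achieved conditions. Since the C3 step only furnishes a chart (not a modification of $\varphi$ outside $\mathcal{L}^{\infty}(M)$), and the supports of the perturbations producing C1, C2, and C4 can be taken in regions disjoint from $U_0$ and from each other (by localizing C2 and C4 near distinct iterates of $q_0$, and C1 near $p$ before the linearization chart is invoked), the intersection of the four $C^1$-open, $C^{\infty}$-dense sets is again $C^1$-open and $C^{\infty}$-dense in the set of $C^{\infty}$-diffeomorphisms admitting a hyperbolic periodic point with a transverse homoclinic intersection.
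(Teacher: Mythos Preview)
Your handling of C2--C4 matches the paper's approach and is correct. The gap lies in C1.

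You assert that the set of matrices in $\GL_d(\R)$ with real, simple spectrum of pairwise distinct absolute values is dense. This is false: the set of matrices in $\GL_d(\R)$ having at least one non-real eigenvalue is open, so real spectrum cannot be arranged by an arbitrarily small matrix perturbation. Concretely, any matrix close to $\left(\begin{smallmatrix} \lambda\cos\theta & -\lambda\sin\theta \\ \lambda\sin\theta & \lambda\cos\theta \end{smallmatrix}\right)$ with $\theta \notin \pi\Z$ retains a pair of non-real eigenvalues. A hyperbolic saddle in dimension $d \geq 3$ may have, for instance, one real expanding eigenvalue together with a contracting complex-conjugate pair, and this configuration persists under any $C^\infty$-small perturbation of $\varphi$ supported near $p$. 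Hence a local perturbation at $p$ does \emph{not} suffice to achieve C1 in general.

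The paper does not attempt an elementary argument here; it cites nontrivial external results (\cite{AvCrWi2021}, with a more elementary treatment in \cite{BeMo2023}) for the $C^\infty$-density of C1. The point is that the periodic point witnessing C1--C4 need not be the hyperbolic continuation of the original $p$: the transverse homoclinic intersection produces a horseshoe, and one can pass (after a $C^\infty$-small perturbation) to another periodic point inside it whose return derivative has real simple spectrum and which itself admits a transverse homoclinic intersection. Your argument, as written, fixes $p$ and therefore does not establish density.
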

\begin{proof}
    The possibility to perturb a diffeomorphism in the $C^{\infty}$-topology to guarantee condition C1 was established in \cite{AvCrWi2021} (see \cite{BeMo2023} for a more elementary proof). The condition C2 is achieved through a $C^{\infty}$ local perturbation around the transversal homoclinic point $q_0$. Notice that conditions C1 and C2 are $C^1$-open inside $\diff^{\infty}(M)$.

    We denote by $\mathcal{T}_{1,2}^{\infty}(M)$ the set of diffeomorphisms with a hyperbolic periodic point of saddle-type admitting a transverse homoclinic intersection satisfying the conditions C1 and C2. Using Proposition \ref{prop:DiffResonantCondition} we can see that there exists a subset $\mathcal{L}^{\infty}(M)$ which is $C^1$-open and $C^{\infty}$-dense in the set of diffeomorphisms admitting a hyperbolic periodic point of saddle-type $H^{\infty}(M)$, such that if $\varphi\in \mathcal{L}^{\infty}(M)\cap \mathcal{T}_{1,2}^{\infty}(M)$, then $\varphi$ satisfies condition C3. It is also direct that condition C4 can be obtained for a $C^1$-open and $C^{\infty}$-dense subset of $\mathcal{L}^{\infty}(M)\cap \mathcal{T}_{1,2}^{\infty}(M)$. Taking the intersection of  these sets above described satisfying conditions C1-C4, we have the result.
\end{proof}

Let $\varphi:M\to M$ be a $C^r$-diffeomorphism, $r\geq 2$, satisfying the conditions C1-C4. Consider $N_0\in \N$ such that $q_1=\varphi^{-N_0}(q_0)\in W^u_{\eta_0}(p)$ for some $\eta_0$ small enough satisfying that $W^u_{\eta_0}(p)$ is contained in the neighborhood $U_0$ of $p$ given by condition C3.

For each $\delta >0$, consider the following compact neighborhood of $p$ and~$q_0$, 
\begin{align*}
    V_{\delta} = \left\{
        (\xi,\zeta)\in \R^d\colon\,
        \norm{\xi}\leq \delta,\, \norm{\zeta}\leq \rho
    \right\}\subset U_0,
\end{align*}
where $\rho>0$ is chosen (and then fixed) in such a way that $q_0\in V_{\delta}$. Set $n_0 = n_0(\delta) = \inf\left\{n\geq 1\colon\, q_1\in {\rm int } \,\varphi^n(V_{\delta})\right\}$.

Notice that $n_0$ increases as $\delta$ goes to zero. If $\psi := \varphi^{N_0 + n_0}$, then, for $\delta$ small enough, $V_\delta\cap \psi(V_{\delta})$ has two connected components, one containing $p$ and the other containing $q_0$, respectively denoted by $V_1(\delta)$ and $V_2(\delta)$. Consider the $\psi$-maximal invariant set inside of $V_{\delta}$, i.e.,
\begin{align*}
    \Lambda_0 = \Lambda_0(\delta) := \bigcap_{j\in \Z}\, \psi^j\left( V_{\delta} \right).
\end{align*}
This is a hyperbolic, $\psi$-invariant set and $\varphi^{N_0+n_0}|_{\Lambda_0}$ is conjugated to a full shift of two symbols $\sigma:\{0,1\}^{\Z}\to \{0,1\}^{\Z}$. We also denote by
\begin{align*}
    \Lambda = \bigcup_{j=0}^{N_0 + n_0 -1}\, \varphi^{-j}\left(\Lambda_0\right),
\end{align*}
the $\varphi$-invariant hyperbolic set (conjugated to a subshift of finite type) defined through $\Lambda_0$. The horseshoe $\Lambda$ has a symbolic structure as described in Proposition \ref{prop:symbolicStructureForTowerHorseshoe}. We denote by $E^s$ and $E^u$ the stable and unstable bundle of $\varphi$ at $\Lambda$ (for more details about the classical construction of the geometric horseshoe as above see~\cite[Section 7.4]{Ro1998}).

In the next proposition we gather a few properties of the hyperbolic set $\Lambda$ which will be essential in this work.
\begin{proposition}
\label{prop:niceHorseshoeProperties}
    There exist $\delta_0>0$ and $\alpha \in (0,1]$ uniform in a $C^2$-neighborhood of $\varphi$ such that for every $\delta\in (0,\delta_0)$, the above constructed horseshoe  $\Lambda$ satisfies the following:
    \begin{enumerate}
        \item \label{prop:horseshoeProperties-item1} The stable bundle $E^s$ of $\Lambda$ admits a dominated splitting of the form $E^s = E^w\oplus E^{ss}$, with $\dim E^w= 1$.

        \item \label{prop:horseshoeProperties-item2} There exist $C>0$ and a $(C,\alpha)$-H\"older continuous map $A:W^u_{loc}(\Lambda)\times W^u_{loc}(\Lambda)\rightarrow \M_{s-1, u+1}(\R)$ such that
        \begin{align*}
            \norm{A(x,z)}\leq C
            \quand
            (x-z)_{ss} = A(x,z)(x-z)_{uw},
        \end{align*}
        for all $x,z\in W^u_{loc}(\Lambda)$. Moreover, $A(p,p) = 0$.

        \item \label{prop:horseshoeProperties-item3} For every $x,y\in \Lambda$, $y\in W^u_{\delta}(x)$ the holonomies $H^u_{x,y} : W^s_{\delta}(x) \cap \Lambda\to W^s_{\delta}(y) \cap \Lambda$ are $IC^{1+\alpha}$. Moreover, $\Delta H_{x,y}^u(x,x)\, E^w(x) = E^w(y)$.
    \end{enumerate}
\end{proposition}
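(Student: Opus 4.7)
The plan is to establish the three items in order, exploiting the linearization of $\varphi$ on $U_0$ (condition C3), the transversality C4 of the translated unstable-weak subspace with $E^{ss}$, and the strict spectral gaps coming from condition C1.

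For item \ref{prop:horseshoeProperties-item1}, I would use an invariant cone field argument. Because $T=D\varphi(p)$ is diagonal with $|\lambda_1|>|\lambda_2|>\cdots>|\lambda_s|$, inside $E^s(p)$ there is a strict dominated splitting $E^w(p)\oplus E^{ss}(p)$ with ratio $|\lambda_2/\lambda_1|<1$, and $T$ preserves and strictly contracts a cone $\mathcal{C}^{ss}$ around $E^{ss}(p)$ inside $E^s(p)$. Since $\varphi\equiv T$ on $U_0$ by C3, this cone is $D\varphi$-invariant along every orbit segment contained in $U_0$. Between two visits to $U_0$ an orbit of $\Lambda$ spends exactly $N_0$ iterates near $q_1$, and by C4 the finite-order derivative $D\varphi^{N_0}(q_1)$ (together with a neighborhood version for nearby points in $\Lambda$) sends $\mathcal{C}^{ss}$ into a slightly larger cone still strictly transverse to $E^u\oplus E^w$. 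Taking $\delta$ small forces $n_0(\delta)$ large, and the contraction accumulated in the linearized piece dominates the fixed distortion coming from $D\varphi^{N_0}$. By Newhouse's cone criterion this produces a $D\varphi$-invariant dominated splitting $E^s=E^w\oplus E^{ss}$ with $\dim E^w=1$; robustness in the $C^2$-topology is immediate because cone conditions are $C^1$-open.

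For item \ref{prop:horseshoeProperties-item2}, I would exploit the graph structure of local unstable leaves. In $V_1(\delta)$, linearity and $W^u(p)=\{\zeta=0\}$ give $W^u_{\text{loc}}(\Lambda)\cap V_1(\delta)\subset\{\zeta=0\}$, so $A\equiv 0$ there and in particular $A(p,p)=0$. In $V_2(\delta)$ a local unstable leaf is the image under $\psi=\varphi^{N_0+n_0}$ of a leaf in $V_1(\delta)$; since the $T^{n_0}$-piece stays in $\{\zeta=0\}$, the image is a $C^{r-1}$ graph over $D\varphi^{N_0}(q_1)(E^u(p)\oplus E^w(p))$ with values in $E^{ss}(p)$, uniformly bounded slope by C4. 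Iterating this description with the dominated splitting of item~\ref{prop:horseshoeProperties-item1} and the strong stable cone $\mathcal{C}^{ss}$, one writes $W^u_{\text{loc}}(\Lambda)$ globally as a uniformly Lipschitz graph, which yields the uniform bound $\|A(x,z)\|\leq C$. The $\alpha$-Hölder continuity of $A$ in $(x,z)$ is then the standard Hölder regularity of the strong-stable distribution over a hyperbolic set with a bunched dominated splitting (Hirsch--Pugh--Shub invariant-section theorem applied inside $E^s$), with exponent $\alpha$ determined by the bunching ratio between $E^w$ and $E^{ss}$; uniformity in a $C^2$-neighborhood follows because all the relevant constants depend only on the spectrum of $T$ and on a finite jet of $\varphi$ at $q_1$.

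For item \ref{prop:horseshoeProperties-item3}, I would derive intrinsic $C^{1+\alpha}$ regularity of $H^u_{x,y}$ from item~\ref{prop:horseshoeProperties-item2} combined with the invariance \eqref{eq:holonomyInvariance}. Using the linearized coordinates and the Lipschitz graph representation of $W^u_{\text{loc}}(\Lambda)$, the holonomy $H^u_{x,y}(z)=[y,z]_\delta$ can be written explicitly as an affine map of the $E^w$-coordinate plus a correction governed by the graph map $A$. Iterating \eqref{eq:holonomyInvariance} backwards, one reduces regularity of $H^u_{x,y}$ at scale $\delta$ to regularity at scale $\|D\varphi^{-n}|_{E^u}\|\delta$ of $H^u_{\varphi^{-n}(x),\varphi^{-n}(y)}$, and the Hölder estimate on $A$ from item~\ref{prop:horseshoeProperties-item2} together with the $E^w$-vs-$E^{ss}$ bunching yields summable $C^{1+\alpha}$ distortions. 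The conclusion $\Delta H^u_{x,y}(x,x)E^w(x)=E^w(y)$ follows because $DH^u_{x,y}$ sends $E^s$ into $E^s$ by construction, and domination forbids sending a weak vector into a direction with a nonzero $E^{ss}$-component—else iterated forward the image would fall outside the strong stable cone, contradicting the fact that $H^u_{x,y}$ is the identity on unstable holonomies and thus conjugates the contraction along stable leaves.

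The main obstacle will be item~\ref{prop:horseshoeProperties-item3}: in higher dimension the classical two-dimensional trick of extending the stable lamination to a $C^{1+\alpha}$ foliation is unavailable, so the $C^{1+\alpha}$ regularity has to be built entirely within the horseshoe by the bootstrap above, and the exponent $\alpha$ and constant $C$ must be traced carefully through the bunching estimate so as to be uniform in a $C^2$-neighborhood of $\varphi$. This is essentially the scheme of \cite{PaVi1994}, which I would follow closely.
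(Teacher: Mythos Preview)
Your overall strategy matches the paper's: cone criterion for item~\ref{prop:horseshoeProperties-item1}, and for items~\ref{prop:horseshoeProperties-item2}--\ref{prop:horseshoeProperties-item3} follow the scheme of \cite{PaVi1994}. Two points deserve correction.

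For item~\ref{prop:horseshoeProperties-item1}, building the cone ``inside $E^s(p)$'' is awkward because $E^s(x)$ varies with $x\in\Lambda$, so a constant cone in a fixed subspace does not restrict to a cone field inside each $E^s(x)$. The paper instead works in the ambient $\R^d$ (available by C3): it takes a constant cone $C_{u,w}$ around $E^u(p)\oplus E^w(p)$ and its complement $C_{ss}$, shows $D\psi$-invariance of $C_{u,w}$ and $D\psi^{-1}$-invariance of $C_{ss}$ over the return domain (using C4 for the transition piece and large $n_0$ for the linear piece), obtains a dominated splitting $T\Lambda_0=F\oplus E$, and only then sets $E^w:=F\cap E^s$. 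Your key mechanism (large $n_0(\delta)$ beating the fixed $D\varphi^{N_0}$ distortion) is the right one, but the cone should live in $\R^d$.

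For item~\ref{prop:horseshoeProperties-item3}, the claim $\Delta H^u_{x,y}(x,x)\,E^w(x)=E^w(y)$ is where your sketch is weakest. Your ``iterated forward'' argument goes in the wrong direction: since $y\in W^u_\delta(x)$, the points $\varphi^n(x),\varphi^n(y)$ separate and nothing useful is gained. The paper argues backward via the invariance $H^u_{x,y}=\varphi^n\circ H_n\circ\varphi^{-n}$ with $H_n:=H^u_{\varphi^{-n}(x),\varphi^{-n}(y)}$. The crucial ingredient, extracted from the \cite{PaVi1994} proof of $IC^{1+\alpha}$ regularity, is the quantitative estimate
\[
\norm{\Delta H_n(\varphi^{-n}(x),\varphi^{-n}(x))-I}\leq C\,d(\varphi^{-n}(x),\varphi^{-n}(y))\to 0.
\]
Combined with continuity of $E^{ss}$ and the uniform angle bound between $E^w$ and $E^{ss}$ (domination), this gives that $\Delta H_n$ maps $E^w(\varphi^{-n}(x))$ to a direction bounded away from $E^{ss}(\varphi^{-n}(y))$; applying $D\varphi^n$ and using that $E^w$ is the attractor for the forward action inside $E^s$ yields the conclusion. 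Your domination heuristic is morally right, but without the $\Delta H_n\to I$ estimate there is no starting point for the argument.
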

\begin{proof}
    To prove the item \ref{prop:horseshoeProperties-item1} we will use the cone field criterion (see~\cite[Theorem 2.6]{CrPo2015}) building first an $\psi$-invariant cone over $(R_1(\delta)\cup R_2(\delta))\cap V_{\delta}\cap \psi(V_{\delta})$, where $R_i(\delta) := \psi^{-1}(V_i(\delta))$, $i=1,2$. 
    
    Notice that for every $x\in R_1(\delta)\cap V_{\delta}\cap \psi(V_{\delta})$, $\varphi^j(x)\in U_0$, for every $0\leq j\leq N_0+n_0$. This implies in particular that for every $x\in R_1(\delta)\cap V_{\delta}\cap \psi(V_{\delta})$, $D\psi(x) = T^{N_0+n_0}\, x$. Also observe that $\varphi^{-N_0}(V_2(\delta)) = \varphi^{n_0}(R_2(\delta))$ is a small neighborhood of $q_1$ whose diameter decreases to $0$ when $\delta$ goes to $0$. Moreover, for every $x\in R_2(\delta)\cap V_{\delta}\cap \psi(V_{\delta})$ and every $0\leq j \leq n_0$, $\varphi^j(x)\in U_0$ and so $D\psi(x) = D\varphi^{N_0}(y)\, T^{n_0}\, x$, where $y = \varphi^{n_0}(x)$ belongs to the small neighborhood $\varphi^{-N_0}(V_2(\delta))$ of $q_1$.

    By condition C4, decreasing $\delta$ if necessary, there exists $\varepsilon>0$ such that, for every $x\in \varphi^{-N_0}(V_2(\delta))$
    \begin{align}\label{eq:angleDistanceConeField}
        \measuredangle\left(
            D\varphi^{N_0}(x) (E^u(p)\oplus E^w(p)),\, E^{ss}(p)
        \right) > \varepsilon > 0.
    \end{align}

    For each number $\gamma\in [0, \pi/2]$, denote by $C_{u,w}(\gamma)\subset \R^d$ the constant cone field over $(R_1(\delta)\cup R_2(\delta)) \cap V_{\delta}\cap \psi(V_{\delta})$ given by the set of directions $\R^d$ whose angle with respect to $E^u(p)\oplus E^w(p)$ is at most $\gamma$. Let $\tau>0$ such that for every $x\in \varphi^{-N_0}(V_2(\delta))$,
    \begin{align}\label{eq:DphiConeAction}
        D\varphi^{N_0}(x)\, C_{u,w}(\tau) \subset \interior C_{u,w}\left(
            \pi/2 - \varepsilon/2
        \right).
    \end{align}
    Notice that this is achievable using the inequality \eqref{eq:angleDistanceConeField}. Diminishing $\delta$ once more, we may also assume that
    \begin{align}\label{eq:TConeContraction}
        T^{n_0}\, C_{u,w}(\pi/2-\varepsilon/2) \subset \interior C_{u,w}(\tau).
    \end{align}
    This is due to the fact that any direction which is not inside of $E^{ss}(p)$ converges to $E^u(p)\oplus E^w(p)$ through $T$ iterations and so the cone $C_{u,w}(\pi/2-\varepsilon/2)$ will be contracted in the $E^u(p)\oplus E^w(p)$ direction.

    For any $y\in R_1(\delta)\cap V_{\delta}\cap \psi(V_{\delta})$ and $0\leq j \leq N_0+n_0$, $\varphi^j(y)\in U_0$ and so,
    \begin{align*}
        D\psi(y)\, C_{u,w}(\pi/2-\varepsilon/2) \subset \interior C_{u,w}(\pi/2-\varepsilon/2).
    \end{align*}
    Now, for $y\in R_2(\delta)\cap V_{\delta}\cap \psi(V_{\delta})$, $\varphi^{n_0}(y)\in \varphi^{-N_0}(V_2(\delta))$ and so by equations \eqref{eq:DphiConeAction} and \eqref{eq:TConeContraction}, we have
    \begin{align*}
        D\psi(y)\, C_{u,w}(\pi/2-\varepsilon/2)
        &= D\varphi^{N_0}(\varphi^{n_0}(y))\, T^{n_0}\, C_{u,w}(\pi/2-\varepsilon/2)\\
        &\subset \interior D\varphi^{N_0}(\varphi^{n_0}(y))\, C_{u,w}(\tau)\\
        &\subset \interior C_{u,w}(\pi/2-\varepsilon/2).
    \end{align*}
    Thus the constant cone field $C_{u,w}(\pi/2-\varepsilon/2)$ over $(R_1(\delta)\cup R_2(\delta))\cap V_{\delta}\cap \psi(V_{\delta})$ is invariant by the action of $D\psi$.

    A similar argument shows that the complementary cone field of $C_{u,w}(\pi/2-\varepsilon/2)$, namely $C_{ss}(\varepsilon/2)$, defined as the set of directions in $\R^d$ whose angle with the space $E^{ss}$ does not exceed $\varepsilon/2$, is invariant by the backwards action $D\psi^{-1}$ over $V_1(\delta)\cup V_2(\delta)$. However, in this case we need an additional modification on $\delta_0$ guaranteeing
    \begin{align*}
        T^{-n_0}\, C_{ss}(\pi/2 - \tau) \subset \interior C_{ss}(\varepsilon/2),
    \end{align*}
    where $C_{ss}(\pi/2 - \tau)$ is defined analogously as before.

    Thus, over $\cup_{i,j}V_i(\delta)\cap R_j(\delta)$ we have a pair of cone fields $C_{u,w}(\pi/2-\varepsilon/2)$ and $C_{ss}(\varepsilon/2)$ respectively invariant by $D\psi$ and $D\psi^{-1}$. Therefore, by the cone field criterion we have a dominated splitting over $\Lambda_0 = \cap_{k\in \Z} \psi^k(\cup_{i,j}V_i(\delta)\cap R_j(\delta))$ given by $T\Lambda_0 = F\oplus E$, where for every $x\in \Lambda_0$,
    \begin{align*}
        F(x) = \bigcap_{j\geq 0}D\psi^j(\psi^{-j}(x))\, C_{uw}(\pi/2-\varepsilon/2)
        \quad
        \text{and}
        \quad
        E(x) = \bigcap_{j\geq 0}D\psi^{-j}(\psi^j(x))\, C_{ss}(\varepsilon/2).
    \end{align*}
    Notice that $F(p) = E^u(p)\oplus E^w(p)$ and $E(p) = E^{ss}(p)$. Also, $\dim F = u + 1$ and $\dim E = s-1$. Moreover, $E\subset E^s$. Set
    \begin{align*}
        E^w := F\cap E^s
        \quad
        E^{ss} := E \subset E^s.
    \end{align*}
    These are $D\psi$-invariant continuous bundles and they give the dominated splitting $E^u\oplus E^w \oplus E^{ss}$ of $T\Lambda_0$. The dominated splitting for $\Lambda$ is obtained through iterations by $\varphi$ of the subspaces on the above decomposition.   This concludes the proof of item \ref{prop:horseshoeProperties-item1}.

    For a proof of item \ref{prop:horseshoeProperties-item2} see \cite[Proposition 3.2]{PaVi1994}. The regularity of the holonomies $H^u_{x,y}:W^s_{\delta}(x)\cap \Lambda\to W^u_{\delta}(y)\cap \Lambda$ expressed in item \ref{prop:horseshoeProperties-item3} is obtained in \cite[Proposition 3.5]{PaVi1994}. This proof also gives that there exists $C>0$, uniform for $\Delta H^u_{x,y}$ in a compact neighborhood, such that
    \begin{align}
        \label{eq:distanceToIdentityIDH}
            \sup_{z\in W^s_{\delta}(x)\cap \Lambda}\norm{
                \Delta H^u_{x,y}(z,z) - I 
            }\leq C\, d(x,y).
    \end{align}
    We briefly sketch the required adaptations in the argument from \cite{PaVi1994} to obtain inequality \eqref{eq:distanceToIdentityIDH}. Following carefully the steps of the proof of \cite[Proposition 3.5]{PaVi1994} we see that after choosing appropriate coordinates, we can choose a intrinsic derivative to $H^u_{x,y}$, in any pair of points $z_1,z_2\in W^s_{\delta}(x)\cap \Lambda$,  to be a solution $\Gamma(t;z_1,z_2)$ of a initial value problem
     \begin{equation*}
        \begin{cases}
         \frac{d}{dt}\Gamma(t;z_1,z_2) = \Delta H_t(g(t;z_1), g(t;z_2))\Gamma(t;z_1,z_2) \\
         \Gamma(0,z_1,z_2) = I.
        \end{cases}
    \end{equation*}
    (keeping the same notation as used in \cite{PaVi1994}) at the time $\xi = d(x,y)$ (in \cite{PaVi1994} the coordinates are chosen such that the transversal sections have distance one, but we could choose such coordinates preserving the distance between the sections inside the manifold $M$). Therefore, the inequality \eqref{eq:distanceToIdentityIDH} follows from compacity and the mean value theorem applied to the difference $\norm{\Gamma(\xi;z,z) - \Gamma(0;z,z)}$, for ${z\in W^s_{\delta}(x)\cap \Lambda}$.
    
    Now we prove that the intrinsic derivative of the holonomy preserves the weak stable bundle. For every $n\geq 1$, write $H_n := H^u_{\varphi^{-n}(x),\, \varphi^{-n}(y)}$ for the unstable holonomy connecting $\varphi^{-n}(x)$ to $\varphi^{-n}(y)$. By the invariance of the holonomy \eqref{eq:holonomyInvariance}, we may write, for every $n\geq 1$, $H := H^u_{x,y} = \varphi^n\circ H_n\circ \varphi^{-n}$. Thus,
    \begin{align}
    \label{eq:holPreservingWeak0}
        \Delta H(x,x) = \Delta\varphi^n\left(
            H_n(\varphi^{-n}(x)), H_n(\varphi^{-n}(x))
        \right)\,
        \Delta H_n(\varphi^{-n}(x),\varphi^{-n}(x))\,
        \Delta\varphi^{-n}(x,x).
    \end{align}
    Consider $v\in E^w(x)$ and denote by $v_n = \Delta\varphi^{-n}(x,x)\, v$ and $w_n = \Delta H_n(\varphi^{-n}(x),\varphi^{-n}(x))\, v_n$. Notice that $v_n\in E^w(\varphi^{-n}(x))\subset E^s(\varphi^{-n}(x))$ and $w_n\in E^s(\varphi^{-n}(y))$. Since $y\in W^u_{\delta}(x)$, we have that $d(\varphi^{-n}(x),\, \varphi^{-n}(y))\to 0$ as $n\to \infty$ and so, 
        \begin{align}\label{eq:holPreservingWeak1}
            d(E^{ss}(\varphi^{-n}(y)),\, E^{ss}(\varphi^{-n}(x))) \to 0.
        \end{align}
    Furthermore, by inequality (\ref{eq:distanceToIdentityIDH}), there exists a constant $C>0$ such that
    \begin{align} \label{eq:holPreservingWeak3}
        \norm{
                \Delta H_n(\varphi^{-n}(x),\,\varphi^{-n}(x)) - I
            } \leq\, C\, d(\varphi^{-n}(x),\, \varphi^{-n}(y)).
    \end{align}
    Using the fact that the splitting $E^s = E^w\oplus E^{ss}$ is dominated, we have that there exists $C_1>0$ such that
    \begin{align}\label{eq:holPreservingWeak2}
        \measuredangle\left(
            E^w(\varphi^{-n}(x)),\, E^{ss}(\varphi^{-n}(x))
        \right)\geq C_1 > 0,
    \end{align}
    for every $n\geq 0$. Hence, by \eqref{eq:holPreservingWeak1}, \eqref{eq:holPreservingWeak3} and \eqref{eq:holPreservingWeak2}, we have there exist $C_2>0$ and $n_1\in\N$ such that for every $n\geq n_1$, we have
    \begin{align*}
        \measuredangle\left(
            w_n,\,
            E^{ss}(\varphi^{-n}(y))
        \right)\geq C_2 > 0.
    \end{align*}    
    Since $E^w$ is an attractor to $D\varphi$-action, we obtain that
    \begin{align*}
        \measuredangle\left(
            \Delta\varphi^n\left(
            H_n(\varphi^{-n}(x)), H_n(\varphi^{-n}(x))
        \right)\, w_n,\,
        E^w(y)
        \right)\to 0,
    \end{align*}
    as $n\to \infty$. Therefore, by equation \eqref{eq:holPreservingWeak0}, $\Delta H(x,x)\, v\in E^w(y)$. This finishes the proof of the item \ref{prop:horseshoeProperties-item3} and the proposition.
\end{proof}

\subsection{Construction of the regular Cantor set}
\label{subsection:cantor}

Let $\tLambda$ be a subhorseshoe of $T\Lambda$ containing $p$. Observe that $\tilde{\Lambda}$ has a dominated splitting of the form $E^u\oplus E^w\oplus E^{ss}$ which is given by the restriction of the splitting on $T\Lambda$. Moreover, the holonomies on $\tilde{\Lambda}$, being the restriction of the holonomies on $\Lambda$, are also intrinsically $C^{1+\alpha}$. We also may restrict the map $A: W^u(\Lambda)\times W^u(\Lambda)\to \M_{s-1, u+1}(\R)$ to $W^u(\tilde{\Lambda})\times W^u(\tilde{\Lambda})$ keeping the same properties as described in item \ref{prop:horseshoeProperties-item2} of Proposition \ref{prop:niceHorseshoeProperties}.

Consider the $\psi$-invariant set, $\tilde{\Lambda}_0 := \tilde{\Lambda}\cap \Lambda_0$ and notice that $\tilde{\Lambda} = \cup_{j=0}^{N_0+n_0 - 1}\, \varphi^{-j}(\tilde{\Lambda}_0)$. Let $\tilde{V}_0$ be a small neighborhood of $p$ where we have local product structure for $\psi$. In particular, there exists $\eta_1>0$ small such that for every $x\in \tilde{\Lambda}_0\cap\tilde{V}_0$ we have that $W^u_{\eta_1}(x)\cap W_{\eta_1}^s(p)$ is a single point contained in $\tilde{\Lambda}_0$. Define the projection $\pi_w:\tilde{\Lambda}_0 \cap \tilde{V}_0\cap W^s_{\text{loc}}(p)\to \R$ given by
\begin{align*}
    \pi_w(\xi_u,\ldots, \xi_1, \zeta_1,\ldots, \zeta_s) = \zeta_1.
\end{align*}
Notice that $\pi_w$ is a homeomorphism with its image $K^w:= \pi_w(\tilde{\Lambda}_0\cap \tilde{V}_0\cap W^s_{\text{loc}}(p))$. Indeed, by item \ref{prop:horseshoeProperties-item2} of Proposition \ref{prop:niceHorseshoeProperties}, for every $x,y\in \tilde{\Lambda}_0\cap \tilde{V}_0\cap W^s_{\text{loc}}(p)$,
\begin{align*}
    \norm{x-y} 
    &= \norm{
        (0_u, (x-y)_w, (x-y)_{ss})
    }\\
    &= \norm{
        (0_u, \pi_w(x) - \pi_w(y), A(x,y)\, (0_u, \pi_w(x) - \pi_w(y)))
    }\\
    &\leq (1+C)|\pi_w(x) - \pi_w(y)|.
\end{align*}
In particular, this shows that $\pi_w^{-1}$ is a Lipschitz function from $K^w$ to $\tilde{\Lambda}_0\cap  \tilde{V}_0\cap W^s_{\text{loc}}(p)$. Furthermore, we can define an intrinsic derivative to $\pi_w^{-1}$ in the following way: for any \linebreak $t,s \in K^w$ set
\begin{align}\label{eq:IDerivativeWeakPiInverse}
    \Delta\pi_w^{-1}(t,s):= \left(
        1_w,\, A(\pi_w^{-1}(t),\, \pi_w^{-1}(s))\, (0_u,\, 1_w)
    \right) \in \M_{s,w}(\R).
\end{align}
Thus,
\begin{align*}
    \Delta\pi_w^{-1}(t,s)\, (t-s)
    &=  \left(
        t-s,\, A(\pi_w^{-1}(t),\, \pi_w^{-1}(s))\, (0_u,\, t-s)
    \right)\\
    &= \left(
        t-s, A(\pi_w^{-1}(t),\, \pi_w^{-1}(s))\, (\pi_w^{-1}(t) - \pi_w^{-1}(s))_{uw}
    \right)\\
    &= (t-s, (\pi_w^{-1}(t) - \pi_w^{-1}(s))_{ss})
    = \pi_w^{-1}(t) - \pi_w^{-1}(s).
\end{align*}

Since $\tLambda_0$ is a horseshoe for $\psi$, using an appropriate coding for $\psi|_{\tLambda_0}$, we identify the elements of $\tLambda_0$ and $\psi|_{\tLambda_0}$, respectively,  with sequences $(\theta_n)_{n\in \Z}\in \mathbb{A}^{\Z}$, for some finite alphabet $\mathbb{A}$, and the shift map with the set of admissible sequences (see Subsection \ref{subsection:symbolic dynamics}). For convenience, we represent the fixed point $p$ using the symbolic notation writing $p = (\bar{0})$. When describing a point $x\in \tLambda_0$ as a concatenation of finite words on $\mathbb{A}$,
\begin{align*}
    x = (\ldots, b_{-n+1},\ldots, b_{-1}, b_0^{\ast}, b_1,\ldots, b_{n-1},\ldots),
\end{align*}
the $\ast$ indicates the position of the zero-th coordinate of the sequence. In the above example, the zero-th coordinate of the point $x$ is located in one of the coordinates of the finite word $b_0\in \mathbb{A}^m$. We also use the notation $0^{k} := (0,\ldots, 0)\in \mathbb{A}^{k}$, for every $k\in \N$ and $\bar{0} = (0,0,\ldots)\in \mathbb{A}^{\N}$.  

Using this symbolic notation, we can write $\tLambda_0\cap \tilde{V}_0 = C_{-m+1}(0^{2m-1})$, with $m\in \N$. Take $l>2m$ and set 
\begin{align*}
    \hat{W}_l := \bigcup_{b:\, |b|=l-2m+1}\, \hat{W}_{b,l},
\end{align*}
where the union is being taken in finite ($\tLambda_0$-admissible for $\psi$) words $b\in \mathbb{A}^{l-2m+1}$ and
\begin{align*}
    \hat{W}_{b,l} := \bigcap_{j\geq 0}\, \psi^{jl}\left(
        C_{-m+1}(0^{2m-1})
    \right)\cap C_m(b\, \bar{0})
    \subset \tLambda_0
\end{align*}
So, for each $x\in \hat{W}_{b,l}$, there exist a sequence $(c_{-i})_{i> 0}\subset \mathbb{A}^{l-2m+1}$ such that
\begin{align*}
    x = (\ldots, c_{-i+1},\, 0^{2m-1},\, c_{-i+2}, \ldots, \, {0}^{2m-1}, \, c_{-2},\, {0}^{2m-1}, \, c_{-1},\, 0^{2m-1\ast},\, b,\, \bar{0}),
\end{align*}
where the $\ast$ is on the $0$ in the center of the word $0^{2m-1}$. Write $K^w_l := \pi_w\circ \psi^l(\hat{W}_l) = \cup_b\, K^w_{b,l}$, where $K^w_{b,l} := \pi_w\circ\psi^l(\hat{W}_{b,l})$.

\begin{proposition}
\label{prop:regularCantor}
    For every sufficiently large $l > 2m$, $K^w_l$ is a regular Cantor set. Moreover,
    \begin{align*}
        IT_0\, K^w_l =  \Delta\, \pi_w(0,0)\cdot E^w(p).
    \end{align*}
\end{proposition}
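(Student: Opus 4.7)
My plan is to realize $K^w_l$ as a regular Cantor set by constructing an explicit intrinsic $C^{1+\alpha}$ expanding Markov map on it, and then to read off the tangent space identity from the construction.

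I would first fix the symbolic bookkeeping. Each $x \in \hat W_{b,l}$ is encoded by an admissible sequence $(b;\, b_2, b_3, \ldots)$ with $b_i \in \mathbb{A}^{l-2m+1}$, where $b$ is the ``forward'' word and the $b_i$'s are the past blocks between the $0^{2m-1}$ spacers. After $\psi^l$ is applied, $b$ moves into the past, $\psi^l(\hat W_{b,l})$ is contained in $W^s_{\text{loc}}(p) \cap \tilde V_0$ for $l$ large, and $\pi_w$ is a homeomorphism onto $K^w_{b,l}$. So $K^w_l = \bigcup_b K^w_{b,l}$, with the pieces pairwise disjoint because distinct $b$'s sit in distinct cylinders.

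To define the expanding map, for $t = \pi_w(y) \in K^w_{b,l}$ with $y = \psi^l(x)$, $x \in \hat W_{b,l}$, the preimage $\psi^{-l}(y) = x$ lies in $C_{-m+1}(0^{2m-1})$ and hence in the same Markov rectangle as $p$, so the bracket $[p, x]$ is well defined. I set
\[
    \tau(t) \;:=\; \pi_w\bigl([\,p,\,\psi^{-l}(y)\,]\bigr).
\]
A direct symbolic computation shows that $[p, x]$ has forward $\bar 0$ and past $(b_2, b_3, \ldots)$, so $[p, x] \in \psi^l(\hat W_{b_2, l})$ and $\tau$ realizes the symbolic shift $(b, b_2, b_3, \ldots) \mapsto (b_2, b_3, \ldots)$. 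In particular $\tau(K^w_{b,l})$ is the union of $K^w_{b_2, l}$ over all $b_2$ with $(b, b_2)$ admissible, giving the Markov property. Moreover $\tau$ decomposes as $\pi_w \circ (x\mapsto [p, x]) \circ \psi^{-l} \circ \pi_w^{-1}$: here $\pi_w^{-1}$ is $IC^1$ via \eqref{eq:IDerivativeWeakPiInverse}, $\psi^{-l}$ is smooth, the bracket projection is the unstable holonomy onto $W^s_{\text{loc}}(p)$ and so $IC^{1+\alpha}$ by Proposition \ref{prop:niceHorseshoeProperties}, item \ref{prop:horseshoeProperties-item3}, and $\pi_w$ is linear; hence $\tau$ is $IC^{1+\alpha}$. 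The expansion is driven by $D\psi^{-l}\vert_{E^w(p)}$, which acts as multiplication by $\lambda_1^{-l(N_0+n_0)}$ with $|\lambda_1|<1$. Because the intrinsic derivative of the bracket projection at $p$ preserves $E^w$ (same proposition) and is uniformly close to the identity there by \eqref{eq:distanceToIdentityIDH}, and because $\pi_w$ just reads the $\zeta_1$-coordinate, I obtain $|\Delta \tau| > 1$ uniformly on each $K^w_{b,l}$ for $l$ large. Combined with the separation of the pieces and the fact that they have diameter $O(|\lambda_1|^{l(N_0+n_0)})$, this produces a cover by disjoint intervals $I_b \supset K^w_{b,l}$ with $K^w_l = \bigcap_{n\geq 0} \tau^{-n}(\bigcup_b I_b)$, proving that $K^w_l$ is a regular Cantor set.

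For the tangent space identity, the all-zero sequence lies in $\hat W_l$, so $0 = \pi_w(p) \in K^w_l$; as $K^w_l$ is a non-trivial Cantor set in $\R$, the identity is an $IC^1$ curve from $K^w_l$ to itself with constant intrinsic derivative $1$, giving $IT_0 K^w_l = \R$. On the other hand, $\pi_w$ is the linear projection $(\xi, \zeta_1, \zeta^{ss}) \mapsto \zeta_1$, so $\Delta \pi_w(0,0) \cdot E^w(p) = \R$ since $E^w(p) = \text{span}\{(0_u, 1, 0_{s-1})\}$, which establishes the identity. The main obstacle I anticipate is the uniform expansion in the previous step: the intrinsic derivative of the bracket projection is only known to be close to the identity up to an error of order the diameter of the base rectangle (via \eqref{eq:distanceToIdentityIDH}), so a careful quantitative argument, exploiting the uniformity in Proposition \ref{prop:niceHorseshoeProperties}, is needed to make sure the overall expansion factor exceeds $1$ uniformly over every piece.
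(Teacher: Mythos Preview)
Your approach is essentially the paper's: they define the identical expanding map $g_l = \pi_w \circ \pi_s \circ \psi^{-l} \circ \pi_w^{-1}$ (their $\pi_s$ is exactly your bracket projection $x \mapsto [p,x]$, which on each piece $\hat W_{b,l}$ sits in a single stable leaf and is therefore a bona fide unstable holonomy, so your appeal to Proposition~\ref{prop:niceHorseshoeProperties}\ref{prop:horseshoeProperties-item3} is legitimate), verify the Markov and expansion properties through the same symbolic computation, and for the tangent identity compute $\Delta\pi_w^{-1}(0,0) = (1_w,0_{ss}) \in E^w(p)$, which is equivalent to your observation that both sides equal~$\R$. The one technical step the paper makes explicit and you leave implicit is the passage from an $IC^{1+\alpha}$ map defined only on $K^w_l$ to a genuine $C^{1+\gamma}$ expanding map on covering intervals $I_b$ (as required by the definition of regular Cantor set); for this they invoke the Whitney-type extension result \cite[Lemma~4.4]{PaVi1994}, together with the disjointness of the convex hulls $\mathrm{Hull}(K^w_{b,l})$.
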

\begin{proof}
    Observe that, by the local product structure of $\tilde{V}_0\cap \tilde{\Lambda}_0$, $W^u_{\eta_1}(x)\cap W^s_{\text{loc}}(p)\subset \tLambda_0$ is a singleton, which we denote by $\{\pi_s(x)\}$. Using \cite[Proposition 4.3]{PaVi1994}, we see that $\pi_s:\tLambda_0\cap \tilde{V}_0\to W^s_{\text{loc}}(p)\cap\tLambda_0$ is an intrinsic $C^{1+\alpha}$ map.

    Now we define the $IC^{1+\alpha}$ map $g_l:K^w_l\to K^w_l$ by $g_l := \pi_w\circ \pi_s\circ \psi^{-l}\circ \pi_w^{-1}$ which gives the regular Cantor set structure on $K^w_l$. First, we need to guarantee that this is a well defined map, meaning that for every $t\in K^w_l$, $g_l(t)\in K^w_l$. Indeed, by definition $K^w_l$, $t = \pi_w\circ \psi^l(x)$ with 
    \begin{align*}
        x = (\ldots, c_{-i+1},\, 0^{2m-1},\, c_{-i+2}, \ldots, c_{-1},\, 0^{2m-1\ast},\, c_0,\, \bar{0}),
\end{align*}
    for a sequence of finite words $(c_{-i})_{i\geq 0} \in \mathbb{A}^{l-2m+1}$. Notice that by the local product structure we can also characterize $\pi_s(x)$ symbolically, 
    \begin{align*}
        \pi_s(x) = (\ldots, c_{-i+1},\, 0^{2m-1},\, c_{-i+2}, \ldots, c_{-1},\, 0^{2m-1\ast},\, \bar{0}) \in W^u_{\eta_1}(x)\cap W^s_{\text{loc}}(p).
    \end{align*}
    In particular, we can write
    \begin{align*}
        \pi_s(x) = \psi^l(\ldots, c_{-i},\, 0^{2m-1},\, c_{-i+1}, \ldots, c_{-2},\, 0^{2m-1\ast},\, c_{-1},\, 0^{2m-1},\,  \bar{0}),
    \end{align*}
    implying that $\pi_s(x) \in \psi^l(\hat{W}_{c_{-1}, l})$. So, $g_l(t) = \pi_w\circ \pi_s(x) \in K^w_{c_{-1}, l}\subset K^w_l$. Thus, the $g_l$ is a well defined map. 

    The restrictions $g_{b,l} = g_l|_{K^w_{b,l}}: K^w_{b,l}\to K_l^w$, for $b\in \mathbb{A}^{l-2m+1}$ admissible, are homeomorphisms. From the above computation, the injectivity is clear. To show that $g_{b,l}$ is onto, consider $s \in K^w_l$, then we can write
    \begin{align*}
        \pi_w^{-1}(s) &= \psi^l(\ldots, c_{-i+1},\, 0^{2m-1},\, c_{-i+2}, \ldots, c_{-1},\, 0^{2m-1\ast},\, c_0,\, \bar{0})\\
        &= (\ldots, c_{-i},\, 0^{2m-1},\, c_{-i+1}, \ldots, c_{0},\, 0^{2m-1\ast},\, \bar{0}) \in \psi^l(\hat{W}_l),
    \end{align*}
    for some sequence $(c_{-i})_{i\geq 0}$. Write $t = \pi_w\circ \psi^l(x)\in K^w_{b,l}$, where $x\in \hat{W}_{b,l}$ satisfies
    \begin{align*}
        x = (\ldots, c_{-i},\, 0^{2m-1},\, c_{-i+1}, \ldots, c_{0},\, 0^{2m-1\ast},\, b,\, \bar{0}).
    \end{align*}
    Then $\pi_s(x) = \pi_w^{-1}(s)$ and so $g_{b,l}(t) = \pi_w\circ\pi_s(x) = s$. The fact that, for each $b$, $g_{b,l}$ is bijective and continuous defined in a compact is enough to guarantee that $g_{b,l}$ is a homeomorphism.

    Furthermore, since $\pi_s$ is $IC^{1+\alpha}$, we have that for each $b$ admissible and $l$ sufficiently large, $g_{b,l}$ is a $IC^{1+\alpha}$ map satisfying that
    for every $t,s\in K^w_{b,l}$, $\norm{\Delta g_{b,l}(s,t)} \geq 2$. Moreover, $\text{Hull}(K^w_{b,l})\cap \text{Hull}(K^w_{b',l}) = \varnothing$, for every $b\neq b'$. In particular, using the extension result \cite[Lemma 4.4]{PaVi1994}, we see that $K^w_l$ is a $C^{1+\alpha}$ regular Cantor set.

    To finish the proof of the proposition, observe that by item \ref{prop:horseshoeProperties-item2} of Proposition \ref{prop:niceHorseshoeProperties}, we have that $A(p, p) = 0$ and so by equation \eqref{eq:IDerivativeWeakPiInverse},
    \begin{align*}
        \Delta \pi_w^{-1}(0,0) = (1_w,\, A(p,p)(0_u,1_w)) = (1_w, 0_{ss})\in E^w(p).
    \end{align*}
    Therefore, $\Delta \pi_w^{-1}(0,0)(IT_0\, K^w_l) = \text{span}\{(1_w,0_{ss})\} = E^w(p)$.
\end{proof}

\subsection{Choice of the potential}
\label{section:genericPotentials}
In this section, we construct the large set of potentials in $C^1(M;\mathbb{R})$ that will be used in the main theorem as functions in the definition of the dynamically defined Lagrange spectrum. We also deal with a general horseshoe $\Lambda$ for a diffeomorphism $\varphi\in \diff^r(M)$ containing a point $p\in\Lambda$ which is fixed by $\varphi$.

Let $\mathcal{P} = \{P(\alpha);\ \alpha\in \mathbb{A}\}$ be a Markov partition for the horseshoe $\Lambda$ associated with $\varphi$, and let $B=(B_{\alpha, \beta})_{\alpha, \beta \in \mathbb{A}}$ be the related transition matrix (see Subsection \ref{subsection:symbolic dynamics}).  For each finite word $a = (a_1,\ldots, a_{2n+1})\in \mathbb{A}^{2n+1}$, we consider the box
\begin{align*}
    P(a) := \bigcap_{i=-n}^{n}\varphi^{-i}(P(a_{i+n+1})).
\end{align*}
The collection of the sets $P(a)$ when $a$ varies over the set of admissible words in $\mathbb{A}^{2n+1}$ will be denoted by $\mathcal{P}^n$. Notice that $\mathcal{P}^n$ also forms a Markov partition for $\Lambda$.

For each $x\in \Lambda$ fix one unit vector $e^w(x)$ in $E^w(x)$ in such a way that $\Lambda\ni x\mapsto e^w(x)$ is continuous. Let $\mathcal{X}_{\varphi}$ be the set of $f\in C^1(M;\R)$ satisfying that there exist $n\in \N$ and an admissible word $a\in \mathbb{A}^{2n+1}$ such that
\begin{enumerate}
    \item $p\notin P(a)$;
    
    \item $\inf_{x \in P(a)} f(x) > \sup_{x \in \Lambda \backslash P(a)}f(x)$;
    
    \item $\inf_{x\in P(a)}|\nabla f(x)\cdot e^w(x)| > 0$;
\end{enumerate}

The next proposition is the main content of this section.
\begin{proposition}
\label{prop:genericityPotentials}
    The set $\mathcal{X}_{\varphi}$ is open and dense in $C^1(M;\R)$.
\end{proposition}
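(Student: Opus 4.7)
The plan is to handle openness and density separately.

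For \textbf{openness}, suppose $f\in \mathcal{X}_\varphi$ has a witness $(n,a)$. Condition~(1) is independent of $f$; condition~(2) is a strict inequality between $\inf_{P(a)}f$ and $\sup_{\Lambda\setminus P(a)}f$ of a continuous function on two disjoint compact subsets of $\Lambda$, hence $C^0$-stable; condition~(3) is a strict positive lower bound on the continuous function $x\mapsto|\nabla f(x)\cdot e^w(x)|$ on the compact set $P(a)$, hence $C^1$-stable. Therefore $\mathcal{X}_\varphi$ is $C^1$-open.

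For \textbf{density}, given $f\in C^1(M;\R)$ and $\varepsilon>0$, the plan is to construct $\tilde f$ with $\|\tilde f-f\|_{C^1}<\varepsilon$ in $\mathcal{X}_\varphi$. The key structural fact is that each atom $P(a)$ of the Markov partition is clopen in $\Lambda$: it is the image of a clopen cylinder in $\Sigma$ under the bi-Hölder conjugacy introduced in Subsection~\ref{subsection:markovPartition}, so $\dist_M(P(a),\Lambda\setminus P(a))>0$. My strategy is to arrange that $\tilde f|_\Lambda$ has a unique strict maximum at a suitable point $x_0\in \Lambda\setminus\{p\}$ satisfying $\nabla \tilde f(x_0)\cdot e^w(x_0)\neq 0$, and then to realize the witness $(n,a_n)$ for the unique $a_n$ with $x_0\in P(a_n)$ at $n$ sufficiently large.

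To build $\tilde f$, stack three $C^1$-small perturbations of $f$. First, remove $p$ from the maximizer set by subtracting a tiny bump at $p$ and break remaining ties by adding bumps of slightly distinct small heights, obtaining a unique strict maximizer $x_0^{(1)}\in \Lambda\setminus\{p\}$. Second, use a small bump to displace the maximum to a nearby point $x_0$ which is a one-sided endpoint in the weak direction of the local stable Cantor set (i.e., $\Lambda$ does not extend from $x_0$ in one sign along $e^w(x_0)$), using density of such endpoints coming from the Cantor-like structure of $\Lambda$ along stable manifolds. Third, add $\delta\cdot\rho(x)\langle v,x-x_0\rangle$ with $\rho$ a smooth cutoff equal to $1$ near $x_0$ supported away from $p$, and $v\in\R^d$ chosen so that $\langle v,e^w(x_0)\rangle\neq 0$ with the sign making $\tilde f$ increase in the direction along $e^w(x_0)$ where $\Lambda$ is absent; this preserves the maximum at $x_0$ for $\delta$ small while producing a nonzero weak derivative at $x_0$.

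Finally, choose $n$ large. Uniform hyperbolicity gives $\diam P(a_n)\to 0$. Conditions~(1) and~(3) follow from continuity and from $P(a_n)$ lying in a small neighborhood of $x_0\neq p$ where $\nabla\tilde f\cdot e^w$ remains nonzero. Condition~(2) follows from combining the clopen gap $\dist_M(P(a_n),\Lambda\setminus P(a_n))>0$, the strict-max property, and the sign-correct weak derivative. \textbf{The main obstacle} lies precisely in the compatibility of (2) and (3): at a point where $\Lambda$ extends in both signs of $e^w$, the max constraint forces the weak derivative to vanish, conflicting with (3); it also manifests quantitatively as the requirement that the drop $\tilde f(x_0)-\sup_{\Lambda\setminus P(a_n)}\tilde f$ dominate $\|\nabla\tilde f\|_\infty\cdot\diam P(a_n)$ as $n\to\infty$. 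Step two of the construction resolves the first horn of this obstacle by placing $x_0$ at a weak-direction endpoint, and the sign-correct weak derivative of step three ensures a strict linear drop of $\tilde f$ from $x_0$ along the directions in which $\Lambda$ actually extends, allowing the rate comparison to close.
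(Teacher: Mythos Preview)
Your openness argument is correct and matches the paper's.

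For density, your approach has a genuine gap. The claim at the end---that the nonzero weak derivative at $x_0$ yields a ``strict linear drop of $\tilde f$ from $x_0$ along the directions in which $\Lambda$ actually extends''---does not follow from your construction and is in general false. You only arranged an endpoint condition in the \emph{weak} direction; the horseshoe still accumulates on $x_0$ along the unstable manifold $W^u_\delta(x_0)\cap\Lambda$ (and, in dimension $\geq 3$, along $E^{ss}$ as well). Along those directions the maximum condition forces the corresponding components of $\nabla\tilde f(x_0)$ to vanish, so the drop of $\tilde f$ there is only quadratic. Consequently $\tilde f(x_0)-\sup_{\Lambda\setminus P(a_n)}\tilde f$ can be of order $(\text{gap}^u_n)^2$, while the weak derivative you installed forces $\tilde f(x_0)-\inf_{P(a_n)}\tilde f\gtrsim \delta\cdot r_n^w$; the required inequality fails for all large $n$ whenever the weak contraction satisfies $\lambda_1>\sigma_1^{-2}$, which is precisely the regime of a genuinely weak stable direction. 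Separately, the density (and even the definition) of ``one-sided weak-direction endpoints'' in a higher-dimensional stable slice $W^s_\delta(x)\cap\Lambda$ is asserted without proof.

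The paper circumvents the obstacle you flagged with a much simpler idea: do \emph{not} try to keep $x_0$ as the maximum. Since $P(a)$ is clopen in $\Lambda$, one sets $g=f+\varepsilon/2$ on $P(a)$ (adding also a small linear term $\tau\,x\cdot e^w(x_0)$ when $\nabla f(x_0)\cdot e^w(x_0)=0$), keeps $g=f$ on $\Lambda\setminus P(a)$, and extends to $M$ by a partition of unity. The constant bump immediately gives $\inf_{P(a)}g>f(x_0)\geq\sup_{\Lambda\setminus P(a)}g$, so condition~(2) holds with no rate comparison; the linear term handles condition~(3). No special position of $x_0$ is needed.
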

\begin{proof}
    Since the maps $f$, $\nabla f$ and $y \in \Lambda \mapsto e^w(y)$ are continuous, by compacity we see that the conditions 1-3 are $C^1$-open. Hence,  $\mathcal{X}_\varphi$ is open in $C^1(M;\mathbb{R})$. 

    It remains to show that $\mathcal{X}_\varphi$ is $C^1$-dense. For this purpose, fix $\varepsilon>0$ and consider $f \in C^1(M; \mathbb{R})$. Let $x_0 \in \Lambda$ be the maximal point of $f$ on $\Lambda$. Up to a small perturbation on $f$ we may assume that $x_0$ does not coincide with the fixed point $p\in \Lambda$.

    Now we divide the proof in two cases. In the first, we assume that $\nabla f(x_0)\cdot e^w(x_0) \neq 0$. Here, we can consider $n\in \N$ big enough and a finite admissible word $a\in \mathbb{A}^{2n+1}$ such that $p\notin P(a)$, $x_0\in P(a)$, and for every $x\in P(a)$,
    \begin{align} \label{Pot.Case1}
        \nabla f(x) \cdot e^w(x) \neq  0 \text{ and } f(x)> f(x_0)-\varepsilon/2.
    \end{align}
    Thus, using inequality \eqref{Pot.Case1} we can see that the function $g:\Lambda\to \R$ defined by
    \begin{align*}
        g(x) = \left\{
            \begin{array}{cc}
                f(x) + \varepsilon/2, & x\in P(a) \\
                f(x), & x\in \Lambda\backslash P(a),
            \end{array}
        \right.
    \end{align*}
    satisfies the conditions 2 and 3 in the definition of $\mathcal{X}_{\varphi}$ with $n$ and $P(a)$ given above. So, we can consider a partition of unity to extend $g$ to a $C^1$ function on $M$, which is $\varepsilon$-close to $f$ in the $C^1$ topology and $g\in \mathcal{X}_{\varphi}$. This concludes the $C^1$-density in this case.

    If $\nabla f(x_0)\cdot e^w(x_0) = 0$, consider first $n_0\in \N$ and a finite admissible word $b\in \mathbb{A}^{2n_0 + 1}$ such that the box $P(b)$ contains $x_0$, but does not contain $p$. Set,
    \begin{align}  \label{Pot.Case11}
        L: = \sup_{x \in P(b)}|x\cdot e^w(x_0)|,
        \quad
        N: = \sup_{x \in P(b)}\norm{\nabla f(x)}
        \quad
        \text{and}
        \quad
        0< \tau < \min\{\varepsilon, \varepsilon/8L\}.
    \end{align}
    Take $n\in \N$, $n\geq n_0$ and a finite admissible word $a\in \mathbb{A}^{2n+1}$ such that $P(a)\subset P(b)$, $x_0\in P(a)$ and for every $x\in P(a)$,
    \begin{align} \label{Pot.Case2}
        |\nabla f(x) \cdot e^w_{x_0}|
        <\dfrac{\tau}{4}, \;\; \norm{e^w(x)-e^w(x_0)}<\dfrac{\tau}{4(N+\tau)}   \text{ and } f(x)> f(x_0)-\dfrac{\varepsilon}{8}.
    \end{align}
    Define $g:\Lambda\to \R$ by
    \begin{align*}
        g(x) = \left\{
            \begin{array}{cc}
                f(x) + \varepsilon/2 + \tau\, x\cdot e^w(x_0), & x\in P(a) \\
                f(x), & x\in \Lambda\backslash P(a)
            \end{array}
        \right.,
    \end{align*}
    and as before extend $g$ to a $C^1$ real function from $M$. Notice that, by \eqref{Pot.Case11} and \eqref{Pot.Case2}, $g\in \mathcal{X}_{\varphi}$. Indeed, for every $x\in P(a)$,
    \begin{align*}
        g(x)
        = f(x)+\varepsilon/2 + \tau\, x\cdot e^w(x_0)
        > f(x_0) - \dfrac{\varepsilon}{8}+\dfrac{\varepsilon}{2}-\dfrac{\varepsilon}{8}
        = f(x_0) + \dfrac{\varepsilon}{4}
        > \sup_{y \in \Lambda\backslash P(a)} g(y).
    \end{align*}
    which proves condition 2. To see condition 3, notice that for every $x\in P(a)$,
    \begin{align*}
        \nabla g(x) \cdot e^w(x)
        & = (\nabla f(x) + \tau e^w(x_0))\cdot e^w(x)\\
        &= \nabla f(x) \cdot e^w(x_0) + \tau + ( \nabla f(x)+ \tau e^w(x_0) )\cdot( e^w(x) - e^w(x_0) ) \\
        & > -\dfrac{\tau}{4}+\tau - (\norm{\nabla f(x)}+ \tau) \norm{e^w(x) - e^w(x_0)}\\
        &>\dfrac{3}{4}\tau -(N+\tau)\dfrac{\tau}{4(N+\tau)}
        =\dfrac{\tau}{2}.
    \end{align*}
    Now we see that $g$ is a $\varepsilon$-perturbation of $f$ in the $C^1$-topology. But for every $x\in P(a)$,
    \begin{align*}
       & |f(x)-g(x)| 
       = |\varepsilon/2 + \tau\, x\cdot e^w(x_0)|
       < \varepsilon/2+\delta L
       < \varepsilon, \\
       & \norm{\nabla f(x)-\nabla g(x)}
       = \tau \norm{e^w(x_0)} = \tau
       < \varepsilon. 
    \end{align*}
    Therefore, $g\in \mathcal{X}_{\varphi}$ is $\varepsilon$-close to $f$ which conclude the proof of the density and the proposition.
\end{proof}

\subsection{Lower bound the dimension of the Lagrange spectrum}
Assume that $\dim(M)\geq 3$. Let $\varphi\in \diff^r(M)$, $r>2$, be a diffeomorphism admitting a point $q_0$ of transversal homoclinic intersection associated with a $\varphi$-periodic point $p\in M$ that we assume to be fixed. Additionally, assume that $\varphi$ satisfy the conditions C1-C4 described in the Subsection \ref{subsec:choiceHorseshoe}.

Following the construction in Subsection \ref{subsec:choiceHorseshoe}, we can build a horseshoe $\Lambda_0$ for $\varphi^{N_0 + n_0}$ which is conjugated to $(\sigma,\, \{0,1\}^{\Z})$. Moreover, the horseshoe $\Lambda = \cup_{j=0}^{N_0+n_0 - 1} \varphi^{-j}(\Lambda_0)$ for $\varphi$ is conjugated a subshift of finite type $(\sigma,\, \Lambda)$ and satisfies the properties in Theorem~\ref{prop:niceHorseshoeProperties}. Denote the conjugacy map between $(\varphi,\, \Lambda)$ and $(\sigma,\, \Sigma)$ by $\Phi:\Lambda\to\Sigma$.  

Let $\mathcal{X}_{\varphi}$ be the subset constructed in Subsection \ref{section:genericPotentials} associated to the diffeomorphism $\varphi$ and the horseshoe $\Lambda$. Take $f\in \mathcal{X}_{\varphi}$. So, there exist $n\in \N$ and a $\Lambda$-admissible word $a\in \mathbb{A}^{2n+1}$ such that
\begin{enumerate}
    \item $p\notin P(a)$;
    
    \item $\inf_{x \in P(a)} f(x) > \sup_{x \in \Lambda \backslash P(a)}f(x)$;
    
    \item $\inf_{x\in P(a)}|\nabla f(x)\cdot e^w(x)| > 0$.
\end{enumerate}

Consider the set $\Lambda_{1/3}(a^c):= \Phi^{-1}(\Sigma_{1/3}(a^c))$.
Since $\Lambda$ is conjugated to a subshift of finite type and $\varphi^{N_0+n_0}|_{\Lambda_0}$ is conjugated to a full shift of two symbols $\sigma:\{0,1\}^{\Z}\to \{0,1\}^{\Z}$, taking $k:=N_0 + n_0$ such that $k(2n+1)$ is odd we can apply Proposition \ref{prop:trans_sigma2} guaranteeing that for $n$ sufficiently large, $\Lambda_{1/3}(a^c)$ is a transitive subhorseshoe of $\Lambda$ (notice that conditions 1-3 for $f$ holds if we instead of $a$ consider some other word $b\in \mathbb{A}^{2n'+1}$, $n'>n$, such that $C_0(b)$ contain the maximal point of $f$ on $\Lambda$ and is a subset of $C_0(a)$, so up to change $a$ we can assume that $n$ is as big as we want)

Also let $K_p(\varphi) := \pi_w^{-1}(K^w_l)\subset \Lambda_{1/3}(a^c)$, where $K^w_l$ is a regular Cantor set constructed in Subsection \ref{subsection:cantor} associated to a subhorseshoe $\Lambda_{1/3}(a^c)$ and satisfying the conclusion of Proposition \ref{prop:regularCantor}. 
\begin{proposition}
\label{prop:lowerBoundHDLagrange}
    It holds that
    \begin{align*}
        \dim_H(L(\Lambda, f)) \geq \dim_H(K_p(\varphi)) > 0.
    \end{align*}
\end{proposition}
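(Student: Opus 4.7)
The plan is to transport $K_p(\varphi)$ into $L(\Lambda,f)$ via an explicit $IC^{1+\alpha}$ composition whose intrinsic derivative at $p$ is non-vanishing along the weak stable direction, and then use local self-similarity of the regular Cantor set $K^w_l$ to extract a bi-Lipschitz piece of full Hausdorff dimension. The positivity $\dim_H(K_p(\varphi))>0$ is immediate: $\pi_w$ is bi-Lipschitz between $K_p(\varphi)$ and the regular Cantor set $K^w_l$ by the estimate preceding \eqref{eq:IDerivativeWeakPiInverse}, and regular Cantor sets have positive Hausdorff dimension.

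Let $g:=f\circ \Phi^{-1}:\Sigma\to\R$. Condition (2) defining $\mathcal{X}_\varphi$ translates verbatim (via $\Phi$) into hypothesis \eqref{SDH2} of Proposition \ref{prop:150223.1}. Write $\theta_p:=\Phi(p)\in\Sigma_{1/3}(a^c)$ and fix the admissible $2$-cylinder $C_0(d)$ containing $\theta_p$, so $\Phi(K_p(\varphi))\subset C_0(d)$. Inspecting the proof of Proposition \ref{prop:150223.1}, I extract that for every $\theta\in \Sigma_{1/3}(a^c)\cap C_0(d)$ the Lagrange value $\limsup_k g(\sigma^k\tilde H(\theta))$ equals $\Theta(\theta):=\max_{j\in J_0}\Theta_j(\theta)$, where $\Theta_j:=g\circ \sigma^j\circ H_a$ and $J_0$ is a finite range independent of $\theta$; hence $\Theta$ is continuous on this domain and takes values in $L(\Lambda,f)$. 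Moreover, for every $j^*$ in the argmax $J_\theta:=\operatorname{argmax}_{j\in J_0}\Theta_j(\theta)$, the point $\sigma^{j^*}H_a(\theta)$ lies in $C_{-n}(a)$.

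For any fixed $j^*\in J_{\theta_p}$ I compute the intrinsic derivative of $\Theta_{j^*}\circ \pi_w^{-1}:K^w_l\to\R$ at $(0,0)$. By Proposition \ref{prop:boxMapHolonomy} together with \eqref{eq:090323.3}, the geometric map $\tilde H_a:=\Phi^{-1}\circ H_a\circ \Phi$ factors on $W^s_\delta(p)\cap P(d)\cap \Lambda$ as $\varphi^k\circ H^u_{p,y_0}$, with $k=k(a,d)$ and $y_0:=\varphi^{-k}\tilde H_a(p)\in W^u_\delta(p)\cap \Lambda$. So $\Theta_{j^*}\circ \pi_w^{-1}=f\circ\varphi^{j^*+k}\circ H^u_{p,y_0}\circ \pi_w^{-1}$, and its intrinsic derivative at $(0,0)$ is the composition of four non-zero linear actions: (i) $\Delta\pi_w^{-1}(0,0)\cdot 1 \in E^w(p)\setminus\{0\}$ by Proposition \ref{prop:regularCantor} (using $A(p,p)=0$); (ii) $\Delta H^u_{p,y_0}(p,p)$ sends $E^w(p)$ isomorphically onto $E^w(y_0)$ by Proposition \ref{prop:niceHorseshoeProperties}(\ref{prop:horseshoeProperties-item3}); (iii) $D\varphi^{j^*+k}$ sends $E^w(y_0)$ onto $E^w(z^*)$, where $z^*:=\varphi^{j^*+k}(y_0)=\varphi^{j^*}\tilde H_a(p)\in P(a)$, the containment being the second extraction from Proposition \ref{prop:150223.1} applied at $\theta=\theta_p$; (iv) $\nabla f(z^*)\cdot e^w(z^*)\neq 0$ by condition (3) defining $\mathcal{X}_\varphi$. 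Chaining (i)--(iv) gives $\Delta(\Theta_{j^*}\circ \pi_w^{-1})(0,0)\cdot 1\neq 0$.

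By $\alpha$-H\"older continuity of intrinsic derivatives, $|\Delta(\Theta_{j^*}\circ\pi_w^{-1})(t,s)\cdot 1|$ remains bounded away from zero for $t,s$ in a small interval $U_w\ni 0$, so $\Theta_{j^*}\circ \pi_w^{-1}$ is bi-Lipschitz on $K^w_l\cap U_w$. Setting $U:=\pi_w^{-1}(U_w)$ and using local self-similarity of $K^w_l$ together with the bi-Lipschitzness of $\pi_w$, $\dim_H(K_p(\varphi)\cap U)=\dim_H(K_p(\varphi))$. Shrinking $U$ so that $J_\theta\subset J_{\theta_p}$ throughout $\Phi(K_p(\varphi))\cap U$ (feasible by continuity of the finitely many $\Theta_j$ and the strict inequalities $\Theta_j(\theta_p)<\Theta(\theta_p)$ for $j\notin J_{\theta_p}$), the finite decomposition $\Phi(K_p(\varphi))\cap U=\bigcup_{j\in J_{\theta_p}}\tilde K_j$, with $\tilde K_j:=\{\theta:j\in J_\theta\}$, forces some $j^*\in J_{\theta_p}$ with $\dim_H(\tilde K_{j^*})=\dim_H(K_p(\varphi))$; since the four-step derivative computation works for every $j^*\in J_{\theta_p}$, I may use this optimized $j^*$. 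On $\tilde K_{j^*}$, $\Theta_{j^*}$ agrees with $\Theta$, takes values in $L(\Lambda,f)$, and is bi-Lipschitz, yielding $\dim_H(L(\Lambda,f))\geq \dim_H(\Theta_{j^*}(\tilde K_{j^*}))=\dim_H(K_p(\varphi))$. The hardest step is expected to be the four-factor derivative computation at $p$: each link is clean individually, but the assembly depends crucially on the intrinsic tangent of $K_p(\varphi)$ at $p$ being parallel to $E^w(p)$ (via $A(p,p)=0$), on $H^u$ and $D\varphi$ both preserving the bundle $E^w$, and on $z^*$ landing inside $P(a)$ so that condition (3) of $\mathcal{X}_\varphi$ is available.
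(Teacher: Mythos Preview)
Your argument is correct and reaches the same endpoint as the paper---the four-factor intrinsic-derivative chain showing $\nabla f\cdot e^w\neq 0$ at a point of $P(a)$---but by a genuinely different route. The paper uses Proposition~\ref{prop:150223.1} as a black box: Baire's theorem yields a \emph{single} index $j_0$ and an open set $\mathcal{O}\subset\Sigma_{1/3}(a^c)$ on which $g\circ\sigma^{j_0}\circ H_a$ lands in $L$, and since $\mathcal{O}$ need not contain $\theta_p$, the paper then invokes transitivity of $\Lambda_{1/3}(a^c)$ to build an auxiliary $IC^{1+\alpha}$ map $H^u_{p,z_0}=H^u_{\varphi^m(z),z_0}\circ\varphi^m\circ H^u_{p,z}$ carrying $K_p(\varphi)$ into $\Phi^{-1}(\mathcal{O})$, and performs the derivative computation at the transported point $z_0$. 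You instead reopen the proof of Proposition~\ref{prop:150223.1} to extract the stronger conclusion that $\Theta(\theta)=\max_{j\in J_0}\Theta_j(\theta)\in L(\Lambda,f)$ for \emph{every} $\theta\in\Sigma_{1/3}(a^c)\cap C_0(d)$; this lets you work directly at $\theta_p$ and dispense with the transitivity transport entirely, at the cost of a finite argmax-decomposition and a Hausdorff-dimension pigeonhole to select the working $j^*$. Your approach is shorter and avoids one layer of holonomy composition; the paper's keeps Proposition~\ref{prop:150223.1} modular. One small point worth making explicit: the neighborhood $U_w$ on which $\Theta_{j^*}\circ\pi_w^{-1}$ is bi-Lipschitz should be chosen simultaneously for all $j^*\in J_{\theta_p}$ (a finite intersection) \emph{before} the decomposition step, so that the bi-Lipschitz estimate is available for whichever $j^*$ the pigeonhole eventually selects.
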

\begin{proof}
    Define $g:= f\circ \Phi^{-1}$ and observe that by condition 2 for $f$, 
    \begin{align*}
        \sup_{\Sigma\backslash C_{-n}(a)}\, g < \inf_{C_{-n}(a)}\, g.
    \end{align*}
    Then, applying Proposition \ref{prop:150223.1}, there exists $j_0\in \N$ and an open subset $\mathcal{O}\subset \Sigma_{1/3}(a^c)$ such that,
    \begin{align}\label{eq:030323.1}
        f\circ \varphi^{j_0} \circ \Phi^{-1}\circ H_a(\mathcal{O})
        = g\circ \sigma^{j_0}\circ H_a(\mathcal{O})
        \subset L(\sigma,\, g)
        = L(\Lambda,f).
    \end{align}
    Moreover, for every $\theta\in \mathcal{O}$, we have that $\sigma^{j_0}\circ H_a(\theta) \in C_{-n}(a)$. Set,
    \begin{align*}
        \hat{\mathcal{O}}:= \Phi^{-1}(\mathcal{O})
        \quad
        \text{and}
        \quad
        \hat{H} := \Phi^{-1}\circ H_a \circ \Phi:\Lambda\to \Lambda.
    \end{align*}
  The subhorseshoe $\Lambda_{1/3}(a^c)$ of $\Lambda$ contains $\hat{\mathcal{O}}$ as an open subset and, by equation \eqref{eq:030323.1},
    \begin{align}\label{eq:030323.2}
        f\circ \varphi^{j_0}\circ \hat{H}(\hat{\mathcal{O}}) \subset L(\Lambda,\, f).
    \end{align}
    
    Take $x_0\in \hat{\mathcal{O}}$ and $\delta>0$. By transitivity of $\Lambda_{1/3}(a^c)$ we can find $y\in \Lambda_{1/3}(a^c)$ such that $d(y,p)<\delta$ and $d(\varphi^m(y), x_0)<\delta$, for some $m\in \N$. We can assume that $\delta$ is small enough such that
    \begin{enumerate}[label=(\roman*)]
        \item The brackets $z := [y,\, p]$, $z_0 = [x_0, \, \varphi^m(z)]$ are well defined;
         \item $W^s_{\delta}(z_0)\cap \Lambda_{1/3}(a^c) \subset \hat{\mathcal{O}}$;
        \item The unstable holonomies $H^u_{p,z}: W^s_{\delta}(p)\cap \Lambda_{1/3}(a^c)\to W^s_{\delta}(z)\cap \Lambda_{1/3}(a^c)$ and $H^u_{\varphi^m(z), z_0}: W^s_{\delta}(\varphi^m(z))\cap \Lambda_{1/3}(a^c)\to W^s_{\delta}(z_0)\cap \Lambda_{1/3}(a^c)$ are well defined.
    \end{enumerate}
    Consider the map $H^u_{p,z_0} =  H^u_{\varphi^m(z),z_0}\circ \varphi^m\circ H^u_{p,z}:W^s_{\delta}(p)\cap\Lambda_{1/3}(a^c)\to W^s_{\delta}(z_0)\cap\Lambda_{1/3}(a^c)$ and notice that by item \ref{prop:horseshoeProperties-item3} of Theorem~\ref{prop:niceHorseshoeProperties} this defines an $IC^{1+\alpha}$ map. Moreover, if we consider the regular Cantor set $K_{z_0} := H^u_{p,z_0}(K_p(\varphi)) \subset \hat{\mathcal{O}}$, then
    \begin{align*}
        IT_{z_0} K_{z_0} = \Delta H^u_{p,z_0}(p,p) \cdot IT_p K_p(\varphi) = \Delta H^u_{p,z_0}(p,p)\cdot E^w(p) = E^w(z_0).
    \end{align*}

    Using Proposition \ref{prop:boxMapHolonomy} and equation \ref{eq:090323.3} we see that there exists $k_0\in \N$ such that the map $\hat{H}$, restricted to $W^s_{\delta}(z_0)\cap \Lambda_{1/3}(a^c)$, can be written as
    \begin{align}\label{eq:030323.3}
        \hat{H} = \varphi^k\circ H^u_{z_0, \varphi^{-k}(\hat{H}(z_0))}: W^s_{\delta}(z_0)\cap \Lambda_{1/3}(a^c)\to W^s_{\delta_1}(\hat H(z_0))\cap \Lambda,
    \end{align}
    for some $\delta_1\in (0,\delta)$ and so, using again item \ref{prop:horseshoeProperties-item3} of Theorem \ref{prop:niceHorseshoeProperties}, we see that $\hat{H}$ is $IC^{1+\alpha}$. Hence, computing intrinsic derivatives we have,
    \begin{align*}
        \Delta (f\circ \varphi^{j_0}\circ \hat{H})(z_0,z_0)\cdot E^w(z_0)
        &= \nabla f((\varphi^{j_0}\circ \hat{H})(z_0))\cdot \Delta(\varphi^{j_0}\circ \hat H)(z_0,z_0)\cdot E^w(z_0)\\
        &= \nabla f((\varphi^{j_0}\circ \hat{H})(z_0))\cdot E^w(\varphi^{j_0}(\hat H(z_0)))\neq 0,
    \end{align*}
    where we used that $\varphi^{j_0}\circ \hat{H}(z_0)\in P(a)$ and condition 3 for $f$. Therefore, there is a small neighborhood of $z_0$ in $K_{z_0}$ such that the real function $f\circ\varphi^{j_0}\circ \hat{H}$ restricted to this neighborhood is a Lipschitz invertible function with Lipschitz inverse. Now, using equation \eqref{eq:030323.2} and the fact that $K_{z_0}$ has positive Hausdorff dimension (it is a regular Cantor set) we conclude that
    \begin{align*}
        \dim_H\, L(\Lambda, f) \geq \dim_H\left(
            f\circ \varphi^{j_0}\circ \hat H(K_{z_0})
        \right) = \dim_H(K_p(\varphi))> 0.
    \end{align*}
    This finishes the proof of the proposition.
\end{proof}

\begin{remark}
\label{remark:dimension2}
\normalfont
    In the case that $M$ is a compact surface, we can slight adapt the construction obtaining the same result as in Proposition \ref{prop:lowerBoundHDLagrange}. Indeed, it enough to observe that in this case, the stable Cantor set around $p$, $K_p(\varphi) := W^s_{\delta}(p)\cap \Lambda_{1/3}(a^c)$, is a Regular Cantor set, the holonomy maps are known to be differentiable and the map $\varphi\mapsto \dim_H(K_p(\varphi))$ varies continuously (see \cite{PaTa1995}). So, taking $E^w = E^s$ and repeating the arguments as before we obtain the analogous, in dimension two, of Proposition \ref{prop:lowerBoundHDLagrange}. Therefore, in what follows, we make use of this proposition without any restriction on the dimension. It is important to register that the result in dimension two was also obtained in \cite{IBMO2017}.
\end{remark}
\section{Proof of the results}
\label{section:proofResults}
Now we give proofs for the main results.

\subsection*{Proof of Theorem \ref{thm:Main}}
Consider now $\varphi\in \diff^{\infty}(M)$, admitting a point $q_0$ of transversal homoclinic intersection associated with a $\varphi$-periodic point $p\in M$. As before we assume without loss of generality that $p$ is fixed.

By Proposition \ref{prop:genericityConditionsC1-C4}, there exist $\varphi'\in \diff^{\infty}(M)$ $C^{\infty}$-close to $\varphi$ and a $C^1$-open neighborhood, $\mathcal{U}(\varphi')\subset \diff^{\infty}(M)$, of $\varphi'$ such that every $\tilde{\varphi}\in \mathcal{U}(\varphi')$ satisfies conditions C1-C4. Take $\tilde{\varphi}\in \mathcal{U}(\varphi')$. Then, by Proposition \ref{prop:genericityPotentials}, the set $\mathcal{X}_{\tilde{\varphi}}$ is open and dense in $C^1(M;\, \R)$. Therefore, for any $f\in \mathcal{X}_{\tilde{\varphi}}$, we may use Proposition \ref{prop:lowerBoundHDLagrange} and conclude that
\begin{align*}
    \dim_H(M(\tilde{\varphi},\, f)) \geq \dim_H(L(\tilde{\varphi},\, f)) \geq \dim_H(K_{\tilde{p}}(\tilde{\varphi})) > 0,
\end{align*}
where $\tilde{p}\in M$ is the hyperbolic continuation of $p$ for $\tilde{\varphi}$ and $K_{\tilde{p}}(\tilde{\varphi})\subset M$ is a regular Cantor set. This finishes the proof of Theorem \ref{thm:Main} (notice that the lower bound depends continuously on $\tilde{\varphi}\in \mathcal{U}(\varphi')$, see \cite{PaTa1995}).

\begin{remark}
\normalfont
    In the case that $M$ is a surface, it is enough to consider $\varphi\in \diff^2(M)$ in the above argument. The assumption that the diffeomorphism is $C^{\infty}$ in the higher dimensional case comes from the technical use of linearization to build the good horseshoe (see Subsection \ref{subsec:choiceHorseshoe}). Such maneuver is not necessary in dimension two, once any horseshoe $\Lambda$ for $\varphi$ has $C^{1+\gamma}$ holonomy maps and the stable Cantor set at any point is a regular Cantor set. So, the above argument provides a $C^1$-open neighborhood $\mathcal{U}(\varphi)\subset \diff^2(M)$ such that for any $\tilde{\varphi}\in \mathcal{U}(\varphi)$ we have (see Remark \ref{remark:dimension2}),
    \begin{align*}
        \dim_H(M(\tilde{\varphi},\, f)) \geq \dim_H(L(\tilde{\varphi},\, f)) \geq \dim_H(K_{\tilde{p}}(\tilde{\varphi})) > 0.
    \end{align*}
\end{remark}

\subsection*{Proof of Theorem \ref{thm:bigMarkov}}
Let $\varphi_0$ be a Morse-Smale diffeomorphism in ${\rm{Diff}}^\infty(M)$, and let $S_0$ be a fixed point sink for $\varphi_0$. Take $V$ an open neighborhood of $S_0$ such that $U:=\overline{V}$ is a compact subset of $W^s_{\text{loc}}(S_0)$ and $d(\varphi_0(U), V^c)), d(U, {\varphi_0^{-1}(V)}^c) > 2\varepsilon_1$, for some $\varepsilon_1>0$.

 Given a point $x_0$  in $V\setminus \varphi_0(U)$, consider an open neighborhood $W$ of $x_0$ such that $W \subset U\setminus \varphi_0(U)$ and $d(\overline{W}, V^c)), d(\overline{W}, {\varphi_0(U)}) > 2\varepsilon_2$, for some $\varepsilon_2>0$. Let $f_0: M \to \R$ be a  function,  satisfying:
$$f_0(x_0)>0, \; {\nabla f_0(x_0)}\neq 0 \; \text{ and } \; {\rm supp}(f_0)\subset W.$$

Since the set of Morse-Smale diffeomorphism is structurally stable, we can consider $\mathcal{V} \subset \diff^\infty(M)$ as a neighborhood of $\varphi_0$, satisfying that for every $\varphi \in \mathcal{V}$:
\begin{enumerate}[label=(\roman*)]
    \item $\varphi$ is a Morse-Smale diffeomorphism; 
    \item $d(\varphi(U), V^c)), d(U, {\varphi^{-1}(V)}^c) > \varepsilon_1 $;
     \item \label{final}$d(\overline{W}, {\varphi(U)}) > \varepsilon_2$.
\end{enumerate}

Let $\mathcal{W} \subset C^1(M; \R)$ be a neighborhood of $f_0$, such that for every $f \in \mathcal{W}$, we have:
\begin{equation} \label{msce1}
f(x_0)>f_0(x_0)/2, \;   \;{\nabla f(x_0)}\neq 0 {\rm \; and \; } |f(x)|<f_0(x_0)/4, {\rm \; for \; every \;} x \notin W.     
\end{equation}

Consider the set $\mathcal{U}:=\mathcal{V}\times \mathcal{W}$. Note that for every $(\varphi, f) \in \mathcal{U}$, $x \in W$ and $n$ a positive integer, by \ref{final},  we have $\varphi^n(x) \in \varphi(U)$ and $\varphi^{-n}(x) \notin U$. Since $\varphi(U)\cap W = \emptyset$ and $W \cap V^c=\emptyset$, using (\ref{msce1}), we get that $|f(\varphi^n(x))|,|f(\varphi^{-n}(x))|<f_0(x_0)/4$. Thus, for a small neighborhood $O$ of $x_0$, where $f{|_O}$ is bigger than $f_0(x_0)/4$, we have $f(O)\subset M(\varphi, f)$. Since $\nabla f(x_0) \neq 0$, the inverse function theorem guarantees that $f({O})$ contains an interval.

Therefore, for every $(\varphi,\, f) \in \mathcal{U}$, we have that  $M(\varphi,\, f)$ has non-empty interior but the Lagrange spectrum $L(\varphi,\, f) = f(Per(f))$ is finite.

\subsection*{Proof of Theorem \ref{thm:dichotomy}}
Assume first that $\dim M \geq 2$. Denote by $\mathcal{M}\mathcal{S}(M)$ the set of Morse-Smale diffeomorphisms in $\diff^{\infty}(M)$ and by $\mathcal{H}(M)$ the subset of $C^{\infty}$ diffeomorphisms which presents a transverse homoclinic intersection. Notice that $\mathcal{M}\mathcal{S}(M)$ and $\mathcal{H}(M)$ are $C^1$-open, disjoint subsets of $\diff^{\infty}(M)$ and the union of this sets, by \cite{Cr2010}, is $C^1$-dense in $\diff^{\infty}(M)$.

Let $\mathcal{C}(M)$ be the $C^1$-open and $C^{\infty}$-dense subset of $\mathcal{H}(M)$ given by Proposition \ref{prop:genericityConditionsC1-C4}. Define
\begin{align*}
    \mathcal{D}(M) := \left\{
        (\varphi,\, f)\in \diff^{\infty}(M)\times C^1(M;\, \R)\colon\,
        \varphi\in \mathcal{C}(M),\,
        f\in \mathcal{X}_{\varphi}
    \right\},
\end{align*}
and consider the data set
\begin{align*}
    \mathcal{R}(M) := \mathcal{D}(M)\cup (\mathcal{M}\mathcal{S}(M)\times C^1(M;\, \R)).
\end{align*}
Take $(\varphi, f) \in \mathcal{R}(M)$. If $(\varphi,\, f)\in \mathcal{M}\mathcal{S}(M)\times C^1(M;\, \R)$, then $L(\varphi,\, f)$ coincides with $f(\per(\varphi))$ which is finite by definition of Morse-Smale diffeomorphisms. If otherwise $(\varphi,\, f)\in \mathcal{D}(M)$, then by the proof of Theorem \ref{thm:Main}, we see that $L(\varphi,\, f)$ has positive Hausdorff dimension and since $\varphi \in \mathcal{C}(M)$ has an associated horseshoe, we have $h_{\text{top}}(\varphi)>0$. 

We claim that $\mathcal{R}(M)$ is $C^1$-open and $C^1$-dense in $\diff^{\infty}(M)$. First, we deal with the density. Take $(\varphi_0,f_0)\in \diff^{\infty}(M)\times C^1(M;\, \R)$. As mentioned before, we can find $\varphi_1\in \mathcal{M}\mathcal{S}(M)\cup \mathcal{H}(M)$, which is $C^1$-close to $\varphi_0$ (see \cite{Cr2010}). If $\varphi_1\in \mathcal{M}\mathcal{S}(M)$, then $(\varphi_1,\,f_0) \in \mathcal{R}(M)$. If otherwise $\varphi_1\in \mathcal{H}(M)$, then there exists $\varphi_2 \in \mathcal{C}(M)$ which is $C^{\infty}$-close to $\varphi_1$. By Proposition \ref{prop:genericityPotentials}, we can find $f_1\in \mathcal{X}_{\varphi_2}$ which is $C^1$-close to $f_0$. Then, the pair $(\varphi_2,\, f_1)\in \mathcal{D}(M)$ is $C^1$-close to $(\varphi_0,\, f_0)$ and this finishes the proof of the density.

Now we address the openness of $\mathcal{R}(M)$. By the fact that $\mathcal{M}\mathcal{S}(M)\times C^1(M;\, \R)$ is open it is enough to prove that $\mathcal{D}(M)$ is $C^1$-open. For that, take $(\varphi_0,\, f_0)\in \mathcal{D}(M)$.

Let $\delta_0>0$ such that if $\varphi\in \diff^{\infty}(M)$ and $\norm{\varphi - \varphi_0}_{C^1}<\delta_0$, then the construction in the Subsection \ref{subsec:choiceHorseshoe} provides a horseshoe $\Lambda_{\varphi}$ for $\varphi$. Decreasing $\delta_0$ if necessary, there exists a continuous map $\varphi\mapsto \Phi_{\varphi}$, where $\Phi_{\varphi}:\Sigma\to M$ is a conjugacy between the subshift of finite type $(\sigma,\, \Sigma)$ and the horseshoe $(\varphi,\, \Lambda_{\varphi})$. Moreover (See \cite[Theorem~8.3]{Sh2013}), there exists $C_0 = C_0(\delta_0)>0$ such that
\begin{align}\label{eq:070322.1}
    \sup_{\theta\in \Sigma}\, d(\Phi_{\varphi}(\theta), \Phi_{\varphi_0}(\theta)) \leq C_0\, \norm{\varphi - \varphi_0}_{C^1}.
\end{align}

Let $U$ be a small compact neighborhood of $\Lambda_{\varphi_0}$ and assume that $\delta_0$ is small enough such that $\Lambda_{\varphi}\subset U$. We take $U$ small such that for every $\varphi$, with $\norm{\varphi - \varphi_0}_{C^1}\leq \delta_0$, the map which associate for each $x\in \Lambda_{\varphi}$ the weak stable direction $E^w_{\varphi}(x)$ can be extended to $U$ satisfying that for every $x, y \in U$,
\begin{enumerate}
    \item There exists $C_1>0$ such that
    \begin{align}\label{eq:070322.2}
        d(E^w_{\varphi}(x),\, E^w_{\varphi_0}(x))
        < C_1\, \norm{\varphi - \varphi_0}_{C^1};
    \end{align}
    
    \item There exist uniform constants (independent of $\varphi$), $M>0$ and $\gamma_0\in (0,1)$ such that
    \begin{align}\label{eq:070322.3}
        d(E^w_{\varphi}(x),\, E^w_{\varphi}(y))
        \leq M\, d(x,\,y)^{\gamma_0}.
    \end{align}
\end{enumerate}
The first item can be found in \cite[Corollary~2.8]{CrPo2015}. For the second item, see \cite[Corollary~2.1]{BrPe1974} and \cite[Theorem 4.11]{CrPo2015}.

Since $f_0\in \mathcal{X}_{\varphi_0}$, we can find $\varepsilon>0$, $n\in \N$ and a finite $\Lambda_{\varphi_0}$-admissible word $a\in \mathbb{A}^{2n+1}$ such that
\begin{align}\label{eq:070322.4}
    \sup_{\Lambda_{\varphi_0}\backslash \Phi_{\varphi_0}(C_{-n}(a))}\, f_0 + \varepsilon 
    < \inf_{\Phi_{\varphi_0}(C_{-n}(a))}\, f_0 - \varepsilon
    \quad
    \text{and}
    \quad
    \inf_{x\in \Phi_{\varphi_0}(C_{-n}(a))}\,|\nabla f_0(x)\cdot e^w_{\varphi_0}(x)| \geq \varepsilon,
\end{align}
where $e^w_{\varphi_0}(x)$ denotes a unitary direction inside of $E^w_{\varphi_0}(x)$. Take $L :=\displaystyle \sup_{\norm{f - f_0}_{C^1}\leq \delta_0}\, \norm{\nabla f}_{\infty}$ and consider $\delta_1\in (0,\delta_0)$ such that $d(x,\, y)<\delta_1$ implies that
\begin{align}\label{eq:070322.5}
    \norm{\nabla f_0(x) - \nabla f_0(y)} < \varepsilon/8,
\end{align}
and consider 
\begin{align*}
    \delta  = \min\left\{
        \delta_1,\,
        \left(
            \frac{\varepsilon}{8ML}
        \right)^{1/\gamma_0}\, 1/C_0,\,
        \varepsilon/8,\,
        \frac{\varepsilon}{8LC_1},\,
        \frac{\varepsilon}{1 + L\, C_0}, \dfrac{\delta_1}{C_0}
    \right\}.
\end{align*}

Now consider $(\varphi,\, f)\in \diff^{\infty}(M)\times C^1(M;\, \R)$ such that
\begin{align}\label{eq:070322.6}
    \norm{\varphi - \varphi_0}_{C^1} <\delta
    \quad
    \text{and}
    \quad
    \norm{f - f_0}_{C^1} < \delta.
\end{align}
Then, using inequalities \eqref{eq:070322.1} and \eqref{eq:070322.6}, for every $\theta\in \Sigma$,
\begin{align}\label{eq:070322.7}
    \left|
        f(\Phi_{\varphi}(\theta)) - f_0(\Phi_{\varphi_0}(\theta))
    \right|
    &\leq \left|
        f(\Phi_{\varphi}(\theta)) - f(\Phi_{\varphi_0}(\theta))
    \right| + \left|
        f(\Phi_{\varphi_0}(\theta)) - f_0(\Phi_{\varphi_0}(\theta))
    \right|\nonumber\\
    &\leq \norm{\nabla f}\, d(\Phi_{\varphi}(\theta),\, \Phi_{\varphi_0}(\theta))
    + \norm{f - f_0}_{C^1}\\
    &\leq L\, C_0\, \delta + \delta
    \leq \varepsilon\nonumber
\end{align}
Furthermore, if we denote by $x := \Phi_{\varphi}(\theta)$ and $x_0 := \Phi_{\varphi_0}(\theta)$, then using triangular inequality and inequalities \eqref{eq:070322.1}, \eqref{eq:070322.2}, \eqref{eq:070322.3}, \eqref{eq:070322.6} and the choice of $\delta$ we have
\begin{align}\label{eq:070322.8}
    \left|
        \nabla f(x)\cdot e^w_{\varphi}(x)
    \right.
    &
    \left.
        - \nabla f_0(x_0)\cdot e^w_{\varphi_0}(x_0)
    \right|
    \leq \norm{\nabla f}_{\infty}\, d(e^w_{\varphi}(x),\, e^w_{\varphi}(x_0))
    + \norm{\nabla f(x) - \nabla f_0 (x)}\nonumber\\
    &+ \norm{\nabla f_0(x) - \nabla f_0(x_0)}
    + \norm{\nabla f_0}_{\infty}\, d(e^w_{\varphi}(x_0),\, e^w_{\varphi_0}(x_0))\nonumber\\
    &\leq L\, M\, d(x,\, x_0)^{\gamma_0}
    + \delta 
    + \varepsilon/8
    + L\, C_1\, \norm{\varphi-\varphi_0}_{C^1}\\
    &\leq L\, M\, (C_0\, \delta)^{\gamma_0} + \varepsilon/4 + L\, C_1\, \delta
    \leq \varepsilon/2\nonumber.
\end{align}

Now we are in the position to check the requirements needed to guarantee that $f\in \mathcal{X}_{\varphi}$. Indeed, using inequality \eqref{eq:070322.7}, we see that if $\theta\notin C_{-n}(a)$, then
\begin{align*}
    f(\Phi_{\varphi}(\theta))
    \leq f_0(\Phi_{\varphi_0}(\theta)) + \varepsilon
    \leq \sup_{\Lambda_{\varphi_0}\backslash \Phi_{\varphi_0}(C_{-n}(a))}\, f_0 + \varepsilon,
\end{align*}
and if $\theta\in C_{-n}(a)$,
\begin{align*}
    f(\Phi_{\varphi}(\theta))
    \geq f_0(\Phi_{\varphi_0}(\theta)) - \varepsilon
    \geq \inf_{\Phi_{\varphi_0}(C_{-n}(a))}\, f_0 - \varepsilon
\end{align*}
Then, by \eqref{eq:070322.4}, we have
\begin{align*}
    \sup_{\Lambda_{\varphi}\backslash \Phi_{\varphi}(C_{-n}(a))}\, f
    \leq \sup_{\Lambda_{\varphi_0}\backslash \Phi_{\varphi_0}(C_{-n}(a))}\, f_0 + \varepsilon
        < \inf_{\Phi_{\varphi_0}(C_{-n}(a))}\, f_0 - \varepsilon
    \leq \inf_{\Phi_{\varphi}(C_{-n}(a))}\, f.
\end{align*}
The second condition in the definition of $\mathcal{X}_{\varphi}$ can be checked similarly as follows: by \eqref{eq:070322.4} and \eqref{eq:070322.8}, for any $\theta\in C_{-n}(a)$,
\begin{align*}
    \left|
        \nabla f(\Phi_{\varphi}(\theta))\cdot e^w_{\varphi}(\Phi_{\varphi}(\theta))
    \right| \geq \left|
        \nabla f_0(\Phi_{\varphi_0}(\theta))\cdot e^w_{\varphi_0}(\Phi_{\varphi_0}(\theta))
    \right| - \varepsilon/2 \geq \varepsilon/2 > 0.
\end{align*}
Therefore $f\in \mathcal{X}_{\varphi}$ and so this shows that any pair $(\varphi, f)$ satisfying \eqref{eq:070322.6} belongs to $\mathcal{D}(M)$ proving that $\mathcal{D}(M)$ is $C_1$-open.

It remains to analyze the case where $\dim M = 1$. But, for circle diffeomorphisms it is known that Morse-Smale forms an open and dense class inside of $\diff^1_+(M)$ (see \cite[Section 1.15]{De2018}). Therefore, for any $f\in C^0(M;\, \R)$ we have that the Lagrange spectrum $L(\varphi,\, f) = f(Per(f))$ and it is finite. The concludes the proof of Theorem \ref{thm:dichotomy}.

\begin{remark}
    In dimension $2$
\end{remark}

\bibliographystyle{abbrv}
\bibliography{bib.bib}

\information

\end{document}